 \newcommand{\D}{\displaystyle}
\newtheorem{Theorem}{Theorem}[section]
\newtheorem{Lemma}[Theorem]{Lemma}
\newtheorem{Proposition}[Theorem]{Proposition}
\newtheorem{Corollary}[Theorem]{Corollary}
\newtheorem{Definition}[Theorem]{Definition}
\numberwithin{equation}{section} \numberwithin{figure}{section}
\def\rr{\mathbb{R}}
\def\D{\displaystyle}
\begin{document}

\title{Population dynamics in river networks}
\author{Yu Jin\footnote{
Department of Mathematics,
University of Nebraska-Lincoln,
Lincoln, NE, 68588, USA. Email: yjin6@unl.edu}, \,\,
Rui Peng\footnote{Department of Mathematics, Jiangsu Normal University, Xuzhou, 221116, Jiangsu Province, China.
Email: pengrui\_seu@163.com},\,\,
Junping Shi\footnote{Department of Mathematics, College of William and Mary,
Williamsburg, VA 23187-8795, USA. Email: jxshix@wm.edu
}
}
\date{}
\maketitle

\begin{abstract}
Natural rivers connect to each other to form networks. The geometric structure of a river network can significantly influence spatial dynamics of populations in the system. We consider a process-oriented model to describe population dynamics in river networks of trees, establish the fundamental theories of the corresponding parabolic problems and elliptic problems, derive the persistence threshold by using the principal eigenvalue of the  eigenvalue problem, and define the net reproductive rate to describe population persistence or extinction. By virtue of numerical simulations, we investigate the effects of hydrological, physical, and biological factors, especially the structure of the river network, on population persistence.\\
{\bf Keywords:} River network, population persistence, eigenvalue problems, net reproductive rate
\end{abstract}

\section{Introduction}

River and stream ecosystems is a key component of the global environmental ecosystems, and it  has gained increasing attention of ecologists and environmental scientists. The organisms living in river systems are subjected to the biased flow in the downstream direction. Instream flow needs (IFNs) and ``drift paradox" are two important problems for stream ecologists and river managers. The former asks how much stream flow can be changed while still maintaining an intact
stream ecology \cite{Anderson2006,McKenzieetal2012}, and the latter asks how stream-dwelling organisms can persist
without being washed out when continuously subjected to a unidirectional water flow \cite{Muller1954,Muller1982,PLNL,Speirs2001}. The problems are challenging due to the complex and dynamic nature of interactions between the stream environment and the biological community. The study of population models in rivers or streams reveals the dependence of spatial population dynamics on environmental and biological factors in rivers or streams,  hence it has become an important explanation of the Instream flow needs and ``drift paradox"; see e.g.,
 \cite{AndersonNisbet2006,AndersonNisbet2005,DeAngelis1995,Hilkerlewis2010,lutscherlewis2006,
Lutscher2007,Lutscher2009,Lutscherseo,McKenzieetal2012,PLNL,Speirs2001,
VLutscher2010,Vasilyeva20122,Jinlewisjmb,Jin2011,lutscher2008,Lutscher2010,Lutcherlewis2005,SeoLutscher2011,JJL}.

In mathematical models, rivers and streams have been traditionally treated as  a finite or infinite length one-dimensional interval on the real line (see e.g., \cite{Speirs2001,McKenzieetal2012,Jinlewisjmb,Jin2011,Lutcherlewis2005}).
When homogeneous river intervals (see e.g., \cite{Speirs2001,Hilkerlewis2010}) are recognized as oversimplification of real river systems, more realistic rivers have been approximated by alternating good and bad patches or pool-and-riffle channels
(see e.g, \cite{lutscherlewis2006,JinLewisBMB}), drift and benthic (or storage)  zones (see e.g., \cite{Huangetal2016,PLNL,Jinwangfb}), or meandering rivers consisting of a main channel and point bars (see e.g., \cite{JinLutscher,JinLewisBMB}). These later generalizations are still in one-dimensional space or considered as one channel.

Nevertheless, natural river systems are often in a spatial network structure such as dendritic trees. The network topology (i.e., the topological structure of a network), together with other physical and hydrological features in a river network,
can greatly influence the spatial distribution of the flow profile (including the flow velocity, water depth, etc.) in branches of the network.  Moreover, the population dispersal vectors may be constrained by the network configuration and the flow profile, and species life history traits may depend on varying habitat conditions in the network. As a result, population distribution and persistence in river systems can be significantly affected by the network topology or structure, see e.g.,
\cite{Cuddington2002,Fagan2002,Fausch2002,Goldberg2010,Grant2007,Grant2010,Peterson2013}.
Then there arise interesting questions such as whether a population can persist in the desert streams of the southwestern United State while the streams are experiencing substantial natural drying trends \cite{Fagan2002}, or whether dendritic geometry enhances dynamic stability of ecological systems \cite{Grant2007} etc. Furthermore, other related dynamics in the network, such as the dynamics of water-born infectious diseases like cholera may also be greatly affected by the river network geometry (see e.g., \cite{Bertuzzo2010}).

Branches in a river network have been modeled as point nodes in a network of habitats  in individual based models \cite{Fagan2002,Grant2010} and matrix population models for stage-structured populations \cite{Goldberg2010,Mari2014}.
However this oversimplifies the spatial heterogeneity of river networks. In a real river ecosystem, organisms mainly live in the branches of the network and the connections between branches (e.g., the network nodes) are mainly for population transitions from one branch to another. To take into account this realistic situation, in recent works \cite{Ramirez2012,Sarhad2015,Sarhad2014},  integro-differential equations and reaction-diffusion-advection equations were used to model population dynamics in  river networks where the network branches, instead of the network nodes, are the main habitat for organisms. Here the river networks are modeled under the framework of metric graphs (or metric networks). A metric graph is a graph $G=(V,E)$ with a set $V$ of vertices and a set $E$ of edges, such that each edge $e\in E$ is associated with either a closed bounded interval. Mathematic notion of metric graphs was first introduced in the context of wave propagation on thin graph-like domains \cite{BK2013,k2005}, and they are also called quantum graphs.

The theories of parabolic and elliptic equations as well as the corresponding eigenvalue problems on metric networks are important in establishing population dynamics of biological species in river networks. The existence and uniqueness of solutions of linear parabolic equations and nonlinear parabolic equations have been established in \cite{vonbelow1988} and \cite{vonbelow1994}, respectively. A maximum principle for semilinear parabolic network equations was obtained in \cite{vonbelow1991}, and the eigenvalue problems associated with parabolic equations in networks were studied in \cite{vonbeloweigen,vonbelow2012}. Stability of steady states of parabolic equations were studied in \cite{vonbelow2015ss,Yanagida}. More studies of diffusion equations in networks can be found in e.g., \cite{WArendt,vonbelow1996par,Lumer1979,Lumer1980}. In these work, the model parameters are allowed to be time and/or space dependent; the so-called Kirchhoff laws or an excitatoric Kirchhoff condition (or dynamical node condition) are assumed at the interior connecting points.

The goal of this paper is to establish a mathematically rigorous foundation of reaction-diffusion-advection equations defined on a metric tree network, which models population dynamics of a biological species on a river network. The population model consists of reaction-diffusion-advection equations describing population dynamics on network branches and zero population flux at interior connecting points in the network, allowing variations of diffusion rates, advection rates, and growth rates throughout the network.  We will rigorously derive the theories for the time-dependent parabolic equations, the corresponding elliptic equations for the steady states, and the associated eigenvalue problems for linearized equations, to establish population persistence conditions in terms of the principal eigenvalue and/or the net reproductive rate of the system. We will also prove the existence and uniqueness of a stable positive steady state when population persists under the logistic type growth rate. The theory of infinite-dimensional dynamical systems and existing theories of parabolic and elliptic equations as well as eigenvalue problems on a real line and on  metric networks will be applied. We will also study how different factors influence population dynamics, especially persistence and the distribution of the stable positive steady state (if exists) in the whole network.

The population persistence in a spatial population model has been described by uniform persistence, the (in)stability of the
trivial extinction solution, and  the critical domain size (minimal length of the habitat such that a species can persist); see e.g.,  \cite{Jin2011,Huangetal2016,LLL2015JBD,LLL2016SIAM,LL2014JMB,Lutcherlewis2005,McKenzieetal2012,PLNL}. For a single species population in one-dimensional rivers, the persistence theory was established in a homogeneous environment in \cite{Speirs2001,VLutscher2010,Lutcherlewis2005,lutscher2008}, in temporally periodically varying environments in \cite{Jin2011},  in temporally randomly varying environments in \cite{JJL}, and in spatially heterogeneous environment in \cite{Lutscher2010}. For a benthic-drift population consisting of individuals drifting in water and individuals staying on the benthos, the critical domain size was studied in a spatially homogeneous river in \cite{PLNL} and in a river with alternating good and bad channels in \cite{lutscherlewis2006}. In particular, persistence metrics (fundamental niche, source/sink metric, and the net reproductive rate) have been established  for a single stage population in \cite{McKenzieetal2012} and for a benthic-drift population in \cite{Huangetal2016}, respectively.  Population persistence for a single species in river networks has also been studied in \cite{Ramirez2012,Sarhad2015,Sarhad2014}.  Integro-differential equations were used to describe population dynamics in river networks in  \cite{Ramirez2012}, where the diffusion coefficients, advection rates and growth rates were assumed to be the same in all branches, and the population persistence was determined by the stability of the extinction state.  Reaction-diffusion-advection equations were used in \cite{Sarhad2015,Sarhad2014}, where again constant diffusion coefficients, advection rates, and growth rates were assumed throughout the network but zero-flux interior junction conditions were not assumed. The principal eigenvalue of the corresponding eigenvalue problem was used to determine the stability of the extinction state and also the population persistence. Most of the analyses and results about persistence conditions were restricted to radial trees, in which all branches on the same level are essentially the same habitats, and  hence population dynamics in such networks is essentially equivalent to that in a one-dimensional river.

This paper is organized as follows. In section \ref{modelsection}, we introduce the notion of river network of a general tree and the initial boundary value problem for population dynamics on the network.  In section \ref{eigensection}, we establish the existence of a principal eigenvalue of the corresponding eigenvalue problem and we show that it determines whether the population persists (the extinction solution is unstable) or becomes  extinct (the extinction solution is globally asymptotically stable). Moreover, we obtain the existence of a globally asymptotically stable positive steady state when the population persists. In section \ref{netR0section}, we define the next generation operator and the net reproductive rate $R_0$ of the population living in the river network, and we prove that $R_0=1$ can be used as a persistence threshold for the population. We also provide a method to calculate $R_0$. In section \ref{simulationsection}, by virtue of numerical simulations, we study the influences of hydrological, physical, and biological factors on the net reproductive rate as well as
the positive steady state. In Appendix \ref{appendixtheories}, we provide the derivation of the theories for the parabolic and elliptic problems on networks, including the maximal principle, the comparison principle, and the existence, uniqueness and estimations of the solutions.

\section{Model}\label{modelsection}

\subsection{The river network - a metric tree}

In this work, we assume that the river network is a finite metric tree, \textit{i.e.}, a connected finite metric graph
with no cycles, or equivalently, a finite metric graph on which any two vertices can be connected by a unique simple path.

We first introduce the mathematical definition of a river network  (a finite tree) and notations on it
(see e.g.,  \cite{vonbelow1994}). Let $G$ be a $C^\kappa$-network for $\kappa\geq 2$ with the set of vertices
 $$
 E=\{e_i: 1\leq i\leq N\},
 $$
the set of edges
 $$
 K=\{k_j:j\in I_{N-1}\},\ \ n\geq2,
 $$
and arc length parameterization $\pi_j\in C^\kappa([0,l_j],\mathbb{R}^2)$ on edge $k_j$,  where $N$ and $N-1$ are the numbers of vertices
and edges, respectively, and
$$
I_{N-1}=\{1,2,\cdots, N-1\}.
$$
The edge $k_j$ is isomorphic to the interval $[0,l_j]$ with length $l_j$ and spatial variable $x_j$ on it, where $x_j=0$ and $x_j=l_j$ represent the upstream end and the downstream end of $k_j$, respectively.  The topological graph $\Lambda=(E,K)$ embedded in $G$ is
assumed to be simple and connected. Thus, $\Lambda$ admits the following properties: each $k_j$ has its endpoints in $E$, any two vertices in $E$
can be connected by a unique simple path with arcs in $K$, and any two distinct edges $k_j$ and $ k_h $ intersect at no more than one point in $E$. See Figure \ref{riverexamplefig} for an example of a river network.

Endowed with the above graph topology and metric defined on each edge, $G$ is a connected and compact subset of $\rr^2$.
The orientation of $G$ is given by the incidence
matrix $(d_{ij})_{N\times (N-1)} $ with
 \begin{equation}
 \label{directiond}
 d_{ij}=\left\{
 \begin{array}{ll}
 1\, &\mbox{ if } \pi_j(l_j)=e_i \  \mbox{(i.e., $e_i$  is the downstream end of the edge $k_j$)},\\
 -1\, &\mbox{ if } \pi_j(0)=e_i \  \mbox{(i.e., $e_i$  is the upstream end of the edge $k_j$)},\\
 0\, &\mbox{ otherwise}  \  \mbox{(i.e., $e_i$  is not a vertex on the edge $k_j$)}.
 \end{array}
 \right.
 \end{equation}
We distinguish the set $E$ of vertices as follows:
 $$
 \begin{array}{l}
 E_r=\{e_i\in E: \gamma_i>1\} \mbox{ (ramification (or interior junction) vertices)}, \\
 E_b=\{e_i\in E: \gamma_i=1\}  \mbox{ (boundary vertices)},\\
 E_u=\{e_i\in E: \gamma_i=1, e_i \mbox{ is an upstream boundary vertex}\} ,\\
 E_d=\{e_i\in E: \gamma_i=1, e_i \mbox{ is a downstream boundary vertex}\},
 \end{array}
 $$
where $\gamma_i=\gamma(e_i)$ is the valency of $e_i$ that represents the number of edges that connect to $e_i$,  and $E_b=E_u\cup E_d$.

\begin{figure}[t!]
\centering
\includegraphics[height=3in, width=5.5in]{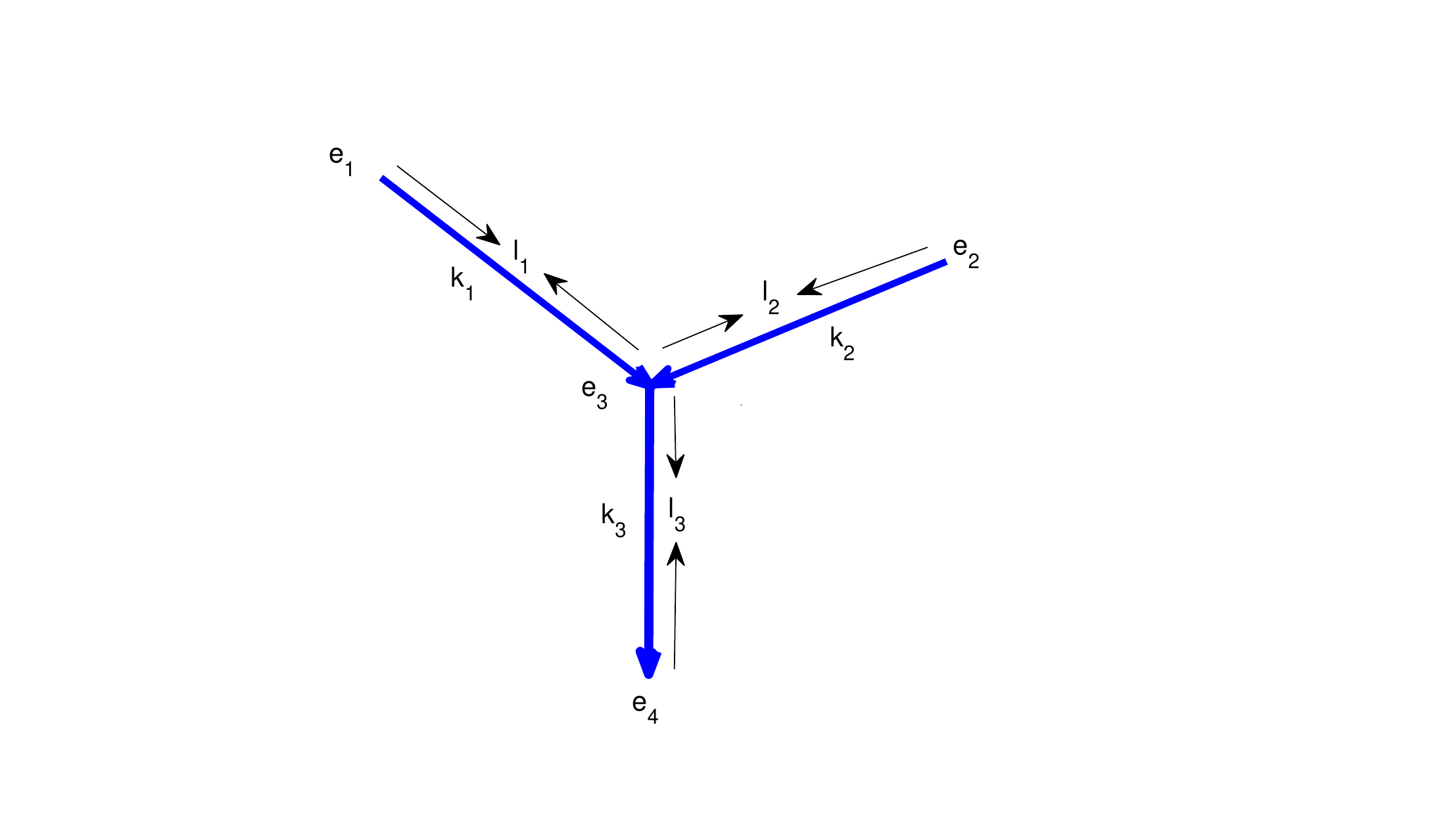}
\vspace{-1cm}

\caption{A river network. Each blue arrow represents a river branch with the specific water flow direction.
 } \label{riverexamplefig}
\end{figure}

 Let $t$ be the time variable and for $ T>0$, denote
 $$
 \begin{array}{l}
 \Omega=G\times [0,T],\quad
 \Omega_j=[0,l_j]\times [0,T],
 \end{array}
 $$
 $$
 \begin{array}{l}
 \Omega_p=(G\setminus E_b)\times (0,T],\quad
 \omega_p=(G\times\{0\})\cup (E_b\times(0,T]).
 \end{array}
 $$
For a function $u:\Omega\rightarrow \mathbb{R}$, we define $u_j=u\circ (\pi_j,id):\Omega_j\rightarrow \mathbb{R}$. Differentiation is carried out on each edge $k_j$ with respect to the arc length
parameter $x_j$.
A function is differentiable on $G$ means that it is differentiable at all points  $x\in G\setminus E$.  We use the following notations for functions and derivatives at a vertex
 $$
 u_j(e_i,t)=u_j(\pi_j^{-1}(e_i),t),\ \ u_{x_j}(e_i)=\frac{\partial }{\partial x_j}u_j(\pi_j^{-1}(e_i),t),\ \ u_{x_jx_j}(e_i)
 =\frac{\partial^2 }{\partial x_j^2}u_j(\pi_j^{-1}(e_i),t).
 $$
Any function $u$ on $\Omega$ satisfies $u_j(e_i,t)=u_h(e_i,t) \mbox{ if } k_j\cap k_h=\{e_i\}.$

We now introduce function spaces on $G$. Let
$$
C(G)=\{g: g_j\in C([0,l_j],\rr), \, j\in I_{N-1}\}
$$ with the norm:
 \[{\|g\|}_{C(G)} =\max\limits_{j\in I_{N-1}} \max\limits_{x\in[0,l_j]}|g_j(x)|.\]
The Banach space $C^m(G)$ consists of all functions that are $m$ times continuously differentiable over $G$  with norm given by
\[{\|g\|}_{C^m(G)}=\sum_{\beta=1}^m\|g^{(\beta)}\|_{C(G)}+{\|g\|}_{C(G)}, \]
where $g^{(\beta)}$ is the $\beta$-th derivative of $g$.
Similarly $L^p(G)$ is the Banach space of all real-valued functions defined on
$G$ that are measurable and $p$-summable with respect to
$G$ with $p\geq 1$. The norm in this space is defined by
\[\|g\|_{L^p(G)}=\sum_{j=1}^{N-1}\Big(\int _0^{l_j}|g_j|^p\Big)^{1/p}. \]

For $\alpha\in [0,1)$, define
 $$
 C^{2+\alpha,1+\frac{\alpha}{2}}(\Omega)=\{u\in C(\Omega):\ \ u_j\in
 C^{2+\alpha,1+\frac{\alpha}{2}}(\Omega_j),\ \ \forall
 j\in I_{N-1}\}
 $$
where $C^{2+\alpha,1+\frac{\alpha}{2}}(\Omega_j)$ with the usual norm
$\|\cdot\|_{C^{2+\alpha,1+\frac{\alpha}{2}}(\Omega_j)}$ denotes the Banach space of
functions $u$ on $\Omega_j$ having continuous derivatives
$\frac{\partial^{r+s} u}{\partial t^r \partial x_j^s}$ for $2r+s\leq
2$ and finite  H\"{o}lder constraints of the indicated exponents in the
case of $\alpha>0$. Then $C^{2+\alpha,1+\frac{\alpha}{2}}(\Omega)$ is a
Banach space endowed with the norm
$$\|u\|_{C^{2+\alpha,1+\frac{\alpha}{2}}(\Omega)}=\sum_{j=1}^{N-1}\|u_j\|_{C^{2+\alpha,1+\frac{\alpha}{2}}(\Omega_j)}.$$
Similarly we can define $C^{2+\alpha}(G)$, $W^{2}_p(G)$ and $W^{2,1}_p(\Omega)$ for any fixed $\alpha\in [0,1)$ and $p\geq 1$.

\subsection{The population model in the river network}

Since Speirs and Gurney's work \cite{Speirs2001}, the dynamics of a
population living in a one-dimensional river has been described by
the following reaction-diffusion-advection equation:
 \begin{equation}\label{modelsingleriver}
 \frac{\partial u}{\partial t}=D\frac{\partial^2 u}{\partial x^2}-v\frac{\partial u}{\partial x}+f(x,u)u,
\end{equation}
where $u(x,t)$ is the population density at location $x$ and time $t$, $D$
is the diffusion coefficient, $v$ is the flow velocity, and $f$ is the per
capita growth rate.

We adapt  model (\ref{modelsingleriver}) to a population living in a river network $G$. The dynamics of the population can be described by
 \begin{equation}\label{model}
 \frac{\partial u_j}{\partial t}=D_j\frac{\partial^2 u_j}{\partial x_j^2}-v_j\frac{\partial u_j}{\partial x_j}+f_j(x_j,u_j)u_j,
 \,\, x_j\in (0,l_j),\,\,j\in I_{N-1}, \,\,t>0,
 \end{equation}
where $u_j$ is the population density on the edge $k_j$, $D_j$ is the diffusion coefficient on $k_j$, $v_j$
is the flow velocity on $k_j$, and $f_j$ is the per capita growth rate on $k_j$.
The initial population distribution in $G$ is $u^0$, that is,
 \begin{equation}\label{modelic}
 u_j(x_j,0)=u_j^0(x_j),\, x_j\in [0,l_j],\,\, j\in I_{N-1}.
 \end{equation}

There are three types of vertices
in the river network $G$: upstream boundary ends,
downstream boundary ends, and interior junction vertices. Correspondingly, boundary or interface conditions are imposed at each vertex of $E$.
 \begin{itemize}
 \item At an upstream boundary point $e_i\in E_u$ that only connects
 to the edge $k_j$, the boundary condition can be assumed as
 \begin{equation}\label{bcupstreamzf}
 \alpha_{j,1}u_j(e_i,t)-\beta_{j,1}\frac{\partial u_j}{\partial x_j}(e_i,t)=0 \,\,\mbox{ with } \alpha_{j,1}\geq 0,\,\,
 \beta_{j,1}\geq 0,\,\, \alpha_{j,1}+\beta_{j,1}> 0,
 \end{equation}
for instance,
 \begin{equation}\label{bczfex}
\mbox{the zero-flux boundary condition: } \quad \left(D_j\frac{\partial u_j}{\partial x_j}-v_j u_j\right)(e_i,t)=0. \,
 \end{equation}

 \item At a downstream boundary point $e_i\in E_d$ that only connects
to the edge $k_j$, the boundary condition can be assumed as

 \begin{equation}\label{bcdownstreamzf}
 \alpha_{j,2}u_j(e_i,t)+\beta_{j,2}\frac{\partial u_j}{\partial x_j}(e_i,t)=0 \,\,
 \mbox{ with } \alpha_{j,2}\geq 0,\,\, \beta_{j,2}\geq 0,\,\, \alpha_{j,2}+\beta_{j,2}> 0,
 \end{equation}
for instance,
 \begin{equation}\label{bcff}
 \mbox{ the free flow (or Neumann) condition: }\quad \frac{\partial u_j}{\partial x_j}(e_i,t)=0 \,\mbox{ or }
 \end{equation}
 \begin{equation}\label{bchostile}
 \mbox{ the hostile (or Dirichlet) condition:  }\quad u_j(e_i,t)=0.
 \end{equation}

  \item At an interior junction point $e_i\in E_r$, the population density is
   continuous and the total population flux in and out is zero. Hence,
   the connection conditions are the continuity conditions and Kirchhoff laws:
 \begin{subequations}\label{interconds}
\begin{equation}\label{1-interconds}
  u_{i_1}(e_i,t)=u_{i_2}(e_i,t)=\cdots=u_{i_m}(e_i,t),
\end{equation}
\begin{equation}\label{2-interconds}
   \sum_{j=i_1}^{i_m} d_{ij}A_j D_j\frac{\partial u_j}{\partial x_j}(e_i,t)=0,
\end{equation}
\end{subequations}
where $e_i\in E_r$ connects to edges $k_{i_1}$, $k_{i_2}$, $\cdots$, and $k_{i_m}$, $d_{ij}$ is defined in (\ref{directiond}),
 $A_j$ is the wetted cross-sectional area of the edge $k_j$,
 and (\ref{2-interconds}) is the result of substituting the continuity condition (\ref{1-interconds}) and the conservation of the flow at $e_i$
 \begin{equation}\label{conservationflow}
 \sum_{j=i_1}^{i_m} d_{ij}A_jv_j=0,
 \end{equation}
 into the zero-flux condition at $e_i$
 \begin{equation}\label{intercond2}
 \sum_{j=i_1}^{i_m} d_{ij}A_j\left(D_j\frac{\partial u_j}{\partial x_j}-v_j u_j \right)(e_i,t)=0.
 \end{equation}

 \end{itemize}

According to different ecological conditions at boundary vertices on $G$, we further use the following notations throughout the paper:
 $$
 \begin{array}{l}
 E_0=\{e_i\in E_b:
 \mbox{ vertices with hostile (or Dirichlet) condition}\},\\
 E_b\setminus E_0: \mbox{ vertices not assigned with hostile condition}.
 \end{array}
 $$

We finally define an initial boundary value problem for a population  in a river network:
 $$
 \noindent
 \hspace{-1.5cm}{\bf (IBVP) \hspace{3cm}(\ref{model}),\, (\ref{modelic}),\, (\ref{bcupstreamzf}),\, (\ref{bcdownstreamzf})\,  \mbox{
 and } (\ref{interconds})}.
 $$

Furthermore, we impose the following assumptions in different parts of the paper.
\begin{enumerate}
 \item [{\bf [H1]}] For each $j\in I_{N-1}$, $D_j>0$, $v_j\geq 0$, $A_j>0$.

  \item   [{\bf [H2]}] For each $j\in I_{N-1}$, $f_j: [0,l_j]\times[0,\infty)\rightarrow \mathbb{R}$ is continuous and there exists a constant $M_j>0$ such that $f_j(x,u)\leq 0$ for any $x\in [0,l_j]$ and $u\ge M_j$,
and $f_j(\cdot,u_j)u_j$ is Lipschitz continuous in $u_j$ with Lipschitz constant $L_j>0$.
 \item [{\bf [H3]}]  For each $j\in I_{N-1}$, $f_j(\cdot,u_j)$
is monotonically decreasing in $u_j$.
 \end{enumerate}

By adapting theories for parabolic and elliptic equations on intervals and/or networks \cite{DMugnolo,WArendt,Fijavz,Protterbook1967,vonbelow1988, vonbelow1991,vonbelow1994,LSU,So,Pao,RDbook}, we develop the fundamental theories of parabolic and elliptic problems on networks corresponding to {\bf (IBVP)}; see Appendix \ref{appendixtheories}.
In particular, for  linear parabolic problems, we establish the strong maximum principle (in Lemma \ref{maxiprinciple}), Hopf boundary lemma for networks (in Lemma \ref{signderivative}), comparison principle (in Lemma \ref{compprinciple}), and the existence, uniqueness, $L^p$ and Schauder estimates of solutions (in Theorem \ref{linearexistence}, via writing the differential operator into a self-adjoint operator on the network); for the nonlinear problem {\bf (IBVP)}, we develop the theory of the existence, uniqueness and positivity of solutions (in Theorem \ref{globalexistence}, by using the upper and lower solutions) and prove the monotonicity and strict subhomogeneity of the solution map (in Lemmas \ref{Qtmonotone} and \ref{Qtstrsubhomo}, respectively); for the corresponding elliptic problems, we also obtain the strong maximum principle (in Lemma \ref{maxiprinciple-elliptic}), Hopf boundary lemma (in Lemma \ref{signderivative-elliptic}), comparison principle (in Lemma \ref{positivesolell}), and the existence, uniqueness, $L^p$ and Schauder estimates of solutions  (in Theorems \ref{modelsslinsolex} and \ref{ellipticeqnsolexis}). These mathematical preparations enable us to establish the extinction/persistence criteria for system {\bf (IBVP)}.

\section{The eigenvalue problem and population persistence}\label{eigensection}

In this section, we consider the eigenvalue problem corresponding to the linearized system of {\bf (IBVP)} at the trivial solution, obtain the existence of the principal eigenvalue,  and then use the principal eigenvalue as a threshold for population persistence and extinction. We also obtain the existence, uniqueness and stability of a positive steady state when the population persists. Assumptions {\bf [H1]}-{\bf [H3]} are all imposed throughout the rest of the paper.

\subsection{The eigenvalue problem and its principal eigenvalue}

We first introduce some Banach spaces which will be used frequently later. Denote
 \begin{equation}
 \label{functionspace1}
 X=\{\varphi\in C^1(G):\ \,\varphi \mbox{ satisfies (\ref{bcupstreamzf}) and (\ref{bcdownstreamzf})}\},
 \end{equation}
and let
 \begin{equation}
 \label{functionspace1cone}
 X_+=\{\varphi\in X:\ \ \varphi\geq 0,\ \varphi\not\equiv 0\}
 \end{equation}
be the positive cone in $X$. The interior of $X_+$ is
 \begin{equation}
 \label{functionspace1int}
 X^o=\{\varphi\in X:\ \ \varphi>0 \mbox{ on } G\setminus E_0, \mbox{ and }  d_{ij}\varphi_{x_j}(e_i)<0 \mbox{ if }  e_i\in E_0 \}.
 \end{equation}
Then $X_+$ is a solid cone of $X$ with nonempty interior $X^o$. We also write
$
 \varphi_1\gg\varphi_2\ \  \mbox{if}\ \varphi_1-\varphi_2\in X^o.
$

The linearization of {\bf (IBVP)} at the trivial solution $u=0$ is
\begin{equation}\label{modellin}
 \begin{cases}
 \D
\frac{\partial u_j}{\partial t}=D_j\frac{\partial^2 u_j}{\partial x_j^2}-v_j\frac{\partial u_j}{\partial x_j}+f_j(x_j,0)u_j,
 & x_j\in (0,l_j),\,j\in I_{N-1}, \,t>0,\\
 \displaystyle
  (\ref{modelic}),\,  (\ref{bcupstreamzf}),\, (\ref{bcdownstreamzf}),\, \mbox{
and } (\ref{interconds}).&
 \end{cases}
\end{equation}
Substituting  $u_j(x_j,t)=e^{\lambda t}\psi_j(x_j)$ into
(\ref{modellin}), we obtain the corresponding eigenvalue
problem
 \begin{equation}\label{modellineigen10}
 \begin{cases}
 \D
 \lambda\psi_j(x_j)=D_j\frac{\partial^2 \psi_j}{\partial x_j^2}-v_j\frac{\partial \psi_j}{\partial x_j}+f_j(x_j,0)\psi_j, & x_j\in (0,l_j),\,j\in I_{N-1}, \\
 \displaystyle
  \alpha_{j,1}\psi_j(e_i)-\beta_{j,1}\frac{\partial \psi_j}{\partial x_j}(e_i)= 0, & \forall e_i\in E_u,\\
 \displaystyle
 \alpha_{j,2}\psi_j(e_i)+\beta_{j,2}\frac{\partial \psi_j}{\partial x_j}(e_i)= 0,& \forall e_i\in E_d,  \\
 \displaystyle
 \psi_{i_1}(e_i)=\cdots=\psi_{i_m}(e_i),\,\,\,
 \sum\limits_{j=i_1}^{i_m} d_{ij}A_jD_j\frac{\partial \psi_j}{\partial x_j} (e_i)=0, &
 \forall e_i\in E_r.
 \end{cases}
 \end{equation}

For simplicity, denote $\mathcal{L}$ to be the operator such that $\mathcal{L}|_{k_j}=\mathcal{L}_j$, where
\begin{equation}\label{Ljdefinition}
\mathcal{L}_j=D_j\frac{\partial^2 }{\partial x_j^2}-v_j\frac{\partial}{\partial x_j}+f_j(\cdot,0).
\end{equation}

The following result indicates that (\ref{modellineigen10}) admits a
simple eigenvalue associated with a positive
eigenfunction. The proof is given in Appendix \ref{proofpropeigenvalueexth}.
\begin{Proposition}\label{eigenvalueexth}
The eigenvalue problem (\ref{modellineigen10}) admits a simple
eigenvalue $\lambda^\ast$ associated with a positive
eigenfunction $\psi^\ast\in X^o$. None of the other eigenvalues of (\ref{modellineigen10})
corresponds to a positive eigenfunction; and if $\lambda\neq
\lambda^\ast$ is an eigenvalue of (\ref{modellineigen10}), then
$\mathrm{Re}(\lambda)\leq \lambda^\ast.$
\end{Proposition}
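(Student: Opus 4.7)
The plan is to reduce the statement to an application of the Krein-Rutman theorem for a compact, strongly positive operator on the ordered Banach space $X$. Let $S(t)$ denote the solution semigroup on $X$ of the linear parabolic problem (\ref{modellin}), whose existence and smoothing estimates are supplied by Theorem \ref{linearexistence} in the appendix. For each fixed $t>0$ the parabolic Schauder estimates place $S(t)\varphi$ in $C^{2+\alpha}(G)\cap X$, so by compact embedding $S(t):X\to X$ is a compact linear operator.

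Next I would verify that $S(t)$ is strongly positive for every $t>0$, namely $S(t)(X_+)\subset X^o$. Given $0\not\equiv\varphi\in X_+$, set $u(\cdot,t)=S(t)\varphi$. The parabolic comparison and strong maximum principles for networks (Lemmas \ref{maxiprinciple} and \ref{compprinciple}) yield $u(\cdot,t)>0$ on $G\setminus E_0$ for every $t>0$; at any Dirichlet vertex $e_i\in E_0$, the parabolic Hopf boundary lemma (Lemma \ref{signderivative}) enforces $d_{ij}u_{x_j}(e_i,t)<0$. Hence $S(t)\varphi\in X^o$.

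With $S(t)$ compact, strongly positive, and the cone $X_+\subset X$ solid, the Krein-Rutman theorem delivers a simple principal eigenvalue $\mu^\ast(t)=r(S(t))>0$ with a unique (up to a positive scalar) eigenfunction $\psi^\ast\in X^o$; every other eigenvalue of $S(t)$ has strictly smaller modulus and no other eigenvalue admits a positive eigenfunction. Since the parabolic semigroup is analytic with compact resolvent, the eigenvalue spectral mapping $\sigma_p(S(t))\setminus\{0\}=\{e^{\lambda t}:\lambda\in\sigma_p(\mathcal{L})\}$ holds; alternatively, one notes directly that $\mathcal{L}\psi=\lambda\psi$ implies $S(t)\psi=e^{\lambda t}\psi$. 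Thus $\mu^\ast(t)=e^{\lambda^\ast t}$ for a necessarily real eigenvalue $\lambda^\ast$ of (\ref{modellineigen10}) (reality follows by testing the equation against the positive eigenfunction $\psi^\ast$), and $\psi^\ast$ is shared between the two operators; simplicity of $\mu^\ast(t)$ transfers to simplicity of $\lambda^\ast$, and for any other eigenvalue $\lambda$ of $\mathcal{L}$ the inequality $|e^{\lambda t}|\le e^{\lambda^\ast t}$ gives $\mathrm{Re}(\lambda)\le\lambda^\ast$.

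I expect the main technical point to be the strong positivity step at interior ramification vertices. The Kirchhoff flux condition together with continuity at $e_i\in E_r$ must be combined with the network Hopf lemma on each incident edge in order to preclude a nonnegative, nontrivial $u(\cdot,t)$ from vanishing at $e_i$: if $u(e_i,t_0)=0$ for some $t_0>0$ while $u\ge 0$, then on each incident edge $k_j$ the sign of $d_{ij}u_{x_j}(e_i,t_0)$ is forced by the Hopf lemma, and summing these weighted by $A_jD_j$ contradicts the Kirchhoff identity $\sum_{j} d_{ij}A_jD_j u_{x_j}(e_i,t_0)=0$ unless $u\equiv 0$. The mixed Robin-type conditions at boundary vertices in $E_u\cup(E_d\setminus E_0)$ similarly require the boundary Hopf lemma to upgrade nonnegativity into strict positivity there, which is what is needed to conclude membership in $X^o$.
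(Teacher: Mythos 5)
Your argument is correct in substance, but it takes a genuinely different route from the paper. You apply Krein--Rutman to the time-$t$ solution operator $S(t)$ of the linear parabolic problem (\ref{modellin}) and then transfer the conclusion to $\mathcal{L}$ via the point-spectrum spectral mapping theorem; the paper instead applies Krein--Rutman to the elliptic resolvent $T=(\xi I-\mathcal{L})^{-1}$ for $\xi$ large enough that $f_j(\cdot,0)-\xi<0$, whose compactness is immediate from the Schauder estimate in Theorem \ref{modelsslinsolex} (bounded sets in $X$ are mapped into bounded sets of $C^{2+\alpha}(G)$) and whose strong positivity comes from the comparison principle and Hopf lemma, and then reads off $\lambda^{\ast}=\xi-1/r(T)$ with the same eigenfunction $\psi^{\ast}=Tg^{\ast}$. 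The trade-off: the paper's resolvent route avoids both the spectral mapping theorem and any parabolic smoothing estimate, since everything is elliptic and the needed a priori bound is already proved in the appendix; your semigroup route is more dynamical and connects the principal eigenvalue directly to the large-time behaviour used later in Theorem \ref{persistenceresult}, but it leans on two facts not literally supplied by the paper, namely (a) a smoothing estimate $S(t):X\to C^{2+\alpha}(G)$ for $t>0$ without compatibility conditions at $t=0$ (Theorem \ref{linearexistence}(ii) assumes such conditions, so you would have to invoke analyticity of the semigroup generated by $\mathcal{A}$ plus an interior-in-time regularity bootstrap), and (b) the identity $\sigma_p(S(t))\setminus\{0\}=e^{t\,\sigma_p(\mathcal{L})}$ together with the matching of eigenspaces, which is standard for eventually compact/analytic semigroups but requires citation. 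Your identification of the strong-positivity step at ramification vertices as the technical crux is apt, and your Kirchhoff-plus-Hopf contradiction there is exactly the mechanism the paper builds into Lemma \ref{maxiprinciple}; with the two items above made precise, your proof is complete.
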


Let $\lambda^\ast$ be the eigenvalue of the eigenvalue
problem (\ref{modellineigen10}) with a corresponding positive eigenfunction
$\psi^\ast\in X^o$. We call $\lambda^\ast$ the {\it principal eigenvalue } of (\ref{modellineigen10}).

We say that $\mathcal{L}$ has the {\it strong maximum principle property} if $u\in C^2(G)$ satisfying
 \begin{equation}\label{modelsslintt-aa}
 \begin{cases}
 \D
  -\mathcal{L}_{j} u_j(x_j)\geq0, & x_j\in (0,l_j),\,j\in I_{N-1},\\
 \displaystyle
 \alpha_{j,1}u_j(e_i)-\beta_{j,1}\frac{\partial u_j}{\partial x_j}(e_i)\geq0,& \forall e_i\in E_u,\\
 \displaystyle
\alpha_{j,2}u_j(e_i)+\beta_{j,2}\frac{\partial u_j}{\partial x_j}(e_i)\geq0, & \forall e_i\in E_d,  \\
 \displaystyle
  u_{i_1}(e_i)=\cdots=u_{i_m}(e_i),\,\,
 \sum_{j=i_1}^{i_m} d_{ij}A_jD_j\frac{\partial u_j}{\partial x_j}(e_i)\geq0, &\forall e_i\in E_r
 \end{cases}
 \end{equation}
implies that $u>0$ in $G\backslash E_0$ unless $u\equiv0$. We also say $u\in C^2(G)$ is an upper solution of $\mathcal{L}$ if
\eqref{modelsslintt-aa} holds, and such $u$ is called as a strict upper solution of $\mathcal{L}$ if it is an upper solution but
is not a solution.  Then the analysis of \cite[Theorem 2.4]{Dubook} can be easily adapted to conclude the following result.

\begin{Proposition}\label{equivalent-property} The following statements are equivalent.
 \begin{enumerate}
 \item[{\rm(i)}] $\mathcal{L}$ has the strong maximum principle property;

 \item[{\rm(ii)}] $\mathcal{L}$ has a strict upper solution which is positive in $G\backslash E_0$;

 \item[{\rm(iii)}] $\lambda^\ast<0$.

 \end{enumerate}
\end{Proposition}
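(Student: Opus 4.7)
The plan is to establish the cyclic implications (iii) $\Rightarrow$ (ii) $\Rightarrow$ (i) $\Rightarrow$ (iii), using the principal eigenfunction $\psi^\ast\in X^o$ from Proposition~\ref{eigenvalueexth} together with the strong maximum principle (Lemma~\ref{maxiprinciple-elliptic}) and the Hopf boundary/junction lemma (Lemma~\ref{signderivative-elliptic}) for the elliptic operator $\mathcal{L}$ on the network.

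For \textbf{(iii) $\Rightarrow$ (ii)}, take $\bar{u}:=\psi^\ast$. Since $\lambda^\ast<0$ and $\psi^\ast>0$ on $G\setminus E_0$, one has $-\mathcal{L}_j\psi_j^\ast=-\lambda^\ast\psi_j^\ast>0$ on every edge $k_j$, while the linear homogeneous boundary and Kirchhoff conditions in \eqref{modelsslintt-aa} hold with equality; so $\psi^\ast$ is a strict upper solution positive on $G\setminus E_0$. For \textbf{(i) $\Rightarrow$ (iii)}, I would argue by contradiction: if $\lambda^\ast\ge 0$, then $-\psi^\ast$ satisfies every boundary and junction condition in \eqref{modelsslintt-aa} with equality and $-\mathcal{L}_j(-\psi_j^\ast)=\lambda^\ast\psi_j^\ast\ge 0$, so $-\psi^\ast$ is an upper solution. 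By (i) we would then have $-\psi^\ast>0$ on $G\setminus E_0$ or $-\psi^\ast\equiv 0$, both of which contradict $\psi^\ast\in X^o$.

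The main obstacle is \textbf{(ii) $\Rightarrow$ (i)}, which I would attack with a sliding argument. Given a strict upper solution $\bar{u}>0$ on $G\setminus E_0$ and a nontrivial upper solution $u$ of \eqref{modelsslintt-aa}, set
\[
c^\ast:=\inf\{\,c\ge 0 : u+c\bar{u}\ge 0 \text{ on } G\,\}.
\]
If $c^\ast=0$, then $u\ge 0$ on $G$, and applying Lemma~\ref{maxiprinciple-elliptic} together with Lemma~\ref{signderivative-elliptic} to $u$ yields $u>0$ on $G\setminus E_0$. Otherwise the nontrivial upper solution $w:=u+c^\ast\bar{u}$ is nonnegative on $G$ but attains $w(x_0)=0$ at some $x_0\in G$. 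If $x_0$ lies in the relative interior of an edge $k_j$, the interior strong maximum principle forces $w\equiv 0$ on $k_j$; if $x_0\in E_b\setminus E_0$, the Hopf lemma forces the same conclusion on the incident edge. In either case we propagate $w\equiv 0$ across the tree and reduce to $u=-c^\ast\bar{u}$, which substituted back into \eqref{modelsslintt-aa} forces every defining inequality for $\bar{u}$ to hold with equality, contradicting its strictness as an upper solution.

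The delicate case, where the metric-tree setting genuinely enters, is $x_0\in E_r$. On each edge $k_h$ incident to $x_0$, the minimum principle gives $d_{ih}w_{x_h}(x_0)\le 0$, with strict inequality by Lemma~\ref{signderivative-elliptic} whenever $w\not\equiv 0$ on $k_h$. If $w\not\equiv 0$ on at least one such incident edge, then
\[
\sum_{h\text{ incident to }x_0} d_{ih}A_hD_h w_{x_h}(x_0)<0,
\]
contradicting the Kirchhoff inequality that $w$ inherits from \eqref{modelsslintt-aa}. Otherwise $w\equiv 0$ on every edge incident to $x_0$, and iterating across neighbouring junctions (possible because $G$ is a finite connected tree) yields $w\equiv 0$ on all of $G$, reducing once more to the contradiction of the previous paragraph. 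This exhausts the cases and completes the cycle of implications.
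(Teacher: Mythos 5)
The paper does not actually prove this proposition; it simply states that ``the analysis of [Du, Theorem 2.4] can be easily adapted.'' Your cyclic argument (iii) $\Rightarrow$ (ii) $\Rightarrow$ (i) $\Rightarrow$ (iii) is precisely that adaptation written out in full: (iii) $\Rightarrow$ (ii) via the principal eigenfunction, (i) $\Rightarrow$ (iii) by testing the maximum principle property against $-\psi^\ast$, and (ii) $\Rightarrow$ (i) by the sliding/touching argument, with the Kirchhoff inequality at an interior junction playing the role of the Hopf boundary contradiction. The skeleton is correct, and your treatment of the junction case $x_0\in E_r$ --- summing the strict Hopf inequalities $d_{ih}A_hD_h w_{x_h}(x_0)<0$ against the inherited Kirchhoff inequality --- is exactly the point where the network structure enters and is handled correctly.

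Two technical points are glossed over and should be patched. First, you never verify that $\{c\ge 0: u+c\bar u\ge 0\}$ is nonempty: if $\bar u$ vanishes on $E_0$ (as $\psi^\ast$ does when Dirichlet vertices are present), positivity of $\bar u$ on $G\setminus E_0$ alone does not give a uniform lower bound. One must use that $u(e_i)\ge 0$ at $e_i\in E_0$ (from \eqref{modelsslintt-aa}) together with the Hopf inequality $d_{ij}\bar u_{x_j}(e_i)<0$ to compare $u$ and $\bar u$ to first order near $E_0$, and compactness elsewhere. Second, your case analysis for the touching point $x_0$ omits $x_0\in E_0$, where $w(x_0)=0$ may hold automatically (both $u$ and $\bar u$ vanish there) and yields no contradiction to the minimality of $c^\ast$; the same first-order Hopf comparison at $E_0$ shows that minimality forces a touching point in $G\setminus E_0$ (or on an edge interior), after which your three cases apply. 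Finally, note that Lemmas \ref{maxiprinciple-elliptic} and \ref{signderivative-elliptic} require a nonnegative zeroth-order coefficient, so each application to $-\mathcal{L}$ needs the routine shift by $\xi u$ with $\xi$ large, using $w\ge 0$; this is standard but worth a sentence.
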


\subsection{Persistence and extinction}

We now use the sign of the principal eigenvalue of (\ref{modellineigen10}) to
determine the population persistence or extinction as well as the existence of a positive steady state for
{\bf (IBVP)}, which satisfies the following elliptic equations:
 \begin{equation}\label{modelss}
 \begin{cases}
 \D
 -D_j\frac{\partial^2 u_j}{\partial x_j^2}+v_j\frac{\partial u_j}{\partial x_j}=f_j(x_j,u_j)u_j, &x_j\in (0,l_j),\,j\in I_{N-1}, \\
 \displaystyle
 \alpha_{j,1}u_j(e_i)-\beta_{j,1}\frac{\partial u_j}{\partial x_j}(e_i)= 0, & \forall e_i\in E_u,\\
 \displaystyle
 \alpha_{j,2}u_j(e_i)+\beta_{j,2}\frac{\partial u_j}{\partial x_j}(e_i)= 0, &  \forall e_i\in E_d,\\
 \displaystyle
   u_{i_1}(e_i)=\cdots=u_{i_m}(e_i),\ \
\sum_{j=i_1}^{i_m} d_{ij}A_jD_j\frac{\partial u_j}{\partial
x_j}(e_i)= 0,& \forall e_i\in E_r.
  \end{cases}
\end{equation}

The following result shows that the principal eigenvalue $\lambda^\ast$ is the key threshold of extinction/persistence for {\bf (IBVP)}. The proof is given in Appendix \ref{proofThpersistenceresult}.

\begin{Theorem}\label{persistenceresult}
Let $\lambda^\ast$ be the principal eigenvalue of the eigenvalue
problem (\ref{modellineigen10}) with corresponding eigenfunction
$\psi^\ast\in X^o$. Then
 \begin{enumerate}
 \item[{\rm(i)}] If $\lambda^\ast\leq0$, then $u\equiv 0$ is globally attractive for {\bf (IBVP)}
 for all initial values in $X_+$.

 \item[{\rm(ii)}] If $\lambda^\ast> 0$, then {\bf (IBVP)} admits a unique positive steady state $u^\ast\in X^o$ which is globally
attractive for all initial values in $X_+\setminus \{0\}$.
\end{enumerate}
\end{Theorem}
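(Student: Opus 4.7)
The plan is to combine the principal eigenvalue information from Proposition \ref{eigenvalueexth} with the abstract structure of the nonlinear solution semiflow $Q_t$ established in the Appendix: $Q_t$ is well defined on $X_+$ (Theorem \ref{globalexistence}), monotone (Lemma \ref{Qtmonotone}), strictly subhomogeneous (Lemma \ref{Qtstrsubhomo}), and has relatively compact orbits owing to the uniform a priori bound from \textbf{[H2]} together with parabolic Schauder estimates.

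For part (i), I would use \textbf{[H3]}, which gives $f_j(x_j,u_j)\le f_j(x_j,0)$ for $u_j\ge 0$, so the solution of \textbf{(IBVP)} starting from any $u^0\in X_+$ is bounded above by the solution $w(\cdot,t)$ of the linearization (\ref{modellin}) with the same initial data (Lemma \ref{compprinciple}). Since $\psi^\ast\in X^o$, there exists $K>0$ with $u^0\le K\psi^\ast$, and linearity gives $w(\cdot,t)\le K e^{\lambda^\ast t}\psi^\ast$. In the subcritical case $\lambda^\ast<0$ this forces $Q_tu^0\to 0$ in $X$. The critical case $\lambda^\ast=0$, which I expect to be the main technical obstacle, needs the subhomogeneity: by compactness, $\omega(u^0)$ is nonempty and invariant; if it contained a nontrivial element, monotone semiflow theory together with Lemma \ref{Qtstrsubhomo} would produce a positive fixed point $u^\circ$ of $Q_t$, which as a solution of (\ref{modelss}) would satisfy $\mathcal{L}u^\circ=[f_j(\cdot,0)-f_j(\cdot,u^\circ)]u^\circ\ge 0$ with strict inequality at some point by \textbf{[H3]} and $u^\circ>0$ on $G\setminus E_0$. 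This makes $u^\circ$ a strict positive subsolution of $\mathcal{L}u=0$, contradicting $\lambda^\ast=0$ via the sub/supersolution characterization in Proposition \ref{equivalent-property}; hence $\omega(u^0)=\{0\}$.

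For part (ii), with $\lambda^\ast>0$, I would produce a positive steady state by the upper–lower solution method. A small multiple $\underline{u}=\epsilon\psi^\ast$ is a strict subsolution of (\ref{modelss}): substitution and use of $\mathcal{L}\psi^\ast=\lambda^\ast\psi^\ast$ give a residual proportional to $\lambda^\ast+f_j(x_j,\epsilon\psi^\ast)-f_j(x_j,0)$, strictly positive for $\epsilon>0$ small by continuity of $f_j$. A large constant $\overline{u}=M\ge\max_j M_j$ is a supersolution thanks to \textbf{[H2]}: the reaction is nonpositive, the Robin-type inequalities at $E_b$ hold with the correct sign, and the Kirchhoff condition at interior junctions is trivially satisfied by constants. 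A monotone iteration between $\underline{u}$ and $\overline{u}$, justified by the elliptic existence theory in Theorems \ref{modelsslinsolex} and \ref{ellipticeqnsolexis}, then furnishes a positive steady state $u^\ast$ with $\epsilon\psi^\ast\le u^\ast\le M$, and Lemmas \ref{maxiprinciple-elliptic} and \ref{signderivative-elliptic} place $u^\ast\in X^o$.

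Uniqueness of $u^\ast$ and global attractivity in $X_+\setminus\{0\}$ then follow together from monotone semiflow theory. For uniqueness, given two positive steady states $u_1^\ast,u_2^\ast\in X^o$, I would set $\tau^\ast=\sup\{\tau>0:u_1^\ast\ge\tau u_2^\ast\}$; if $\tau^\ast<1$, applying $Q_t$, strict subhomogeneity (Lemma \ref{Qtstrsubhomo}) and strong monotonicity would yield $u_1^\ast=Q_tu_1^\ast\gg\tau^\ast Q_tu_2^\ast=\tau^\ast u_2^\ast$, contradicting maximality of $\tau^\ast$, so $\tau^\ast\ge 1$ and by symmetry $u_1^\ast=u_2^\ast$. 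For global attractivity, any $u^0\in X_+\setminus\{0\}$ satisfies $Q_{t_0}u^0\in X^o$ for any $t_0>0$ by the strong maximum principle (Lemma \ref{maxiprinciple}) and Hopf lemma (Lemma \ref{signderivative}), so $\epsilon\psi^\ast\le Q_{t_0}u^0\le\overline{u}$ for suitable $\epsilon$ and supersolution $\overline{u}$; the monotone trajectories from these bounds are respectively increasing and decreasing, both converging to positive fixed points of $Q_t$ that by uniqueness equal $u^\ast$, and the sandwich forces $Q_tu^0\to u^\ast$ in $X$.
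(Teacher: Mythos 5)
Your proposal is correct and follows essentially the same route as the paper: comparison with $\sigma e^{\lambda^\ast t}\psi^\ast$ when $\lambda^\ast<0$, reduction of the critical case $\lambda^\ast=0$ to the nonexistence of a positive steady state via Proposition \ref{equivalent-property}, the sub/supersolution pair $\epsilon\psi^\ast$ and a large constant for existence when $\lambda^\ast>0$, the $\tau^\ast=\sup\{\tau: u_1^\ast\ge\tau u_2^\ast\}$ argument with strict subhomogeneity for uniqueness, and the monotone sandwich for global attractivity. The only cosmetic difference is that in the critical case the paper obtains the candidate steady state as the decreasing limit of the trajectory started from the upper solution $\sigma_0\psi^\ast$ rather than through an $\omega$-limit-set argument, but both reduce to the same contradiction.
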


The above theorem indicates that $\lambda^\ast=0$ is a
threshold to determine population persistence on a river network.
The idea has been used in \cite{Ramirez2012, Sarhad2015,Sarhad2014}
but this is the first time that it is rigorously proved for population
models on river networks.

\section{The net reproductive rate $\mathcal{R}_0$}\label{netR0section}

The net reproductive rate has been defined and proved to be a threshold quantity for population persistence in a single river channel \cite{McKenzieetal2012,Huangetal2016}.
In this section, we will define the next generation operator and the net reproductive rate $\mathcal{R}_0$ for
{\bf (IBVP)} and then use $\mathcal{R}_0$ to determine the population persistence and extinction.
Moreover, we will provide a numerical method to calculate $\mathcal{R}_0$.

\subsection{Definition of the net reproductive rate $\mathcal{R}_0$}

Assume the growth rate of the population on edge $k_j$ satisfies
$f_j(x_j,u_j)u_j=\tilde{f}_j(x_j,u_j)u_j-m_j(x_j)u_j$, where $\tilde{f}_j$
is the recruitment rate and $m_j(x_j)$ is the mortality rate. Let
$r_j(x_j)=\tilde{f}_j(x_j,0)$ and assume $r, m\in C(G)$. Then
 $$
 \frac{\partial(f_j(\cdot,u_j)u_j)}{\partial u_j}(x_j,0)=f_j(x_j,0)=r_j(x_j)-m_j(x_j).
 $$

For $\phi^0\in X$, assume that $\phi$ satisfies
 \begin{equation}\label{modellinphi}
 \begin{cases}
 \D
 \frac{\partial \phi_j}{\partial t}=D_j\frac{\partial^2 \phi_j}{\partial x_j^2}
 -v_j\frac{\partial \phi_j}{\partial x_j}-m_j(x_j)\phi_j, & x_j\in (0,l_j),\,j\in I_{N-1}, \,t>0,\\
 \displaystyle
  \phi_j(x_j,0)=\phi_j^0(x_j),& x_j\in [0,l_j],\\
 \displaystyle
 \phi \mbox{ satisfies } (\ref{bcupstreamzf}),\, (\ref{bcdownstreamzf}),\, \mbox{
 and } (\ref{interconds}).&
 \end{cases}
 \end{equation}
Define $\Gamma:X\rightarrow X$  by
 $$
 [\Gamma(\phi^0)]_j(x_j)=\int_0^\infty
 r_j(x_j)\phi_j(x_j,t)dt,\,\,x_j\in[0,l_j],\,\,j\in I_{N-1},
 $$
where $\phi$ is the solution of (\ref{modellinphi}) with initial
condition $\phi^0$. That is, $\Gamma$ is a linear operator mapping an initial distribution
of the population to its offspring distribution. Hence, we call
$\Gamma$ the {\it next generation operator}. Let
 $$
 \mathcal{R}_0=r(\Gamma),
 $$
where $r(\Gamma)$ is the spectral radius of the linear operator $\Gamma$ on $X$.
Then $\mathcal{R}_0$ represents the average number of offsprings that an individual
produces during its lifetime and we call $\mathcal{R}_0$
the {\it net reproductive rate}.

Let $B:\mathcal{D}\rightarrow \mathcal{D}$ with
 \begin{equation}
 \label{Adomaindefinition}
 \mathcal{D}=\{\varphi\in C^2(G\setminus E_b)\cap C^1(G):\ \ \varphi \mbox{ satisfies (\ref{bcupstreamzf}),\,
 (\ref{bcdownstreamzf}),\, (\ref{interconds})}\},
 \end{equation}
be defined by
 $$
 \begin{array}{l}
 B_j=\D D_j\frac{\partial^2 }{\partial x_j^2}-v_j\frac{\partial }{\partial x_j}-m_j(x_j), \,\,j\in I_{N-1}. \\
 \end{array}
 $$
Let $\Gamma_1: X\rightarrow X$ be such that
 $$
 [\Gamma_1(\phi^0)]_j(x_j)=\int_0^\infty \phi_j(x_j,t)dt,\,\,x_j\in[0,l_j],\,\,j\in I_{N-1},
 $$
where $\phi$ is the solution of (\ref{modellinphi}) with initial condition $\phi^0$.
Similarly as in Proposition 2.10 of \cite{McKenzieetal2012}, we can prove that $-\Gamma_1$ is the inverse operator of $B$,
\textit{i.e.}, $B^{-1}=-\Gamma_1$. Hence, $\Gamma (\phi)=-QB^{-1}(\phi)$, where the operator $Q$ is defined as
 $$
 [Q(\phi)]_j(x_j)=r_j(x_j)\phi_j(x_j),\,\,\forall x_j\in[0,l_j],\,\,
 j\in I_{N-1}, \forall \phi\in X.
 $$
Then $\mathcal{L}=B+Q$. By Proposition \ref{eigenvalueexth} and the above analysis,
noting that $(\lambda I-B)^{-1}$ is defined such that
 $$
 [(\lambda I-B)^{-1}(\phi^0)]_j(x_j)=\int_0^\infty e^{-\lambda t}\phi_j(x_j,t)dt,\,\,x_j\in[0,l_j],\,\,j\in I_{N-1},
 $$
where $\phi$ is the solution of (\ref{modellinphi}) with initial condition $\phi^0$,
we know that both $\mathcal{L}$ and $B$ are resolvent-positive operators in $X$. It follows from Propositions \ref{eigenvalueexth} and \ref{equivalent-property}
that the spectral bound $s(B)$ of $B$ is the principal eigenvalue of $B$ and $s(B)<0$. We then obtain the following result
by using \cite[Theorem 3.5]{Thieme2010}.

\begin{Lemma}\label{signr0lambda}
$\mathcal{R}_0-1$ and $\lambda^\ast$ have the same sign, where $\lambda^\ast$ is the principal eigenvalue of the eigenvalue
problem (\ref{modellineigen10}).
\end{Lemma}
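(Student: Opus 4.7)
The plan is to directly invoke Theorem 3.5 of \cite{Thieme2010}, which is tailor-made to compare the spectral bound of an operator $B+Q$ with the spectral radius of the associated ``next generation'' operator $-QB^{-1}$ whenever $B$ is resolvent-positive with negative spectral bound and $Q$ is a positive bounded perturbation. The excerpt has already assembled all the ingredients, so the work is mostly verification and bookkeeping.

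First I would verify the hypotheses of Thieme's theorem in the present setting. The decomposition $\mathcal{L}=B+Q$ is already established, with $B$ acting like $\mathcal{L}$ but with $f_j(\cdot,0)$ replaced by $-m_j$ and with $Q$ being multiplication by the nonnegative function $r_j$. Since $r_j\ge 0$, the operator $Q$ is a bounded positive operator on $X$. The text also observes that the resolvent of $B$ is given by the Laplace transform of the solution semigroup of \eqref{modellinphi}, so $B$ is resolvent-positive, and the same argument applied to $\mathcal{L}$ shows $\mathcal{L}$ is resolvent-positive. By Propositions \ref{eigenvalueexth} and \ref{equivalent-property} applied to $B$ (whose linearized coefficient is $-m_j\le 0$), the spectral bound $s(B)$ coincides with the principal eigenvalue of $B$ and satisfies $s(B)<0$, since $-m_j$ itself is a nonpositive strict upper solution (up to sign conventions) and so the principal eigenvalue condition of Proposition \ref{equivalent-property}(iii) is met.

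Next I would identify the objects to which Thieme's theorem applies. The theorem states that, under the above hypotheses, the sign of $s(B+Q)$ equals the sign of $r(-QB^{-1})-1$. Here $s(B+Q)=s(\mathcal{L})$, and by Proposition \ref{eigenvalueexth} this equals the principal eigenvalue $\lambda^\ast$ of (\ref{modellineigen10}). On the other hand, the excerpt already computes $\Gamma=-QB^{-1}$, so $r(-QB^{-1})=r(\Gamma)=\mathcal{R}_0$. Substituting both identifications yields $\mathrm{sign}(\lambda^\ast)=\mathrm{sign}(\mathcal{R}_0-1)$, which is the claim.

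The main obstacle I anticipate is not the algebra but the careful matching of functional-analytic frameworks: Thieme's theorem is typically stated in an ordered Banach space with a normal reproducing cone, and one needs the cone $X_+$ defined in \eqref{functionspace1cone} to be such a cone and the semigroup generated by $B$ on $X$ to be the positive semigroup whose Laplace transform produces the formula $[(\lambda I-B)^{-1}\phi^0]_j(x_j)=\int_0^\infty e^{-\lambda t}\phi_j(x_j,t)\,dt$. These are standard but must be checked against the network setting; the parabolic theory of the appendix (maximum principle, Lemmas \ref{maxiprinciple}--\ref{compprinciple} and Theorem \ref{globalexistence}) supplies exactly the positivity and well-posedness properties needed, so the verification is routine once one commits to this framework. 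With this checked, the conclusion follows at once from the cited theorem.
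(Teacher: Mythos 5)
Your proposal is correct and follows essentially the same route as the paper: the paper likewise decomposes $\mathcal{L}=B+Q$, notes that $B$ and $\mathcal{L}$ are resolvent-positive with $s(B)<0$ (via Propositions \ref{eigenvalueexth} and \ref{equivalent-property}) and that $\Gamma=-QB^{-1}$, and then concludes by citing Theorem 3.5 of \cite{Thieme2010}. Your additional remarks on checking the cone and semigroup framework are sensible bookkeeping but do not change the argument.
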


Theorem \ref{persistenceresult} and Lemma \ref{signr0lambda} imply the following result.

\begin{Corollary}\label{r0persistence}
If $\mathcal{R}_0\leq 1$, then $u\equiv 0$ is globally attractive for {\bf (IBVP)}; if $\mathcal{R}_0>1$,
then {\bf (IBVP)} admits a unique positive steady state $u^\ast\in X^o$, which is globally
attractive for all initial values in $X_+\setminus \{0\}$.
\end{Corollary}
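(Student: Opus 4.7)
The plan is to derive Corollary \ref{r0persistence} as an immediate consequence of combining the sign-equivalence in Lemma \ref{signr0lambda} with the dichotomy in Theorem \ref{persistenceresult}. Since both tools are already in place, the work is essentially just a case analysis according to whether $\mathcal{R}_0 \le 1$ or $\mathcal{R}_0 > 1$.

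First I would handle the extinction case. Assume $\mathcal{R}_0 \le 1$. I would split this into the subcases $\mathcal{R}_0 < 1$ and $\mathcal{R}_0 = 1$. By Lemma \ref{signr0lambda}, $\mathcal{R}_0 - 1$ and $\lambda^\ast$ share a common sign, so $\mathcal{R}_0 < 1$ gives $\lambda^\ast < 0$ and $\mathcal{R}_0 = 1$ gives $\lambda^\ast = 0$. In either subcase $\lambda^\ast \le 0$, so Theorem \ref{persistenceresult}(i) applies and yields that $u \equiv 0$ is globally attractive in $X_+$ for \textbf{(IBVP)}. One subtle point worth checking is that the sign-equivalence in Lemma \ref{signr0lambda} really does transfer the boundary case $\mathcal{R}_0 = 1 \Leftrightarrow \lambda^\ast = 0$, not merely the strict sign correspondence; this is exactly what \cite[Theorem 3.5]{Thieme2010} provides for resolvent-positive operators, so nothing extra is needed.

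Next I would handle the persistence case. Assume $\mathcal{R}_0 > 1$. Again by Lemma \ref{signr0lambda}, this forces $\lambda^\ast > 0$. Then Theorem \ref{persistenceresult}(ii) directly yields the existence of a unique positive steady state $u^\ast \in X^o$ of \textbf{(IBVP)} together with its global attractivity for initial data in $X_+ \setminus \{0\}$. There is nothing further to verify: both the existence/uniqueness claim and the attraction claim are already packaged into part (ii) of the theorem.

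Because the corollary is a direct corollary in the strict sense, there is no real obstacle to anticipate; the only thing I would be careful about is ensuring the sign correspondence is used in the correct direction (deducing a property of $\lambda^\ast$ from a hypothesis on $\mathcal{R}_0$, rather than the reverse) and that the weak inequality $\mathcal{R}_0 \le 1$ is correctly transported to $\lambda^\ast \le 0$. Under assumptions \textbf{[H1]}--\textbf{[H3]}, which are standing throughout the section, both Theorem \ref{persistenceresult} and Lemma \ref{signr0lambda} are available, so the combined statement of the corollary follows.
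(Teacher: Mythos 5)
Your proposal is correct and matches the paper exactly: the paper derives this corollary in one line by combining Lemma \ref{signr0lambda} (sign equivalence of $\mathcal{R}_0-1$ and $\lambda^\ast$, including the boundary case via \cite[Theorem 3.5]{Thieme2010}) with the dichotomy of Theorem \ref{persistenceresult}. Your extra care about the case $\mathcal{R}_0=1\Leftrightarrow\lambda^\ast=0$ is the right thing to check, and nothing further is needed.
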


Therefore, $\mathcal{R}_0=1$ is the threshold for population persistence and extinction.
The population will be extinct if $\mathcal{R}_0\leq 1$ and it is persistent if $\mathcal{R}_0>1$.

\subsection{Calculation of $\mathcal{R}_0$}

 Let
 $$
 \overline{B}_j=D_j\frac{\partial^2 }{\partial x_j^2}-v_j\frac{\partial}{\partial x_j}, \,\, j\in I_{N-1}.
 $$
Integrating (\ref{modellinphi}) with respect to $t$ from $0$ to $\infty$ yields
 $$
 \begin{array}{l}
\D \int_0^\infty\frac{\partial \phi_j}{\partial t}dt=\int_0^\infty\left[D_j\frac{\partial^2 \phi_j}{\partial
 x_j^2}-v_j\frac{\partial \phi_j}{\partial x_j}-m_j(x_j)\phi_j\right]dt,\,j\in I_{N-1},  \,t>0.
 \end{array}
 $$
Note that $\phi_j(\cdot,t)\rightarrow 0$ as $t\rightarrow\infty$. The above equation implies that
 \begin{equation}\label{gamma1equation0}
 \begin{cases}
 \D
 -\phi_j^0(x_j)=\overline{B}_j[(\Gamma_1(\phi^0))_j]-m_j(x_j)(\Gamma_1(\phi^0))_j], & j\in I_{N-1},  \,t>0,\\
 \displaystyle
\Gamma_1(\phi^0) \mbox{ satisfies }  (\ref{bcupstreamzf}),\,
(\ref{bcdownstreamzf}),\, \mbox{
 and } (\ref{interconds}). &
 \end{cases}
 \end{equation}
Therefore, $\Gamma_1(\phi^0)$ is the solution of
 \begin{equation}\label{gamma1equation}
  \begin{cases}
 \D
 -\phi_j^0(x_j)=\overline{B}_ju_j(x_j)-m_j(x_j)u_j(x_j), & x_j\in (0,l_j), \,j\in I_{N-1},\,t>0,\\
 \displaystyle
  u \mbox{ satisfies }  (\ref{bcupstreamzf}),\, (\ref{bcdownstreamzf}),\, \mbox{
 and } (\ref{interconds}).&
 \end{cases}
  \end{equation}
We define
 $$
 T_1: X\rightarrow X,\,\, u=T_1\phi^0,
 $$
where $u$ is the solution of (\ref{gamma1equation}). Then $T_1$ is a compact and strongly positive operator on
$X$. By the definition of $T_1$ and equations (\ref{gamma1equation0}) and (\ref{gamma1equation}), we know that
$\Gamma_1=T_1$ on $X$. Hence, $\Gamma=Q\Gamma_1$ is also a compact and strongly positive operator on $X$.
It follows from  \cite[Theorem 1.2]{Dubook} that $\mathcal{R}_0=r(\Gamma)>0$ is a simple eigenvalue of $\Gamma$ with an
eigenfunction $\phi^\ast\in X^o$, \textit{i.e.},
$$
\Gamma \phi^\ast=\mathcal{R}_0 \phi^\ast,
$$
and there is no other eigenvalues of $\Gamma$ associated
with positive eigenfunctions.

By following the idea in the proof of \cite[Theorem 3.2]{Wang2012}, we can obtain $\mathcal{R}_0$ via the principal eigenvalue of another eigenvalue problem.
\begin{Theorem}\label{R0calwzidea}
If the eigenvalue problem
 \begin{equation}\label{modellineigen22}
  \begin{cases}
 \D
 \mu r_j(x_j)\psi_j(x_j)=-D_j\frac{\partial^2 \psi_j}{\partial x_j^2}+v_j\frac{\partial \psi_j}{\partial x_j}+m_j(x_j)\psi_j,
 &x_j\in (0,l_j),\,j\in I_{N-1},  \\
 \displaystyle
 \alpha_{j,1}\psi_j(e_i)-\beta_{j,1}\frac{\partial \psi_j}{\partial x_j}(e_i)= 0, &\forall e_i\in E_u, \\
 \displaystyle
 \alpha_{j,2}\psi_j(e_i)+\beta_{j,2}\frac{\partial \psi_j}{\partial x_j}(e_i)= 0,&  \forall e_i\in E_d, \\
 \displaystyle
  \psi_{i_1}(e_i)=\cdots=\psi_{i_m}(e_i),\ \
 \sum\limits_{j=i_1}^{i_m} d_{ij}A_jD_j\frac{\partial \psi_j}{\partial x_j} (e_i)=0,&\forall e_i\in E_r,
 \end{cases}
 \end{equation}
admits a unique positive eigenvalue $\mu^0$ with a positive eigenfunction, then $\mathcal{R}_0=1/\mu^0$.
\end{Theorem}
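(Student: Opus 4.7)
The plan is to exploit the factorization $\Gamma=QT_1=-QB^{-1}$ already established before the statement, and to transfer the Krein--Rutman eigenpair of $\Gamma$ into a positive eigenpair of \eqref{modellineigen22} with eigenvalue $1/\mathcal{R}_0$. The uniqueness hypothesis in the theorem then forces $\mu^0=1/\mathcal{R}_0$.

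More concretely, recall from the paragraph preceding the theorem that $\mathcal{R}_0=r(\Gamma)>0$ is a simple eigenvalue of the compact, strongly positive operator $\Gamma$ on $X$, with an eigenfunction $\phi^\ast\in X^o$, so $\Gamma\phi^\ast=\mathcal{R}_0\phi^\ast$. I would set
$$
\psi^\ast:=T_1\phi^\ast=-B^{-1}\phi^\ast.
$$
Since $T_1$ was shown to be compact and strongly positive on $X$, and $\phi^\ast\in X^o\subset X_+$, we get $\psi^\ast\in X^o$, so in particular $\psi^\ast$ is strictly positive in $G\setminus E_0$. Moreover $\psi^\ast$, being the solution of \eqref{gamma1equation} with datum $\phi^\ast$, automatically satisfies the boundary conditions \eqref{bcupstreamzf}, \eqref{bcdownstreamzf} and the interface conditions \eqref{interconds}, so $\psi^\ast$ is admissible for \eqref{modellineigen22}.

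To verify the eigenvalue equation, note that $-B\psi^\ast=\phi^\ast$ by the definition of $T_1$, and hence
$$
Q\psi^\ast=QT_1\phi^\ast=\Gamma\phi^\ast=\mathcal{R}_0\phi^\ast=-\mathcal{R}_0\,B\psi^\ast,
$$
which rearranged edge by edge reads
$$
-D_j\frac{\partial^2\psi_j^\ast}{\partial x_j^2}+v_j\frac{\partial\psi_j^\ast}{\partial x_j}+m_j(x_j)\psi_j^\ast=\frac{1}{\mathcal{R}_0}\,r_j(x_j)\psi_j^\ast,\quad j\in I_{N-1}.
$$
Thus $(1/\mathcal{R}_0,\psi^\ast)$ is a positive eigenvalue--eigenfunction pair of \eqref{modellineigen22}. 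Invoking the uniqueness hypothesis, we conclude $\mu^0=1/\mathcal{R}_0$, i.e., $\mathcal{R}_0=1/\mu^0$.

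The proof is essentially a bookkeeping exercise once the operator factorization $\Gamma=-QB^{-1}$ and the strong positivity of $T_1$ are in hand; the only place where care is needed is to confirm that the transformation $\phi^\ast\mapsto T_1\phi^\ast$ produces a function that is (a) strictly positive on $G\setminus E_0$ and (b) compatible with the junction and boundary conditions of \eqref{modellineigen22}. Both points are already delivered by properties of $T_1$ recorded before the theorem, so I expect no substantive obstacle; the uniqueness assumption in the hypothesis is precisely what sidesteps the otherwise delicate question of whether \eqref{modellineigen22} could admit other positive eigenvalues with sign-changing eigenfunctions on the tree.
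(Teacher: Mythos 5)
Your proof is correct and uses essentially the same mechanism the paper invokes via Wang and Zhao's Theorem 3.2: the factorization $\Gamma=-QB^{-1}$ together with the compactness and strong positivity of $T_1=\Gamma_1$ to pass between positive eigenpairs of $\Gamma$ and of \eqref{modellineigen22}. The only cosmetic difference is the direction of the transfer — you push the Krein--Rutman eigenpair $(\mathcal{R}_0,\phi^\ast)$ of $\Gamma$ forward to the positive eigenpair $(1/\mathcal{R}_0,\,T_1\phi^\ast)$ of \eqref{modellineigen22} and then invoke the theorem's uniqueness hypothesis, whereas the cited argument starts from the positive eigenpair of \eqref{modellineigen22} and produces the positive eigenfunction $Q\psi$ of $\Gamma$, concluding by the Krein--Rutman uniqueness of the positive eigenfunction of $\Gamma$; both routes are sound.
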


To numerically calculate $\mathcal{R}_0$, we use the finite difference method
to discretize (\ref{modellineigen22}) and approximate
(\ref{modellineigen22}) by
$$
\mu \hat{r}\psi=\hat{A}\psi,
$$
where $\hat{r}$ is a diagonal matrix containing values of $r(x_j)$ on the main diagonal and $\hat{A}$ is the discretization of the operator on
the right-hand side of (\ref{modellineigen22}). The matrix $\hat{A}^{-1}\hat{r}$ is a non-negative and irreducible matrix. The  Perron-Frobenius Theorem implies that $\hat{A}^{-1}\hat{r}$ admits a
principal eigenvalue $\varsigma^\ast$, which is the unique simple eigenvalue of $\hat{A}^{-1}\hat{r}$ associated
with a
positive eigenvector $\psi^*$, that is,
\begin{equation}\label{r0approxmationmat}
\hat{A}^{-1}\hat{r}\psi^\ast=\varsigma^\ast\psi^\ast.
\end{equation}
Then we approximate
$1/\mu^0$ by  $\varsigma^\ast$, i.e.,
\begin{equation}\label{r0approxmationmat2}
\mathcal{R}_0 =\frac{1}{\mu^0}\approx\varsigma^\ast
\end{equation}
 by using Theorem \ref{R0calwzidea} and above approximating scheme.

It follows from (\ref{r0approxmationmat}) that $\hat{r}\psi^\ast$ is the eigenvector of $\hat{r}\hat{A}^{-1}$ corresponding to $\varsigma^\ast$, i.e., $\hat{r}\hat{A}^{-1}(\hat{r}\psi^\ast)=\varsigma^\ast(\hat{r}\psi^\ast)$. Note that
the next generation operator $\Gamma $ can be approximated by
$\hat{r}\hat{A}^{-1}$ and that $\mathcal{R}_0=1/\mu^0$.
 The eigenvalue problem $\Gamma \phi^\ast=\mathcal{R}_0 \phi^\ast$ can be approximated by
 $$
  \hat{r}\hat{A}^{-1}\phi^\ast\approx\frac{1}{\mu^0}\phi^\ast\approx\varsigma^\ast\phi^\ast.
  $$
 Then we obtain
  $$
\phi^\ast\approx\hat{r}\psi^\ast.
$$
That is, $\hat{r}\psi^\ast$
can be used to approximate the
eigenfunction of $\Gamma$ associated with the eigenvalue $\mathcal{R}_0$.
We call $
\phi^\ast$ (or $\hat{r}\psi^\ast$ as the approximation) the {\it next generation
distribution} of the population.

\section{The influences of factors on population persistence}\label{simulationsection}

The results in Sections \ref{eigensection} and \ref{netR0section} show that both  the principal eigenvalue $\lambda^\ast$ of the eigenvalue problem (\ref{modellineigen10}) and the net reproductive rate $\mathcal{R}_0$ can be used to determine the population persistence. For the biological significance of $\mathcal{R}_0$, now we apply the theory in Section \ref{netR0section} to investigate the influences of biotic and abiotic factors on population dynamics (in particular, persistence or extinction) of {\bf (IBVP)} via numerical studies
of $\mathcal{R}_0$ and the stable positive steady state (if exists).

Real river networks are complex and the quantitative influence of a factor on the population persistence highly depends on the structure and scales of the network. While the choice of a general river network is random, we consider a few simple but typical river networks of trees with one, three, four, five, and seven branches, representing different types of network topologies, merging from the upstream or splitting into the downstream; see Figure \ref{rivershapesfig}. In particular, river networks (3-a), (3-b), (7-a) and (7-b) are radial trees, which are
rooted trees with all tree features, including edge lengths, sectional
areas, and boundary conditions depending only on the distance to the root (see Section of \cite{Sarhad2014}).

 Three sets of boundary conditions for {\bf (IBVP)} are considered:
\begin{enumerate}
  \item [(ZF-FF)] Zero-flux condition (\ref{bczfex}) at the upstream and free flow (i.e., Neumann) condition (\ref{bcff}) at the downstream;
  \item [(ZF-H)] Zero-flux condition (\ref{bczfex}) at the upstream and hostile condition (\ref{bchostile}) at the downstream;
  \item [(H-H)] Hostile condition  (\ref{bchostile}) at both the upstream and the downstream.
\end{enumerate}
\noindent We adopt the baseline parameters in \cite{Speirs2001} and vary their values to see the influences of different factors. The units of parameters are given in Table \ref{parameterunit}. Note that for simplicity, we choose a constant growth rate $r_j$ on each edge $k_j$, which may result in discontinuity of the growth rate $r(x)$ at the interior vertices of the network. As this assumption can be considered as an approximation of a continuous growth rate in the network, it does not change the essence of our results.

\begin{figure}[t!]
\centering
\includegraphics[height=3.5in, width=5.5in]{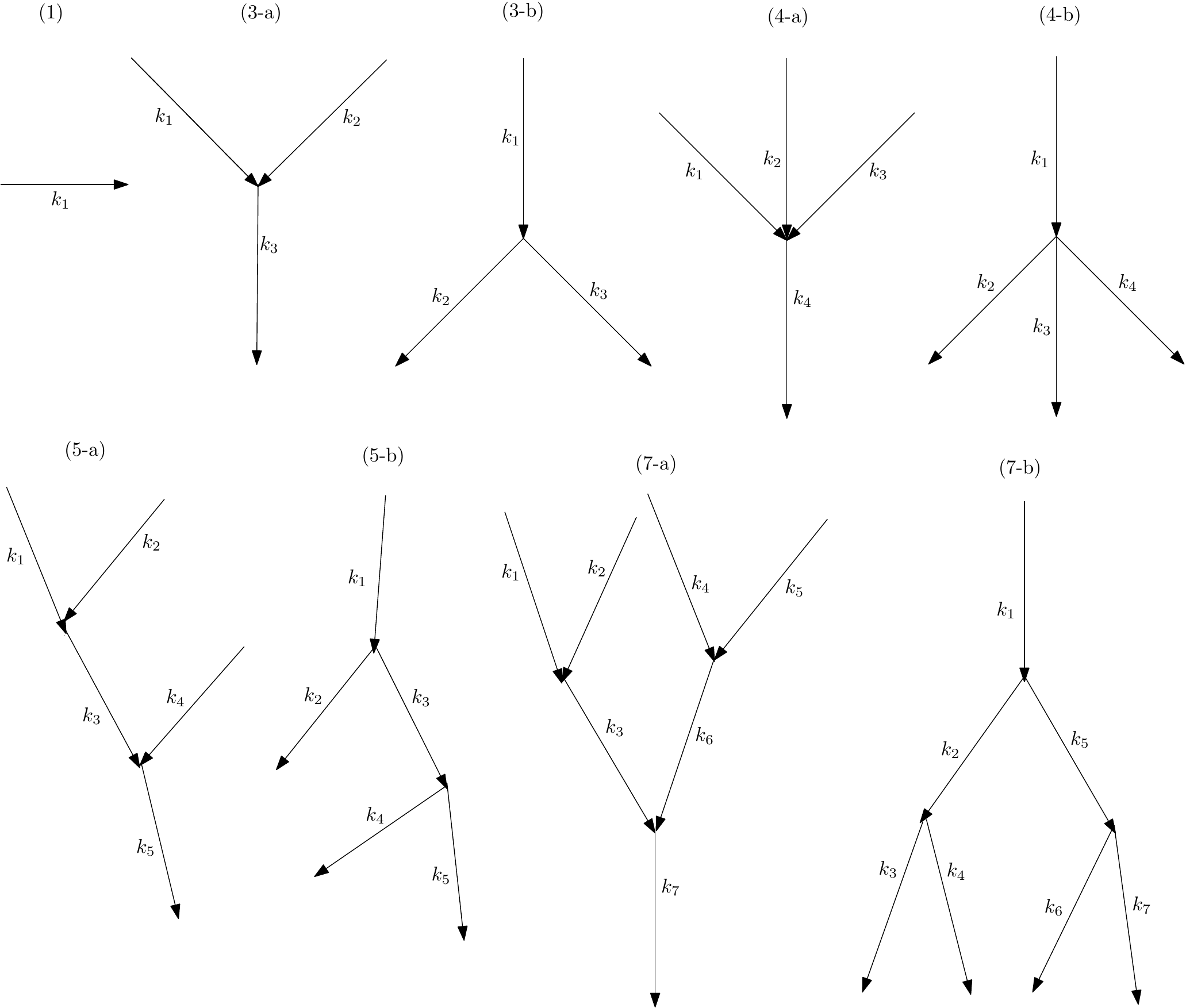}
%
\caption{The river networks with multiple branches that are considered in Section
\ref{simulationsection}. The arrows represent the direction of the water
flow.  The $i$-th branch of the network is represented by $k_i$.
 } \label{rivershapesfig}
\end{figure}

\vspace{0.2cm}

\begin{table}[htp]
\begin{center}
\begin{tabular}{|c|c|c|c|c|c|c|c|c|c|c|c|}
  \hline
  \mbox{Parameter }&$L$ \& $l_j$ &$D_j$ &$v_j$ &$A_j$  & $r_j$&$m_j$ &$Q_j$ &$n_j$ &$B_j$ &$S_{0j}$ &$y_j$\\
  \hline
 \mbox{Unit }& m  &m$^2/$s & m/s &m$^2$ &1/s &1/s &m$^3$/s &s/m$^{1/3}$ &m &m/m&m\\
  \hline

\end{tabular}
\end{center}
\caption{The unit of parameters.}
\label{parameterunit}
\end{table}

\subsection{The influence of the river network structure on population persistence}

Natural rivers are rarely in form of single branches but in various types of networks. The structure (or topology)
of the river network influences hydrodynamics in the network as well as the intrinsic  ecosystem dynamics.

\begin{figure}[t!]
\centering
\includegraphics[height=2.2in,
width=2.5in]{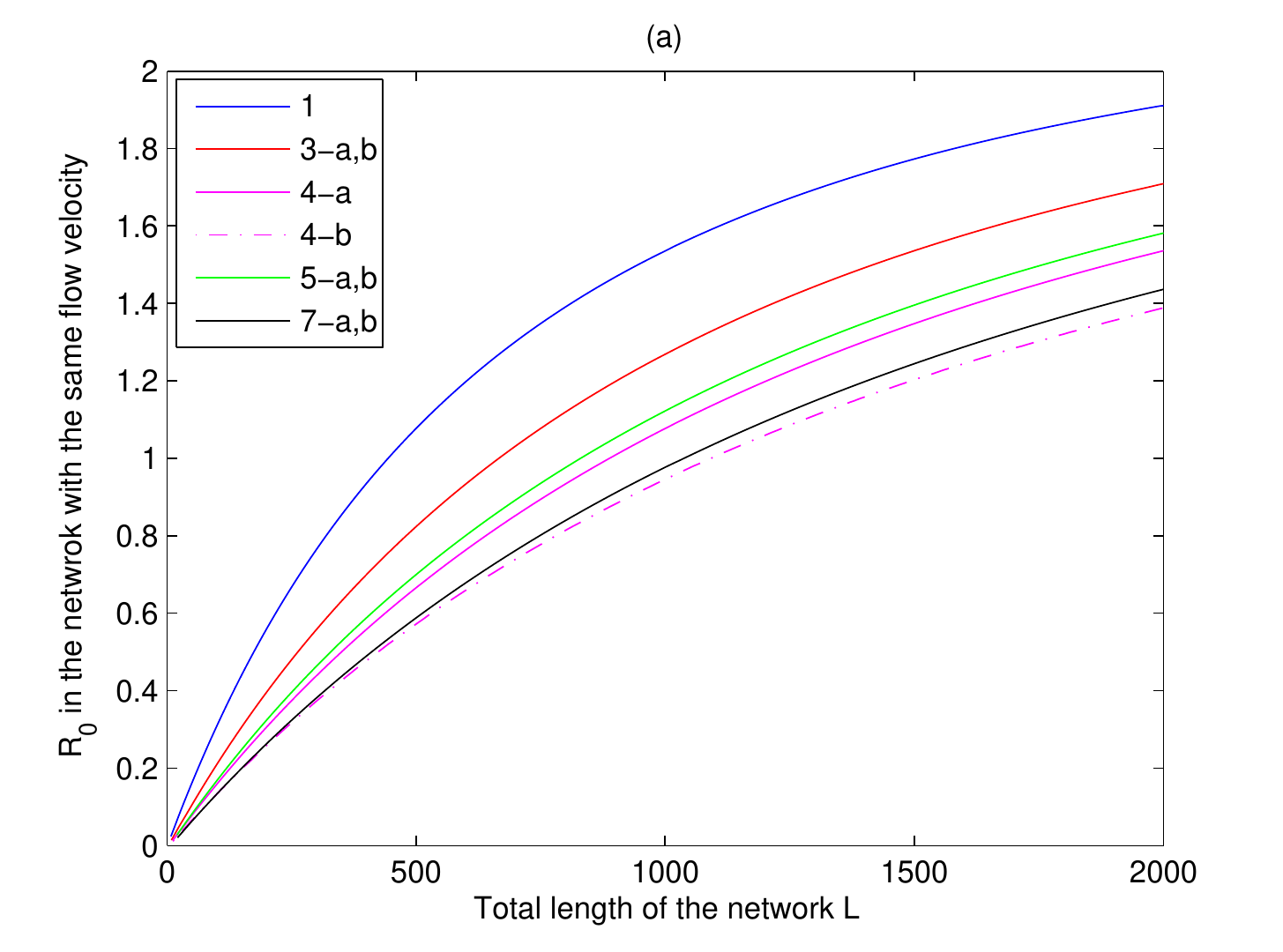}
  \includegraphics[height=2.2in, width=2.5in]{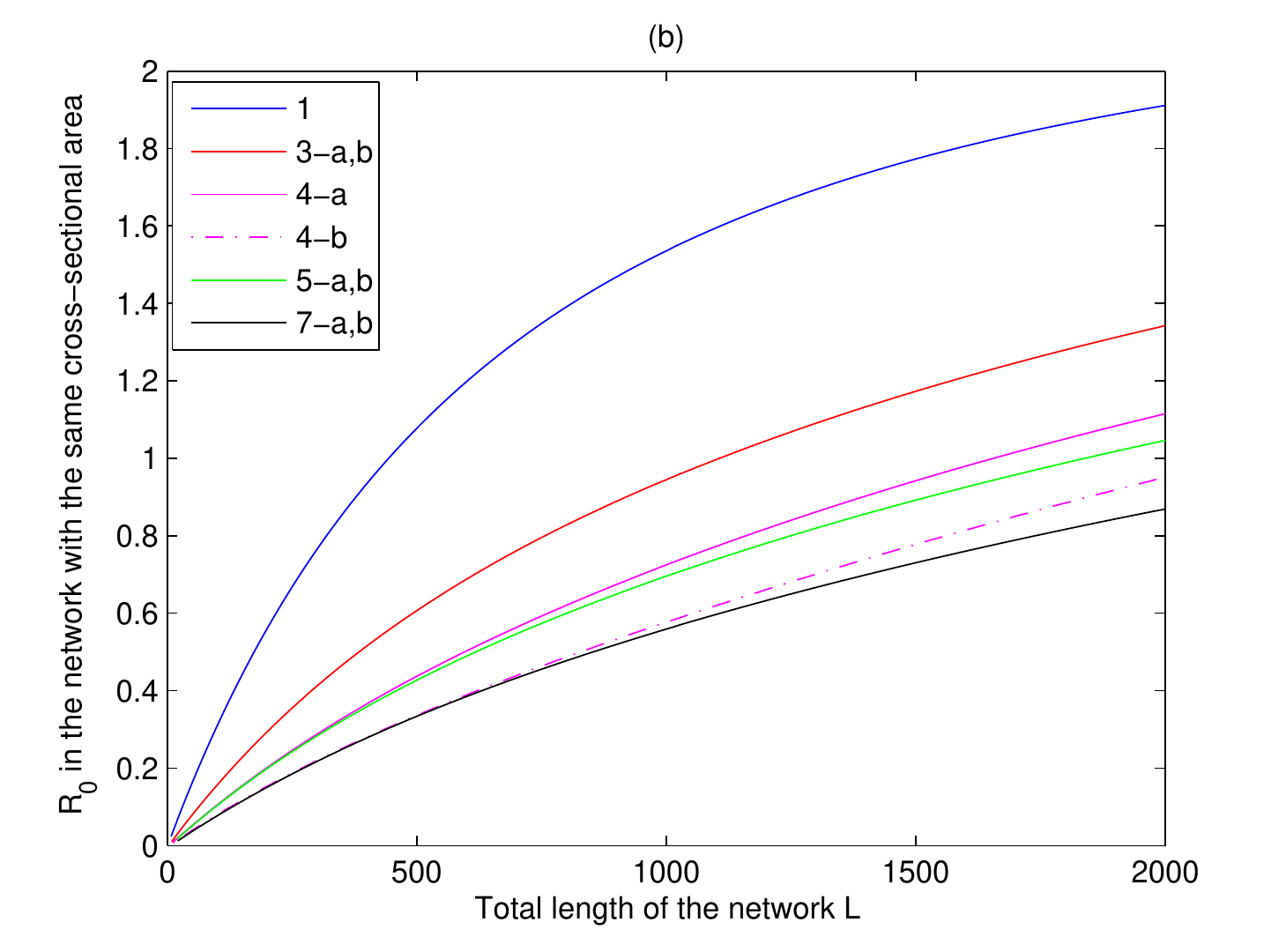}

   \includegraphics[height=2.2in, width=2.5in]{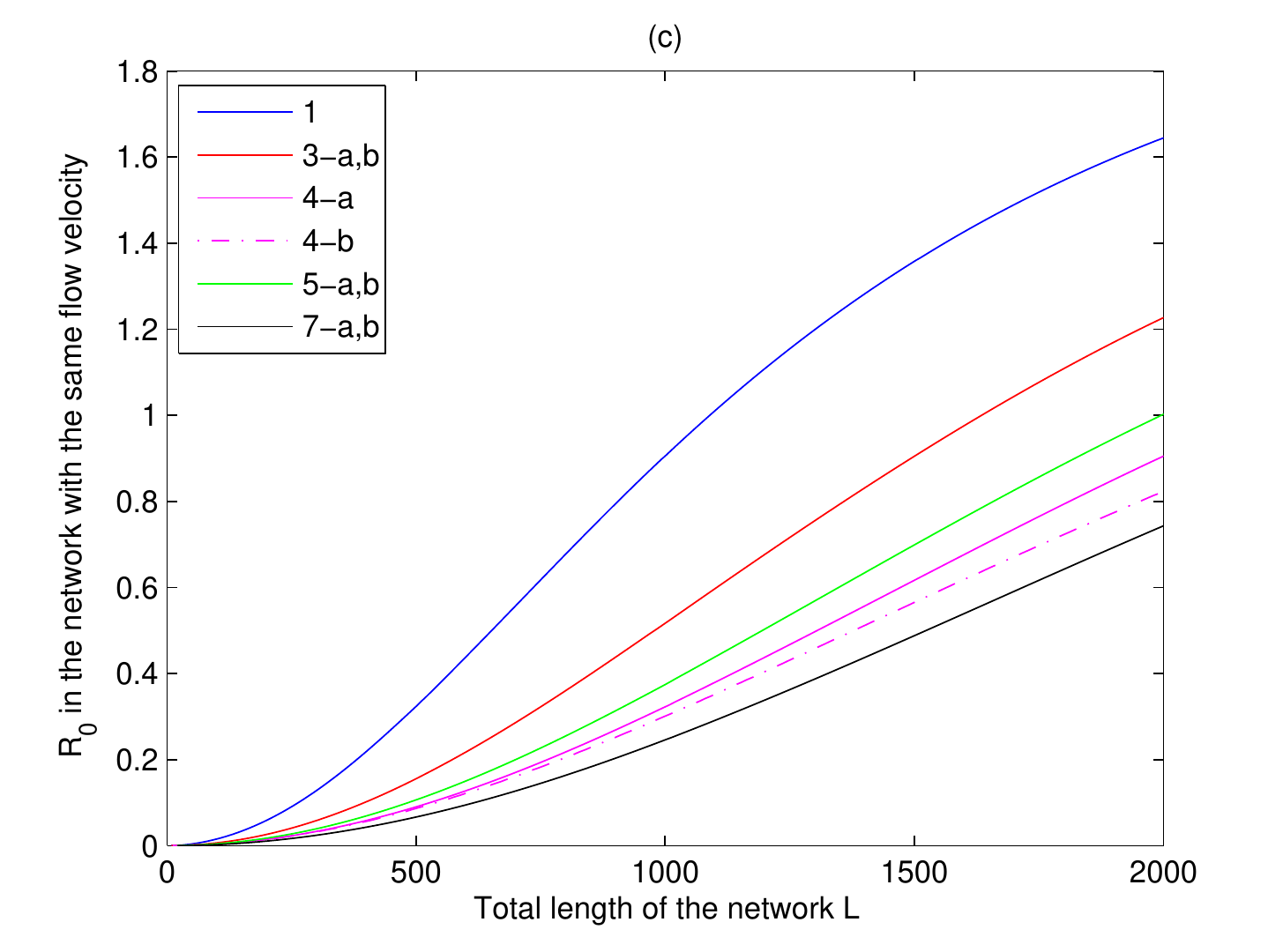}
   \includegraphics[height=2.2in, width=2.5in]{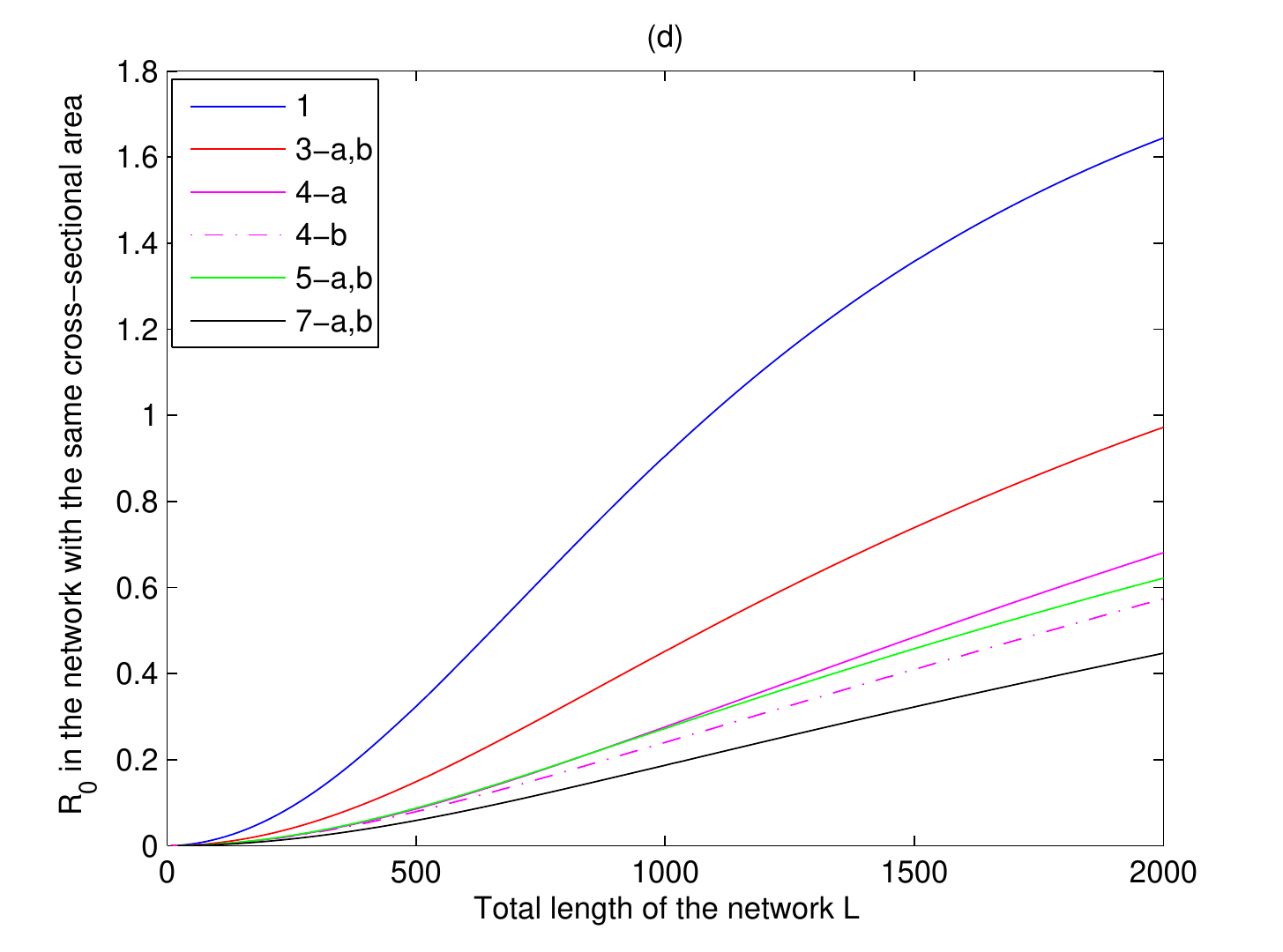}

\includegraphics[height=2.2in,
width=2.5in]{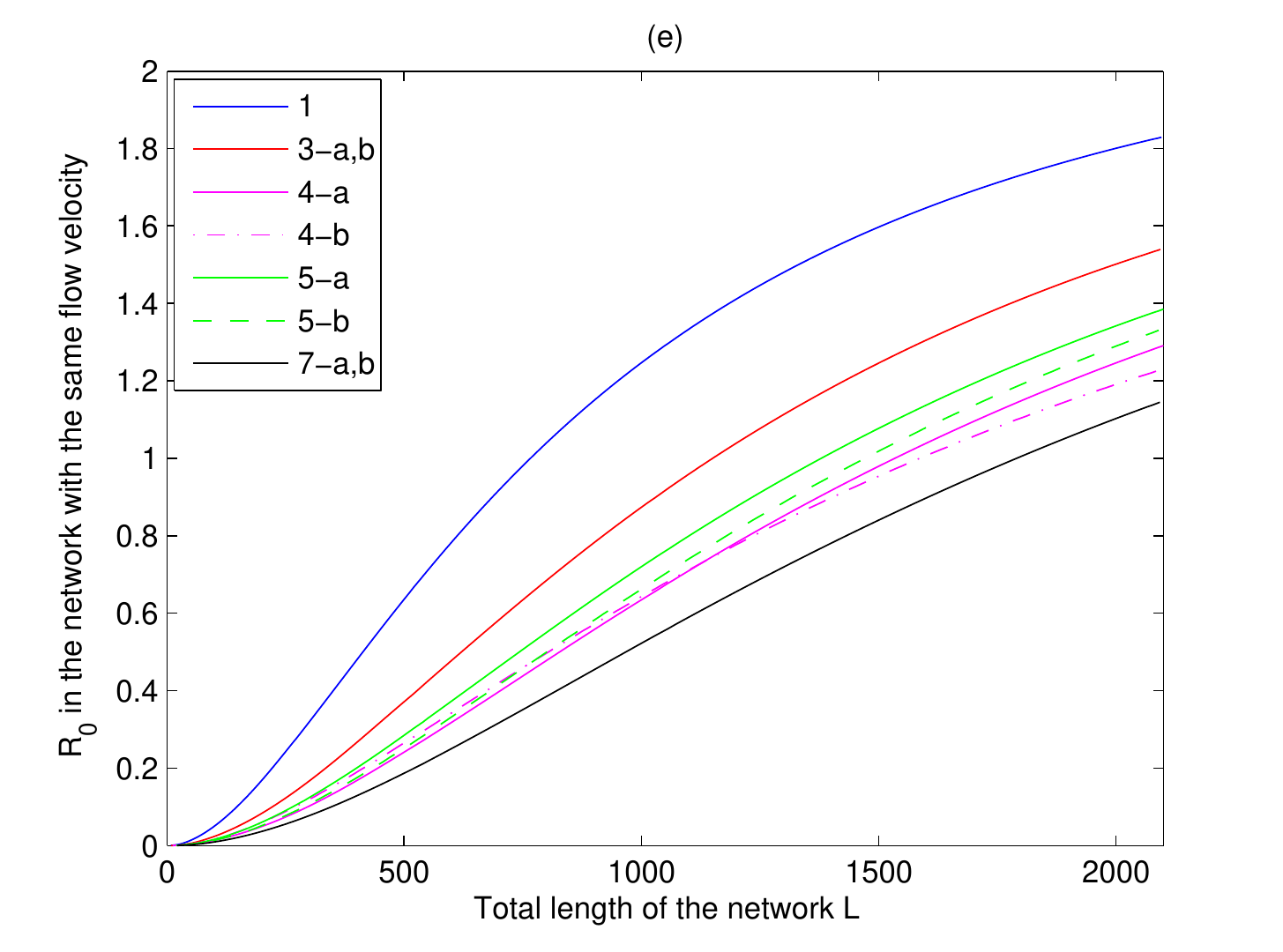}
  \includegraphics[height=2.2in, width=2.5in]{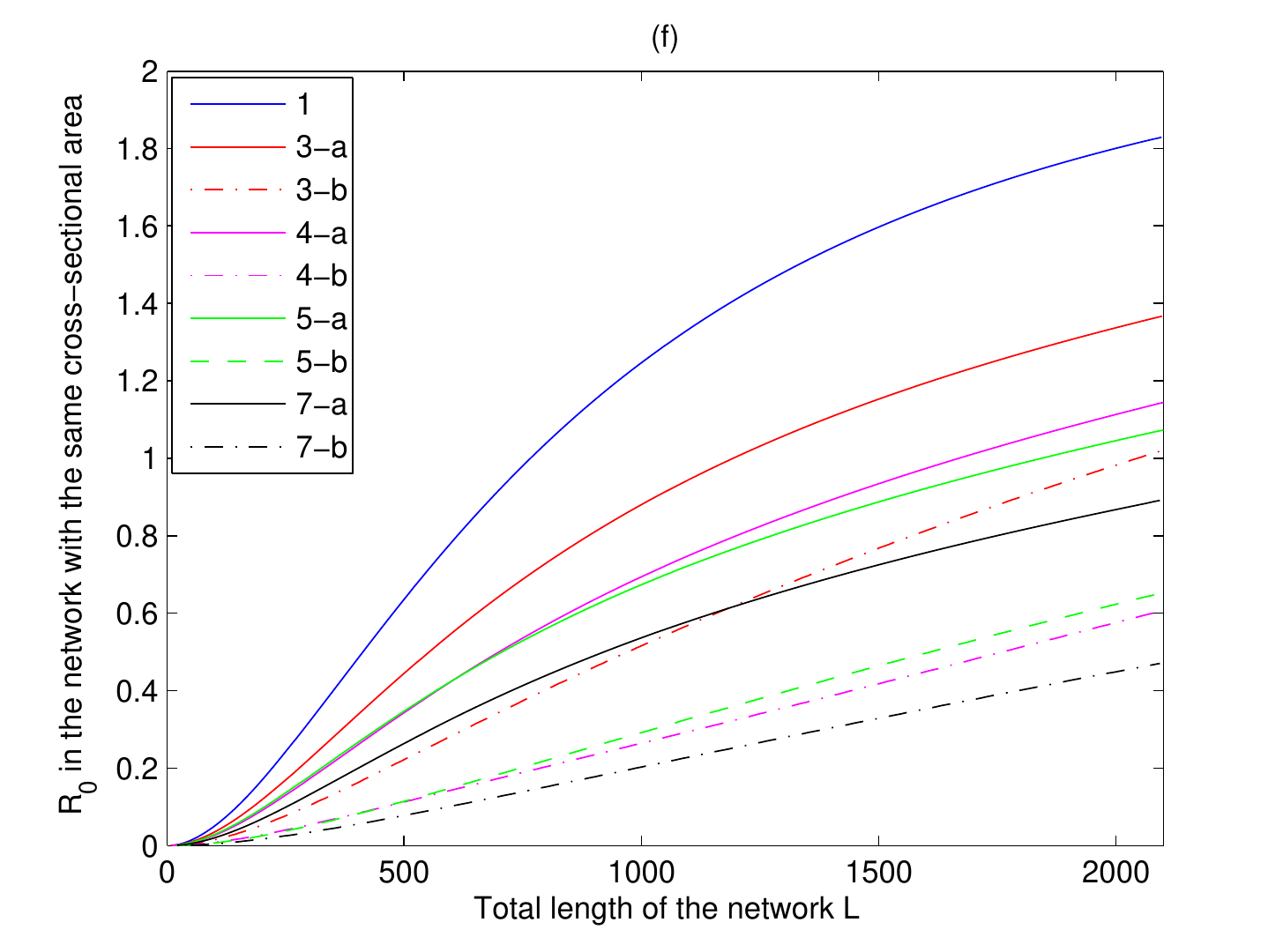}
\caption{The relationship between $\mathcal{R}_0$ and the total length $L$ of
the river network. The curve ``1" represents $\mathcal{R}_0$ in the river of a
single branch. Parameters:  $D_j=0.35$,
   $m_j=0.06/(24\times 3600)$,
  $r_j=0.45/(24\times3600)$. In (a, c, e), the flow advection rate is the same in the
network $v_j=0.0015$; the cross-sectional area is $A_j=1$ in the
upstream branches before merging (e.g., $A_1=A_2=1$ in (3-a)) or in
the downstream branches after splitting  (e.g., $A_2=A_3=1$ in
(3-b)). In (b, d, f), the cross-sectional area is the same in the network
$A_j=1$;  the flow advection rate is $v_j=0.0015$ in the upstream
branches before merging (e.g., $v_1=v_2=0.0015$ in (3-a)) or in the
downstream branches after splitting  (e.g., $v_2=v_3=0.0015$  in
(3-b)). In (a, b), (ZF-FF) boundary conditions are applied. In
(c, d), (H-H) boundary conditions are applied. In (e, f), (ZF-H) boundary conditions are applied.
 } \label{sb1b2k12region}
\end{figure}
 \begin{figure}[t!]
\centering
\subfigure[]{ \label{left}
\hspace{-0.6cm}
\includegraphics[height=2.5in,
width=2.8in]{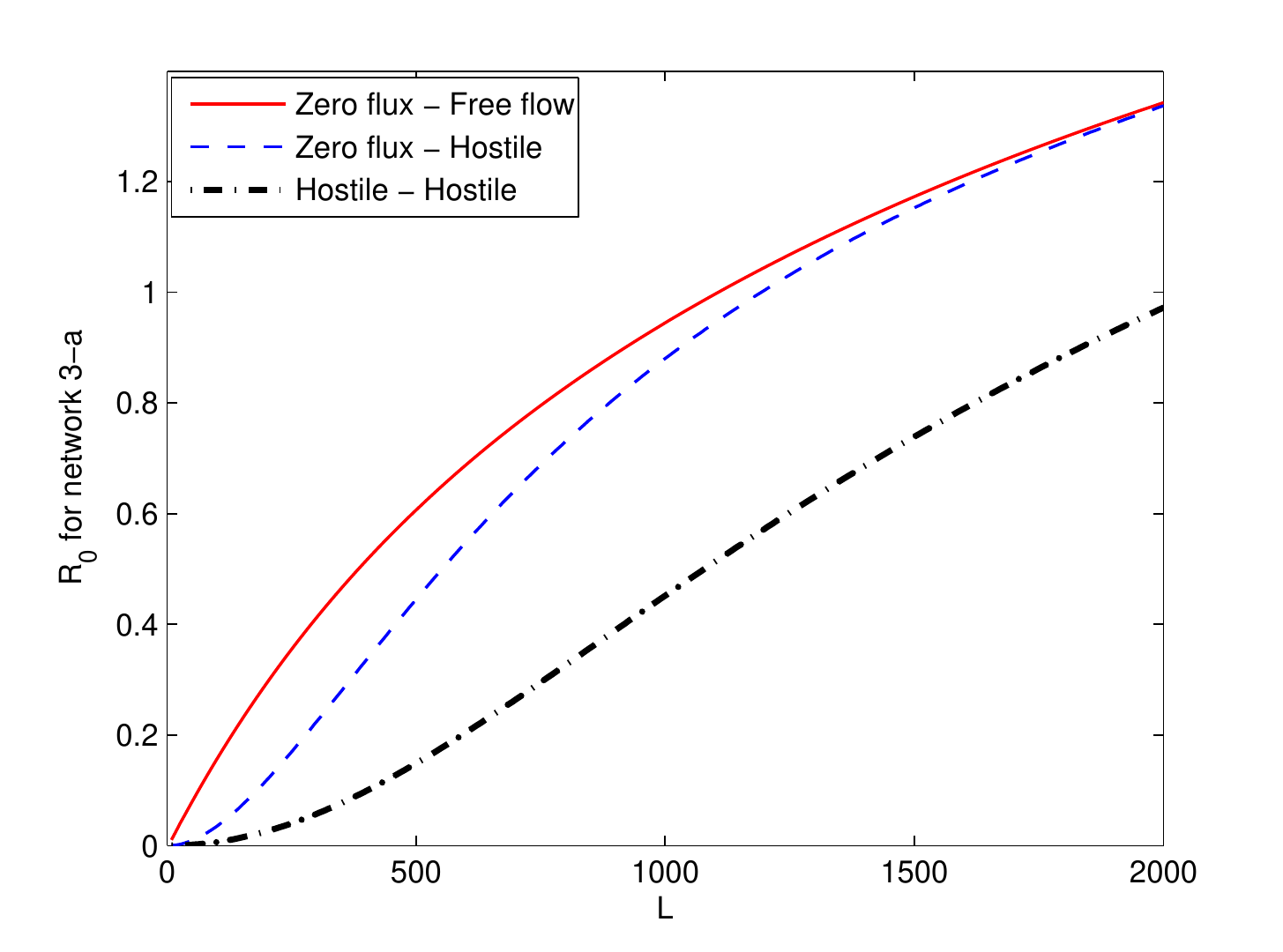}}
\hspace{-0.55cm}
  \subfigure[]{   \label{right}\includegraphics[height=2.5in, width=2.8in]{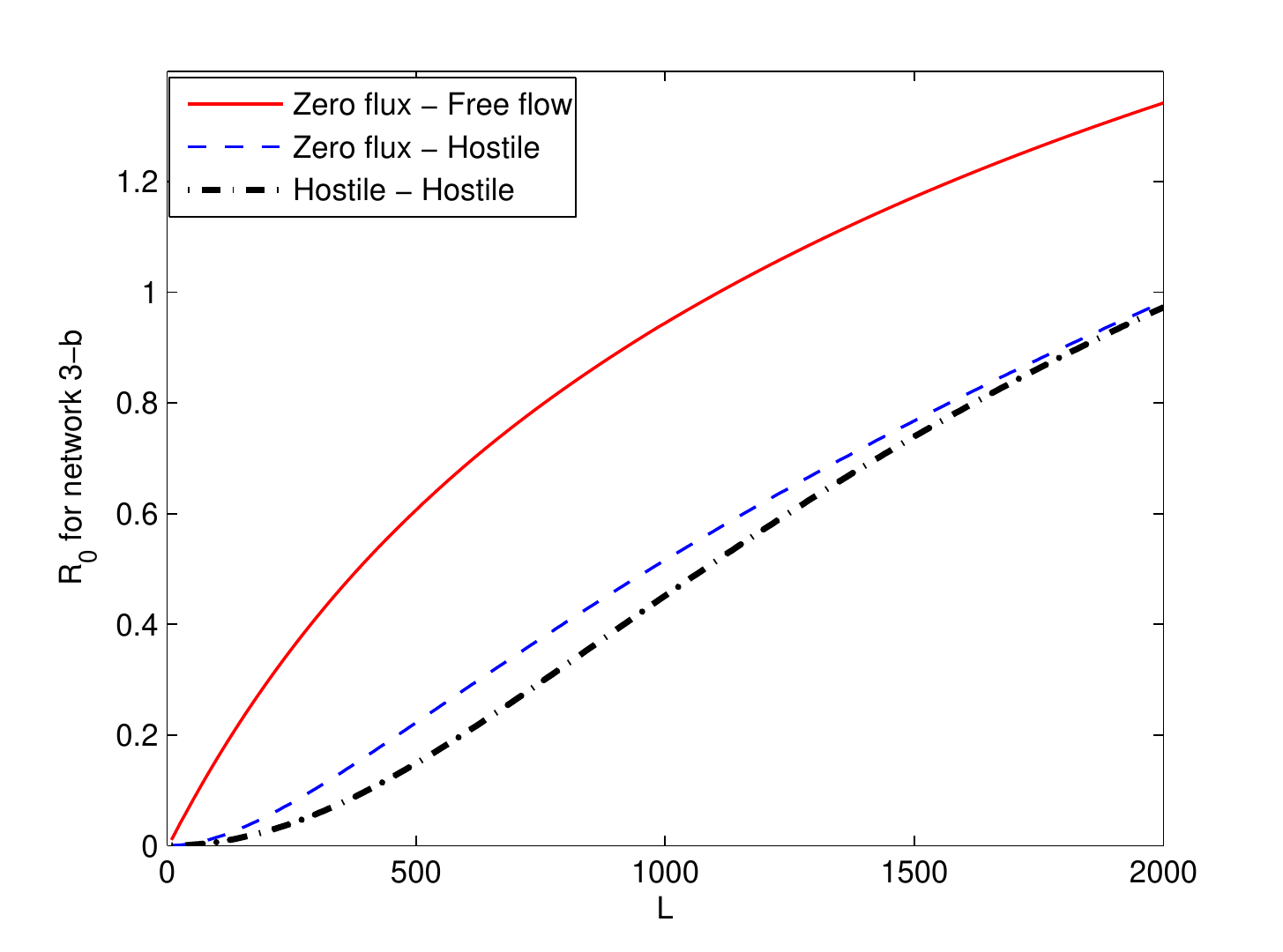}}
\caption{The relationship between $\mathcal{R}_0$ and the total length $L$ of
the river networks (3-a) and (3-b), under different boundary
conditions. Parameters are the same as for Figure
\ref{sb1b2k12region}.
 } \label{sb1b2k12region3bc}
 \end{figure}

To see how the network structure influences population persistence, we compare the values of $\mathcal{R}_0$ in a single branch river and in all river networks in Figure \ref{rivershapesfig}.
Suppose that the growth rate and the diffusion rate do not change throughout each network. We vary the flow conditions by fixing the advection rates but varying the cross-sectional areas or fixing the cross-sectional areas but varying the advection rates, according to the conservation relation (\ref{conservationflow}) of the flow at the interior junctions.

\subsubsection{The influence of the total length of the network}
In a specific network structure, assume that the lengths of all branches are
the same and vary the total length of the network. Figure
\ref{sb1b2k12region}  shows that when the
total length increases, the net reproductive rate $\mathcal{R}_0$ increases for
all  the networks in consideration. This coincides with the well-known result in
one-dimensional river (see e.g., \cite{Jin2011,Lutcherlewis2005}):
given the same habitat conditions, increasing the total habitat size helps population persistence.

Figure \ref{sb1b2k12region} also shows that in radial trees ((3-a), (3-b), (7-a), and (7-b) in Figure \ref{rivershapesfig}), when the total length of the network is fixed, increasing the number of levels reduces the value of  $\mathcal{R}_0$.
It has been shown in \cite{Sarhad2014} that population dynamics on a radial tree is equivalent to that in a one-dimensional river formed from one upstream to one downstream of the tree. Our observation coincides with this result in \cite{Sarhad2014} since a radial tree of $3$ branches with total length $L$ is equivalent to a one-dimensional river of length $2L/3$, while a radial tree of $7$ branches with total length  $L$ is equivalent to a one-dimensional river of length $3L/7$.
However, one cannot conclude a general result from this that in river networks, when the total river length is fixed, the networks with more branches have smaller net reproductive rates. In the non-radial networks (4-a,b) and (5-a,b), when the flow velocity is fixed, the net reproductive rates of networks (5-a) and (5-b) are larger than those of networks (4-a) and (4-b); see Figure \ref{sb1b2k12region} (a,c). Networks with $5$ branches also have larger $\mathcal{R}_0$ than those with $4$ branches when the cross-sectional areas are fixed and the total river lengths are small; see Figure \ref{sb1b2k12region} (b,d).

\subsubsection{The influence of boundary conditions}

Figure \ref{sb1b2k12region} shows the relationship between the total river network length $L$ and the net reproductive rate $\mathcal{R}_0$ under different boundary conditions.
When (ZF-FF) or (H-H) boundary conditions are applied, the same number of branches merging from the upstream or splitting into the downstream lead to the same net reproductive rate in radial trees (3-a,b) and (7-a,b) as well as in (5-a,b). That is, when both boundary conditions are bad (hostile) or not bad (not hostile), more branches in the upstream or in the downstream does not change the net reproductive rate, provided that two branches merge into one or one branch splits into two at each junction point. Nevertheless, when (ZF-H) boundary conditions are applied, if the total branch numbers are the same, the net reproductive rate of the network merging from the upstream is not less than  the one of the network splitting into the downstream, in (3-a,b), (5-a,b) or (7-a,b). That is, when the downstream boundary condition is bad (hostile), more branches in the downstream cannot result in a  larger net reproductive rate, provided that two branches merge into or are split from one branch at each junction point.

Among all the networks, the $4$-branch networks are exceptional. When (ZF-FF) or (H-H) boundary conditions are applied, $4$ branches merging from the upstream result in a larger $\mathcal{R}_0$ than splitting into the downstream. Nevertheless, when (ZF-H) boundary conditions are applied and the total river length is small, the network with $4$ branches merging from the upstream may have a smaller net reproductive rate than the one with $4$ branches splitting into the downstream if the advection rate is fixed.

Overall, in all these types of river networks with equal branch length, having more upstream branches helps the population persistence or at least does  not accelerate the population extinction, provided that the upstream ends are not the only boundaries that are subjected to hostile conditions and that the total network length is sufficiently large.

To see the effect of boundary conditions on the net reproductive rate more closely, we focus on the $R_0$ values in networks with $3$ branches in Figure \ref{sb1b2k12region3bc}. It shows that $\mathcal{R}_0$ under zero-flux condition at the upstream and free flow condition at the downstream is the largest and the one under hostile condition at both ends is the smallest. When the total length of the
network is small,  $\mathcal{R}_0$ under hostile boundary conditions is much lower since individuals are close to the  hostile boundary and are subjected heavy stress of being removed from the system. When the total length of the network is large, the hostile condition  at the downstream does not make much difference on $\mathcal{R}_0$ compared to the free flow condition at the downstream if more branches are at the upstream, and the upstream conditions do not influence $\mathcal{R}_0$ very much if more branches are at the downstream with hostile condition. Therefore, both the boundary conditions and the type of river network affect the net reproductive rate.

\subsection{Population persistence in  uniform flows}

Hydrological and physical factors in a real river are closely related  to each other (see e.g., \cite{Chaudhrybook,JinLewisBMB}).
To better see how the population persistence is influenced by physical, hydrological and biological conditions, we incorporate the explicit relation between hydrological and physical factors into the population model and investigate their effect on the net reproductive rate and the positive steady state (if exists).

Assume that in the network each branch $k_j$ has a constant bottom slope $S_{0j}$, a constant bottom Manning roughness $n_j$, and rectangular cross sections with a constant width $B_j$. We further assume that the water flow is at the steady state with flow discharge $Q_j$ in $k_j$, hence, there is a uniform flow in $k_j$ \cite{Chaudhrybook}. As a result,  the water depth $y_j$ in $k_j$ can be estimated by the normal depth defined in (\ref{normaldepthq}) in Appendix \ref{appflow},  that is,
\begin{equation}\label{velocityeqn}
y_j=\left(\frac{Q_j^2 n_j^2}{B_j^2 S_{0j} k^2}\right)^{\frac{3}{10}}, 
\end{equation}
which yields the  flow velocity in  branch $k_j$ as
\begin{equation}\label{velocityeqn}
v_j=\frac{Q_j}{A_j}=\frac{Q_j}{B_jy_j}.
\end{equation}
We then substitute (\ref{velocityeqn}) into (\ref{model}) to study how parameters influence population persistence in uniform flows in river networks where two or three branches merge into one branch (i.e., (3-a) and (4-a) in Figure \ref{rivershapesfig}).

\begin{figure}[t!]
\centering
\subfigure[]{ \label{left}
\includegraphics[height=2.5in,
width=2.8in]{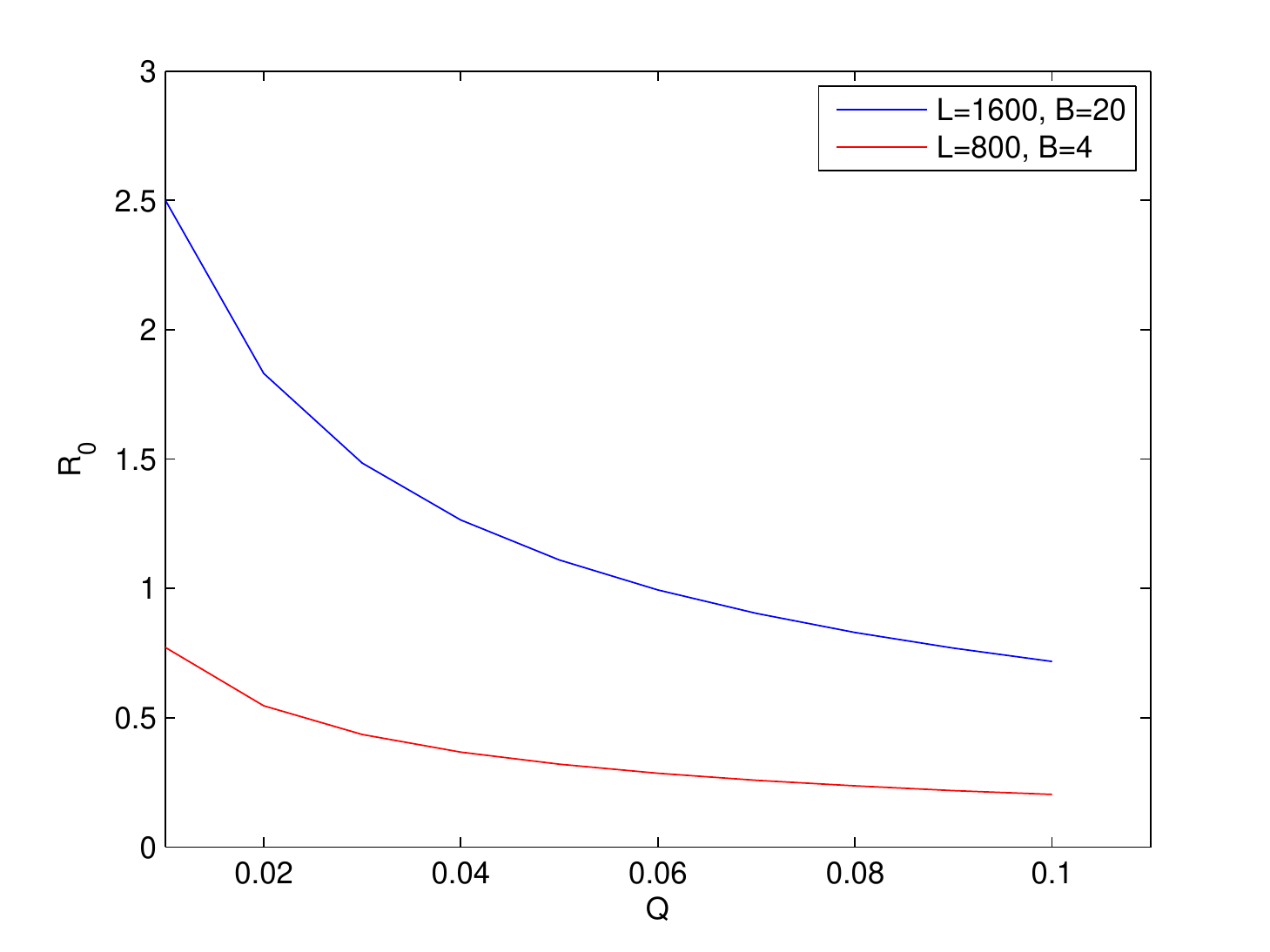}}
  \subfigure[]{   \label{right}\includegraphics[height=2.5in, width=2.8in]{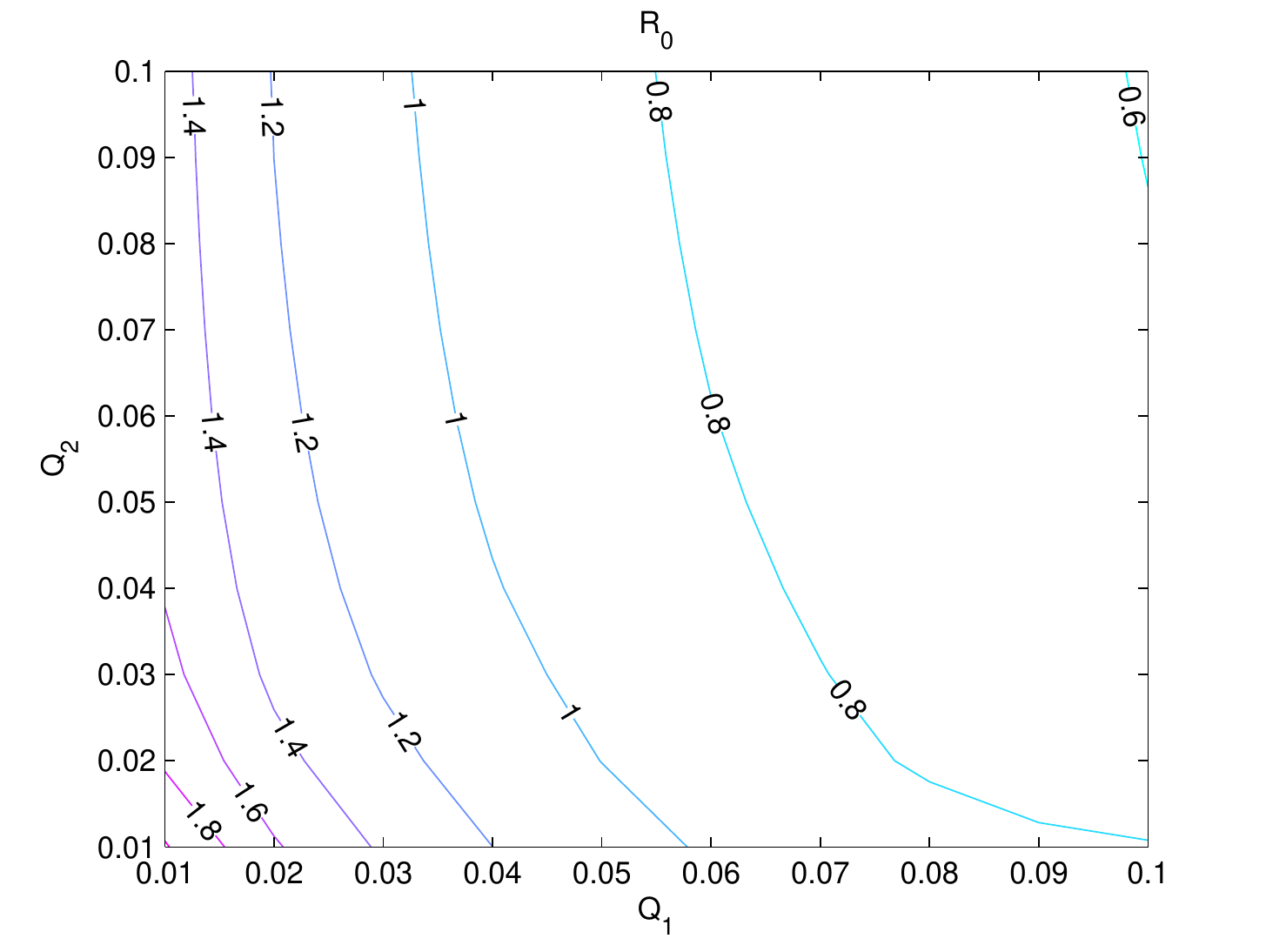}}
\caption{The relationship between $\mathcal{R}_0$ and the flow discharge.
Parameters:  $D_j=0.6$,
   $m_j=0.06/(24\times 3600)$,
  $r_j=0.8/(24\times3600)$,
  $n_j=0.2$, $S_{0j}=0.000001$.
(a): in two single branch rivers. (b): in the network (3-a),
 the curves are contour lines for $\mathcal{R}_0$; other parameters are $l_j=800$,
   $B_1=B_3=20$, $B_2=4$. (ZF-FF) boundary conditions are applied.
 } \label{Q1Q2R0}
\end{figure}

\subsubsection{The influence of the flow discharges on $\mathcal{R}_0$ }

Firstly we consider two isolated river branches, a longer and wider one with length $l_1=1600$ and width $B_1=20$,  a shorter and narrower one with length $l_2=800$ and width $B_2=4$, both with the same bottom slope $S_{0j}=0.000001$ and bottom roughness $n_j=0.2$.
Figure \ref{Q1Q2R0}(a) shows the relationship between the net reproductive rate $\mathcal{R}_0$ and the flow discharge in each river, when the diffusion rate, birth and death rates are the same in both rivers and (ZF-FF) boundary conditions are applied.  $\mathcal{R}_0$ decreases in both rivers as the upstream flow discharge $Q$ increases, but it is larger in the longer and wider river than in the shorter and narrower river.

Now we consider a river network of type (3-a), which is the result of the above small river merging into the large river at the midpoint of the large river. We use subscript $_1$ for parameters in the large river branch and subscript $_2$ for parameters in the small river branch.  Figure \ref{Q1Q2R0}(b)  shows the dependence of $\mathcal{R}_0$ on the flow discharges $Q_1$ and $Q_2$ at the upstream of both river branches. Clearly, $\mathcal{R}_0$ is large when the upstream flow discharges $Q_1$ and $Q_2$ are both small and $\mathcal{R}_0$ is small when both $Q_1$ and $Q_2$ are large. Hence, in a river network, it is still true that low upstream flow discharges help the population persistence and high upstream flow discharges accelerate the population extinction. Moreover, for the parameters we choose, varying $Q_1$ changes $\mathcal{R}_0$ more than varying $Q_2$. That is, given the same habitat and demography conditions, in a merging river network, the upstream flow discharge in the large river influences the global population persistence/extinction more than the upstream flow discharge in the small river. Note that in our case, the population will be extinct in the small river if isolated (see the red curve in Figure \ref{Q1Q2R0}(a)), but merging the rivers into a network helps the population to persist in the whole network, provided that the population can persist in the isolated large river.

\begin{figure}[t!]
\centering
\subfigure[]{ \label{left}
\includegraphics[height=2.5in,
width=2.8in]{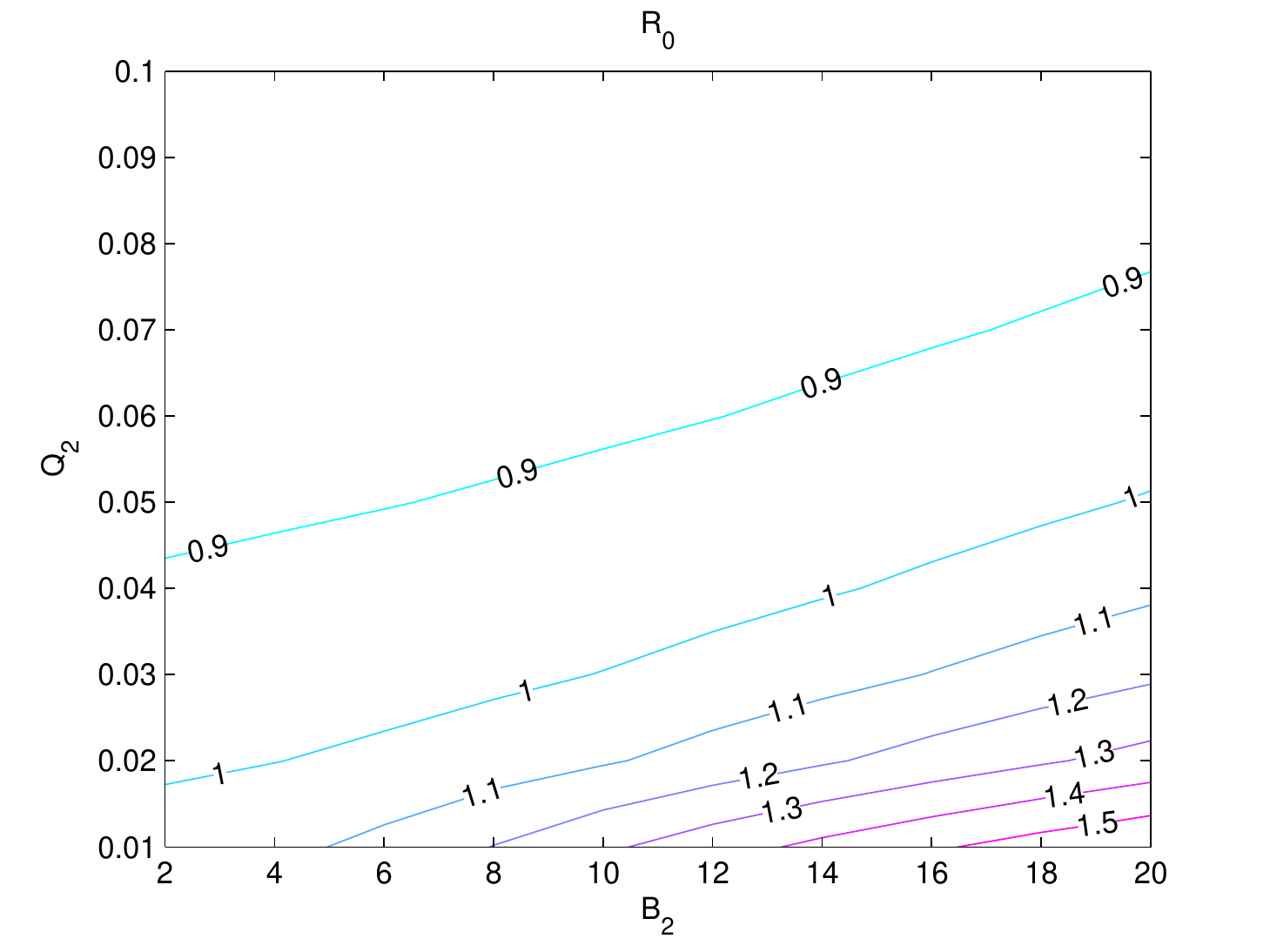}}
  \subfigure[]{   \label{right}\includegraphics[height=2.5in, width=2.8in]{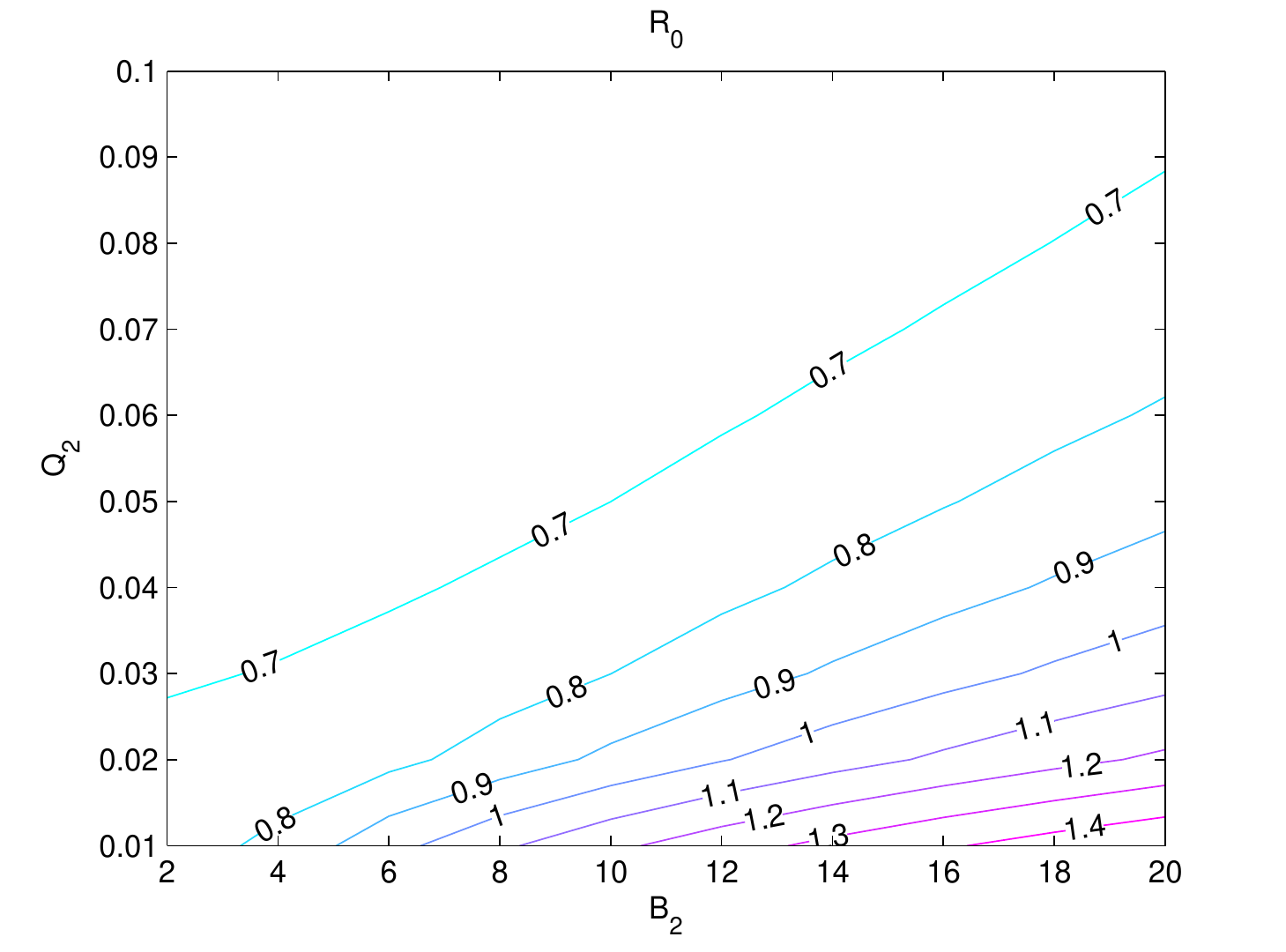}}
\caption{The relationship between $\mathcal{R}_0$ and the flow discharge $Q_2$,
the width $B_2$ of  the branch $k_2$ in network (3-a). The curves are
contour lines for $\mathcal{R}_0$. Parameters:  $D_j=0.6$,
   $m_j=0.06/(24\times 3600)$,
  $r_j=0.8/(24\times3600)$,
  $n_j=0.2$, $S_{0j}=0.000001$,
    $l_j=800$, $B_1=B_3=20$.
    (a):  $Q_1=0.05$;
    (b):  $Q_1=0.09$. Note that in the isolated large river, $\mathcal{R}_0=1.109$ when $Q_1=0.05$ and $\mathcal{R}_0=0.768$ when $Q_1=0.09$. (ZF-FF) boundary conditions are applied.
     } \label{B2Q2R0}
\end{figure}
\begin{figure}[t!]
\centering
\subfigure[]{ \label{left}\includegraphics[height=2.5in, width=2.8in]{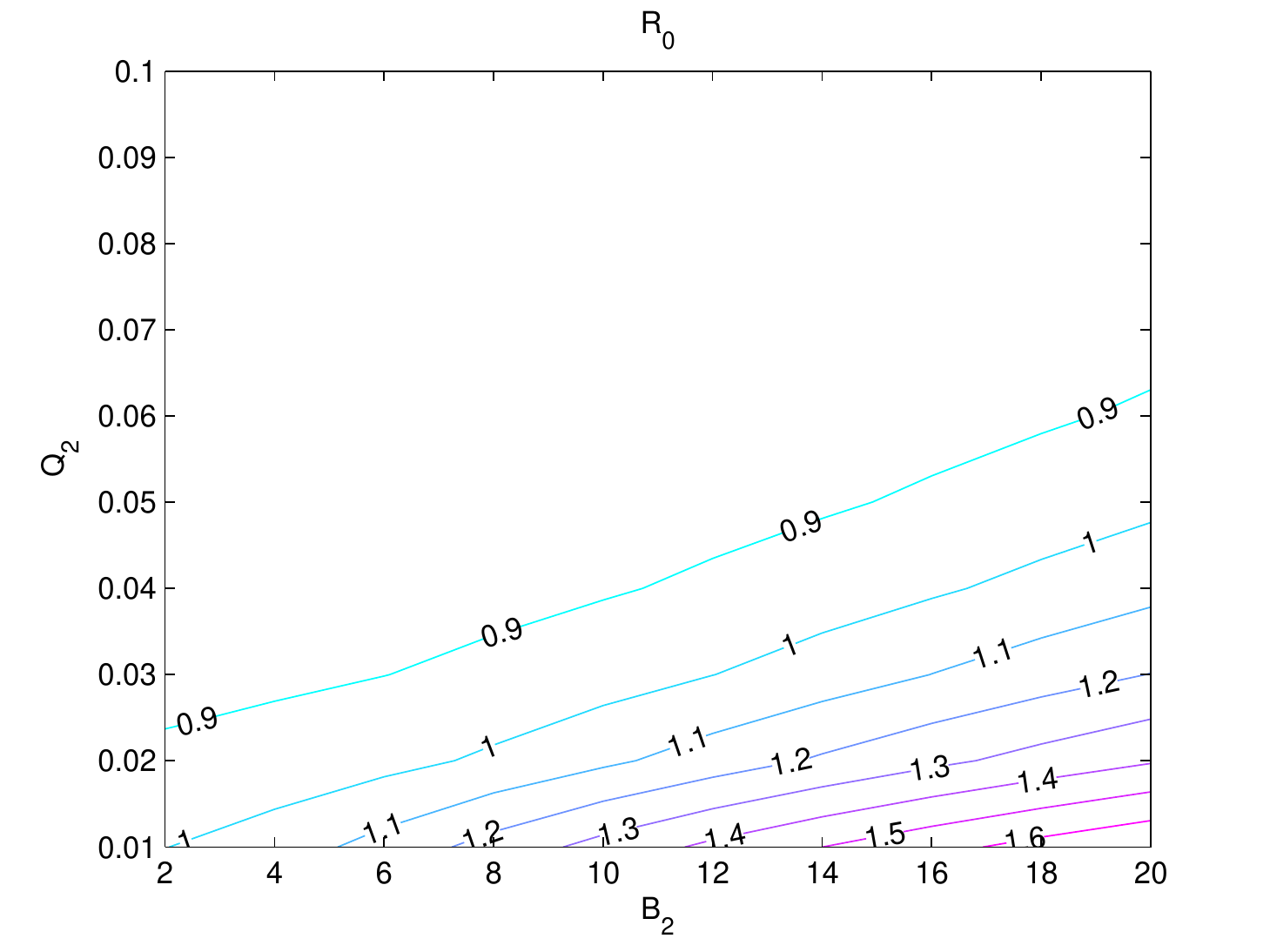}}
\subfigure[]{ \label{right}
\includegraphics[height=2.5in, width=2.8in]{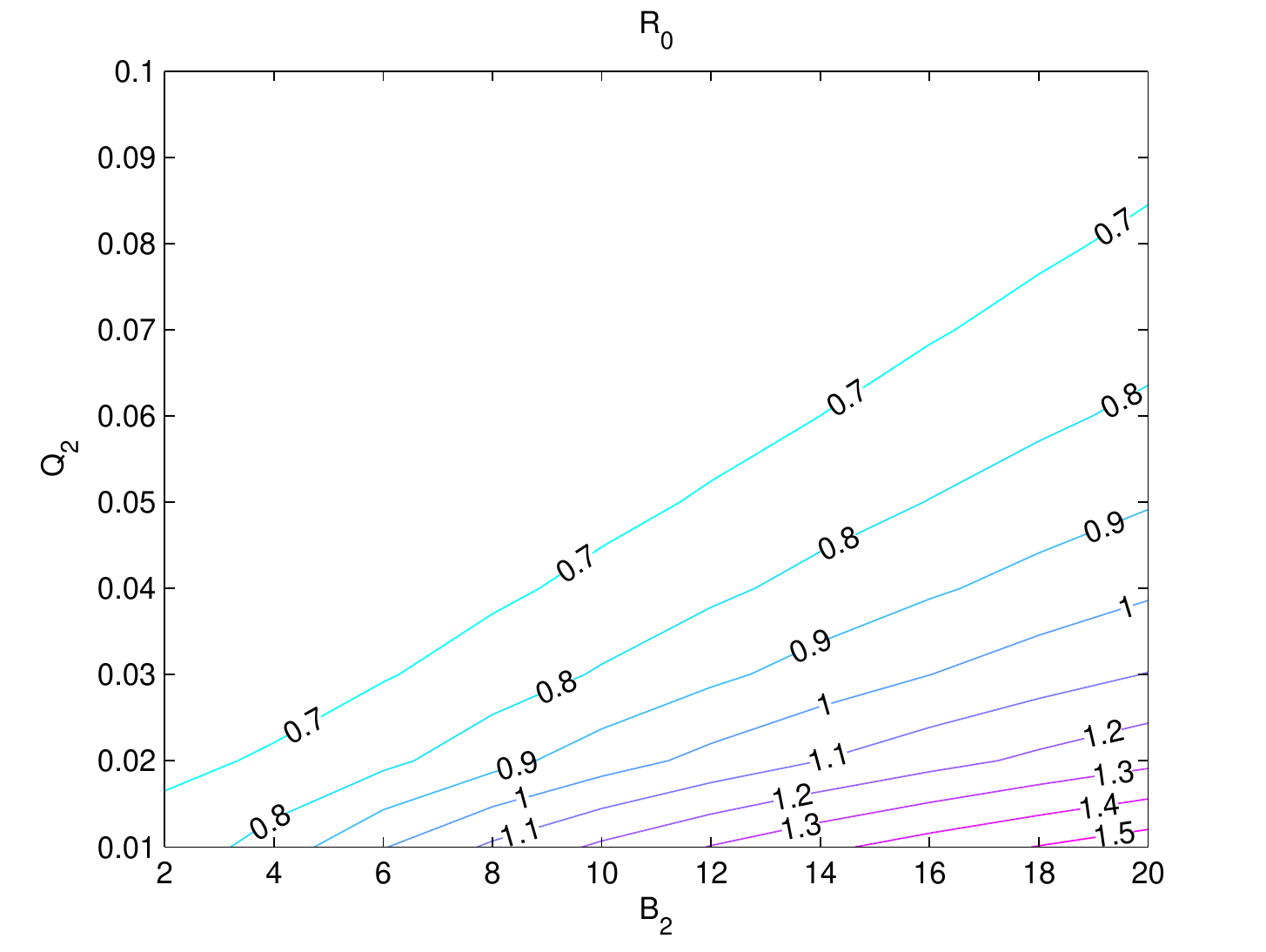}}
\caption{The relationship between $\mathcal{R}_0$ and the flow discharge and the width of the branch $k_2$ and $k_3$ of network (4-a). Parameters:  $D_j=0.6$,
   $m_j=0.06/(24\times 3600)$,
  $r_j=0.8/(24\times3600)$,
  $n_j=0.2$, $S_{0j}=0.000001$,
   $B_1=B_4=20$, $B_2=B_3$, $Q_2=Q_3$. (a): $Q_1=0.05$; (b): $Q_1=0.09$. (ZF-FF) boundary conditions are applied.
     } \label{B2Q2R0-2}
\end{figure}

\subsubsection{The influence of the flow discharges and the widths on $\mathcal{R}_0$}

We continue to consider river networks as the result of merging large and small rivers.
In networks of type (3-a), assume that the large river is given and that the conditions in the small upstream branch vary.
Figure \ref{B2Q2R0} shows the dependence of $\mathcal{R}_0$ on the upstream flow discharge $Q_2$ and the width $B_2$ of the small river branch, in the cases where the population can persist in the isolated large river (Figure \ref{B2Q2R0}(a)) and where the population will be extinct in the isolated large river (Figure \ref{B2Q2R0}(b)). Both figures show the same phenomenon: to increase $\mathcal{R}_0$ or help the population persistence in the whole network, it is necessary to have small upstream flow discharge and large width in the small river, which essentially means that the flow discharge per unit width $Q_2/B_2$ should be low in the small river. If the persistence conditions in the large river become worse (e.g., from Figure \ref{B2Q2R0}(a) to Figure \ref{B2Q2R0}(b)), then lower  $Q_2/B_2$ in the small branch is required to help the population persistence, i.e.,  in order for $\mathcal{R}_0>1$, in the whole network.

In the case where two small rivers with identical conditions merge into a large river and result in a network of type (4-a), the dependence of $\mathcal{R}_0$ and the flow discharge and the width of the small rivers is shown in Figure \ref{B2Q2R0-2}. Similarly, low upstream discharge and large width in the small rivers help the population persistence in the whole network. Comparing Figure \ref{B2Q2R0}(a) and Figure \ref{B2Q2R0-2}, we see that when more small rivers merge into a large river, lower flow discharge per unit width $Q_2/B_2$ is needed to help the population persistence, i.e., for $\mathcal{R}_0$ to go beyond $1$. However, once the conditions are good enough for population to persist, $\mathcal{R}_0$ is larger in the larger network (i.e., (4-a)) than that in the smaller network (i.e., (3-a)).
In general, this indicates that the threshold conditions for population persistence in a large network may be stronger than those in a small network, but population grows better in the large network once it persists.

\begin{figure}[t!]
\centering
\subfigure[]{ \label{left}
\includegraphics[height=2.5in,
width=2.8in]{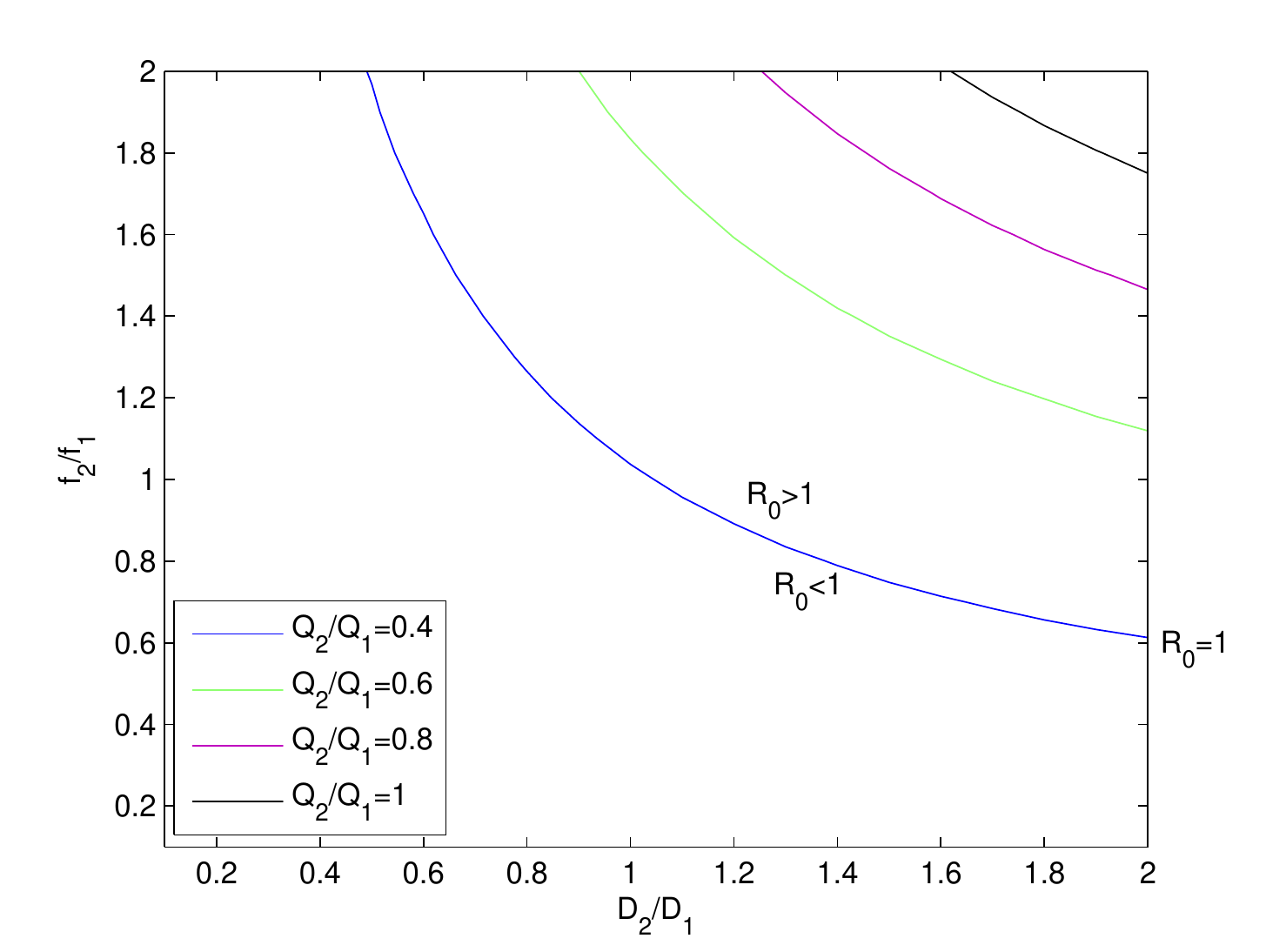}}
  \subfigure[]{   \label{right}\includegraphics[height=2.5in, width=2.8in]{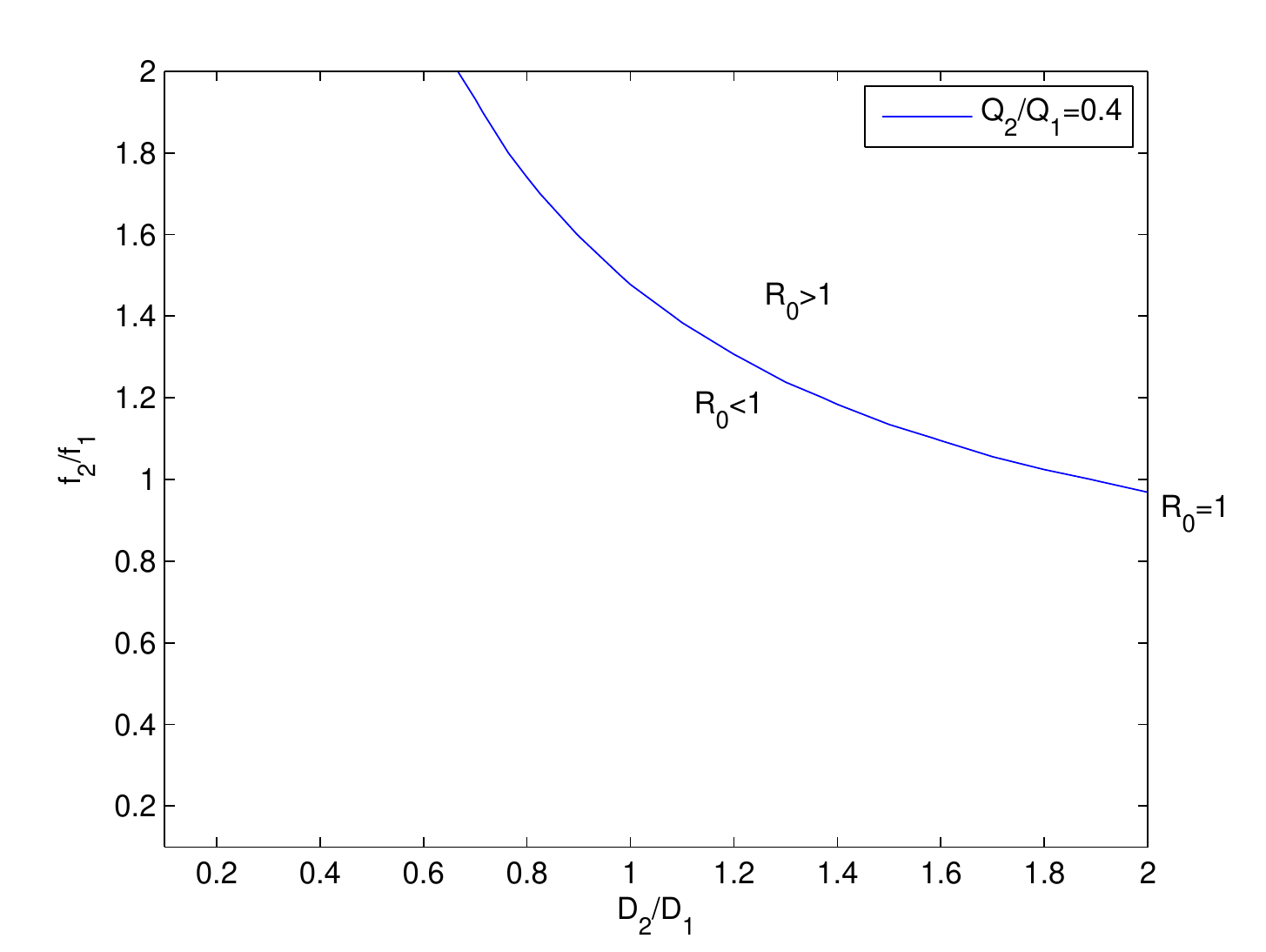}}
\caption{The regions for persistence or extinction in parameter
space in networks (3-a) and (4-a). Parameters:
   $m_j=0.06/(24\times 3600)$, $n_j=0.2$, $S_{0j}=0.000001$, $l_j=800$,
 $Q_1=0.05$. (a): in network (3-a), $D_1=D_3=0.6$, $f_1=f_3=0.8/(24\times3600)$, $B_1=B_3=20$, $B_2=4$;
  (b): $D_1=D_4=0.6$, $f_1=f_4=0.8/(24\times3600)$, $B_1=B_4=20$, $B_2=B_3=4$, $D_2=D_3$, $f_2=f_3$, $Q_2=Q_3$.
  On each curve, $\mathcal{R}_0=1$ under corresponding parameter conditions. Note that population persists in the region where $\mathcal{R}_0>1$ and population will be extinct in the region where $\mathcal{R}_0<1$. (ZF-FF) boundary conditions are applied.
     } \label{dfqR0}
\end{figure}


\begin{figure}[t!]
\centering
\subfigure[]{\label{left}\includegraphics[height=1.55in,width=1.9in]{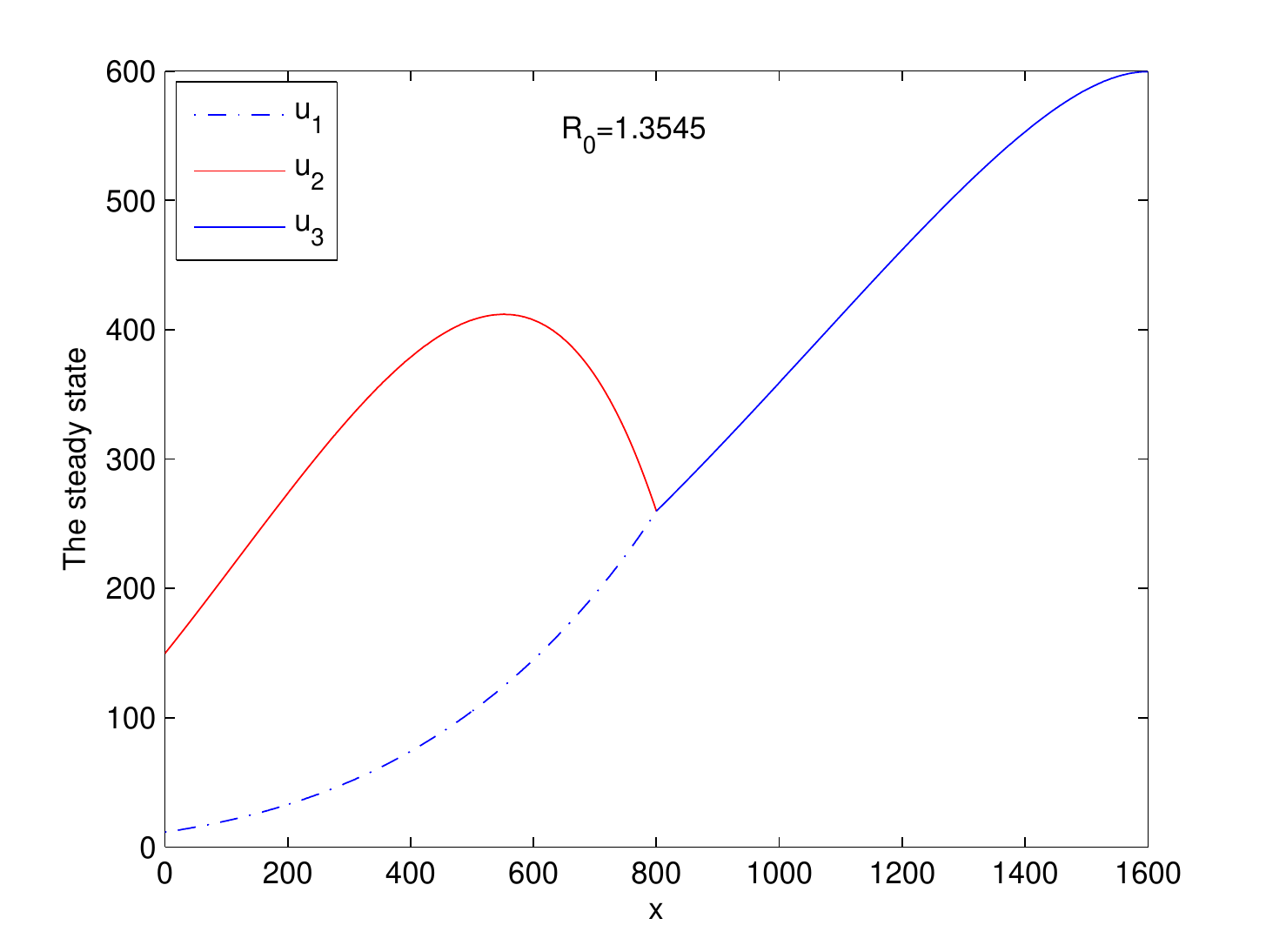}}
\hspace{-0.55cm}
  \subfigure[]{   \label{middle}\includegraphics[height=1.55in, width=1.9in]{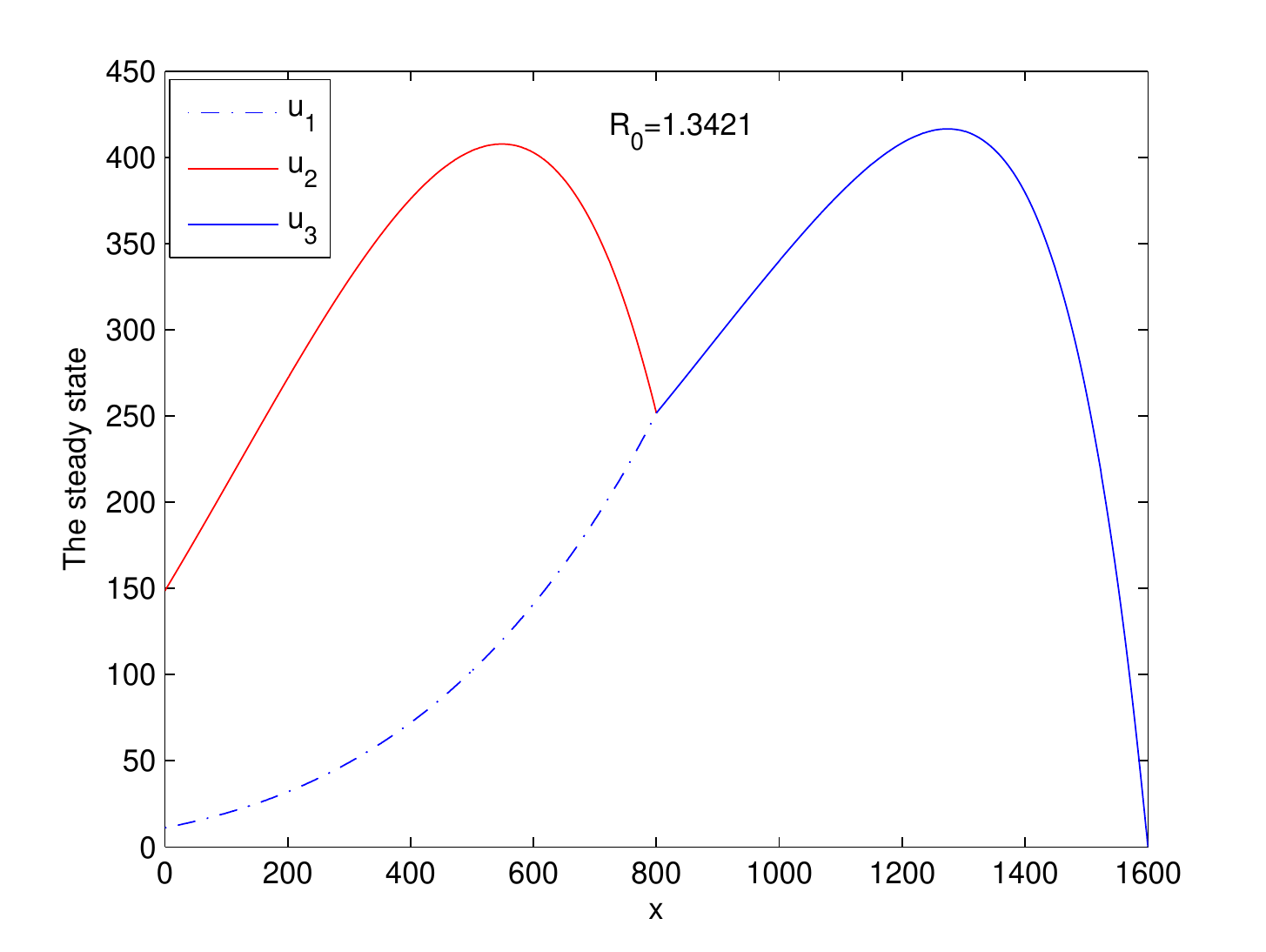}}\hspace{-0.55cm}
  \subfigure[]{   \label{right}\includegraphics[height=1.55in, width=1.9in]{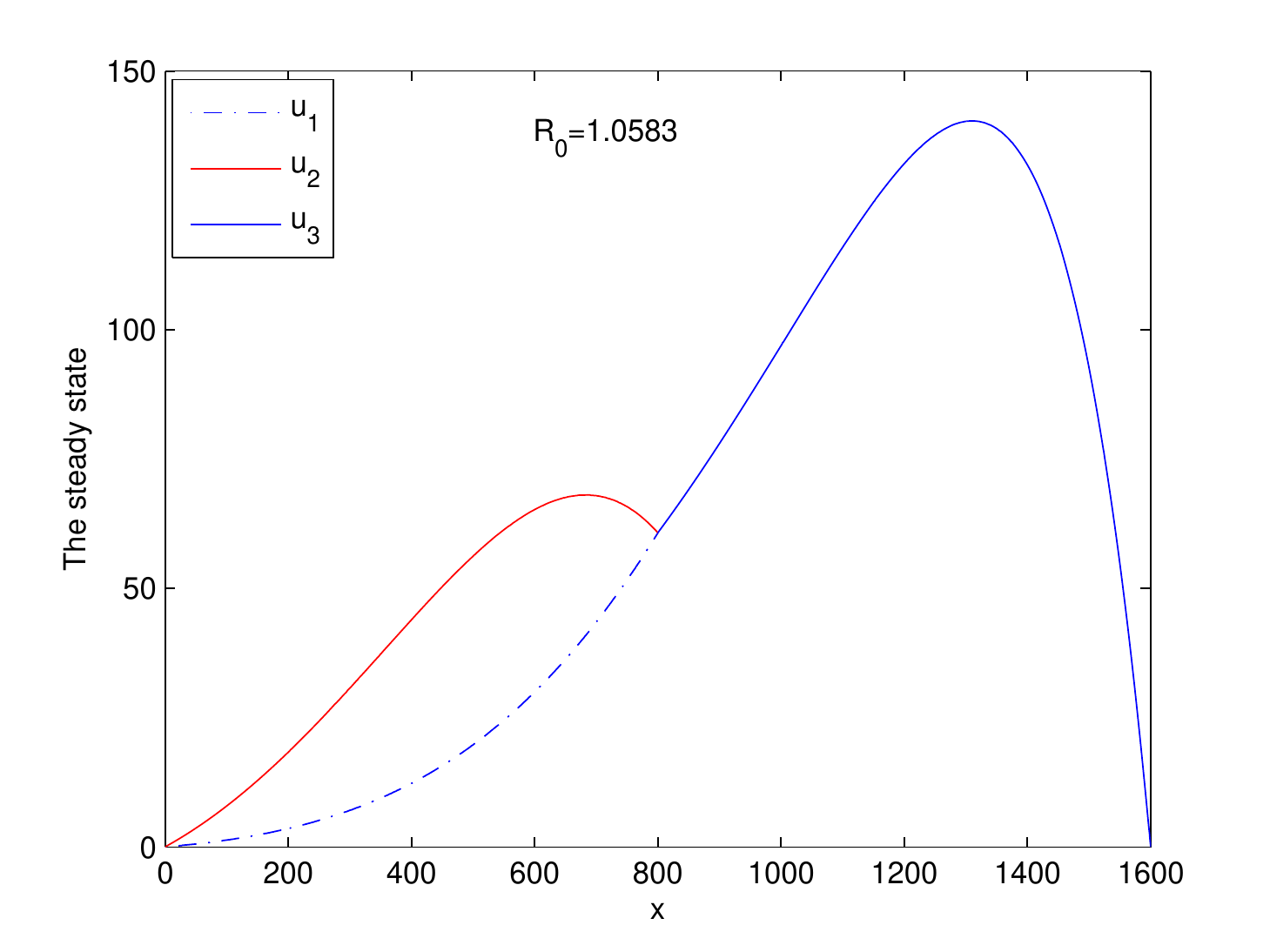}}
  \subfigure[]{ \label{left}
\includegraphics[height=1.55in,
width=1.9in]{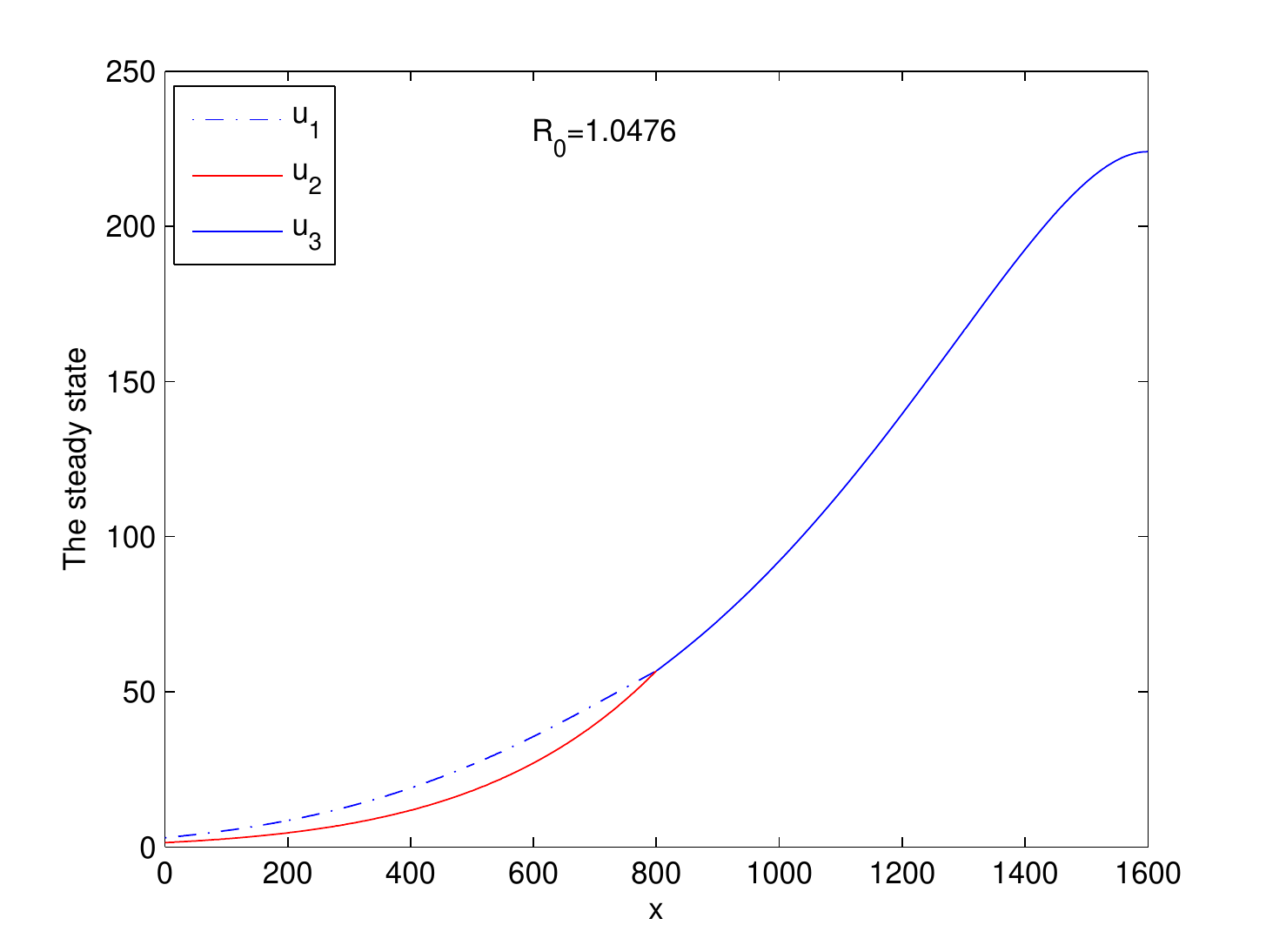}}
\hspace{-0.55cm}
  \subfigure[]{   \label{middle}\includegraphics[height=1.55in, width=1.9in]{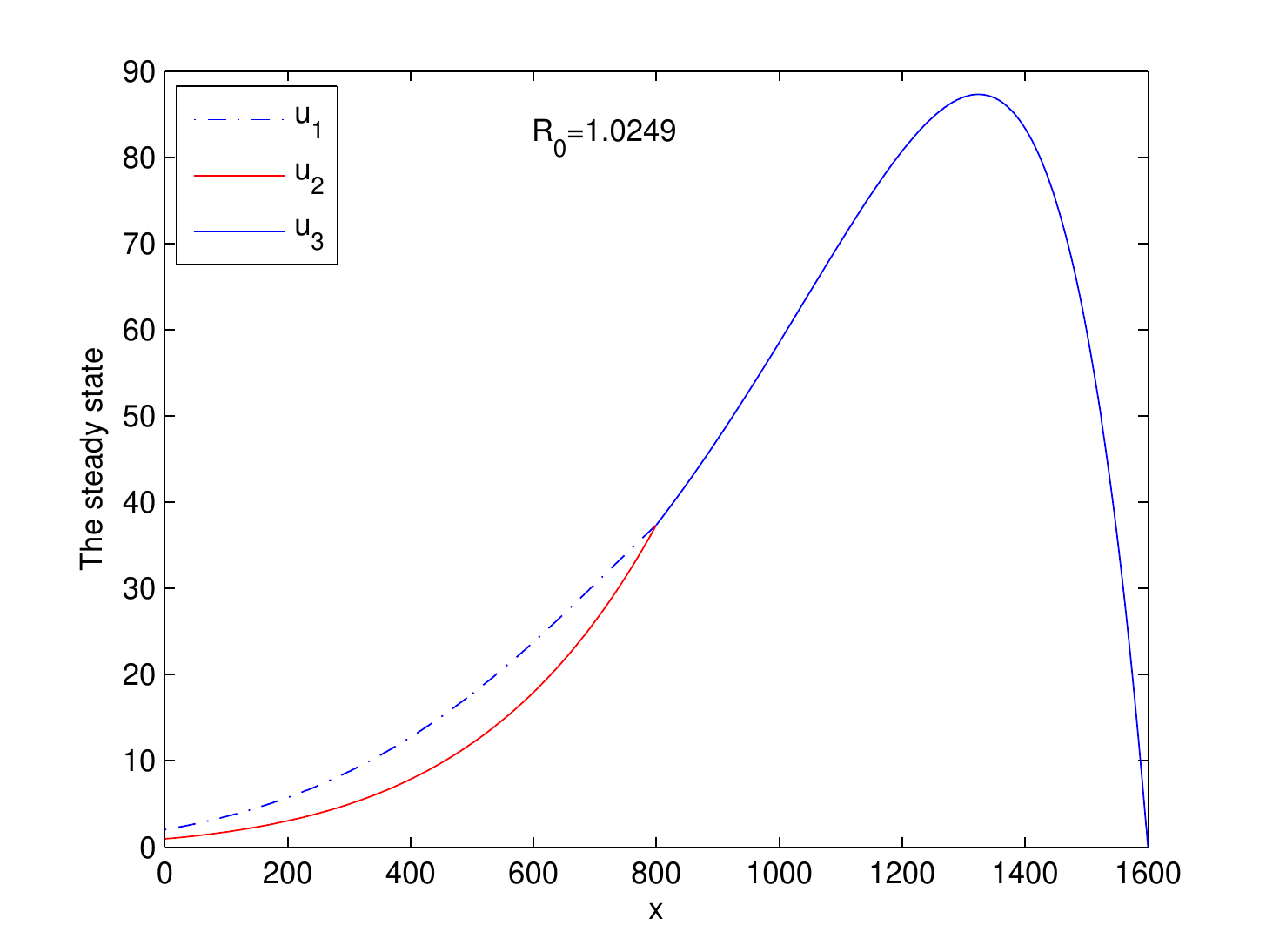}}\hspace{-0.55cm}
\caption{The steady state in network (3-a).
Parameters:
   $m_j=0.06/(24\times 3600)$, $n_j=0.2$, $S_{0j}=0.000001$, $l_j=800$,
 $Q_1=0.05$, $D_1=D_3=0.6$, $f_1=f_3=0.8/(24\times3600)$, $B_1=B_3=20$,
 $B_2=4$; $D_2=1.2D_1$, $f_2=1.2f_1$;
  (a)-(c):  $Q_2=0.1Q_1$; (d)-(f): $Q_2=0.4Q_1$.  Boundary
  conditions: (a) and (d): (ZF-FF) boundary conditions are applied; (b) and (e): (ZF-H) boundary conditions are applied; (c): (H-H) boundary conditions are applied.. Note that
  when $Q_2=0.4Q_1$ and (H-H) boundary conditions are applied,
  $\mathcal{R}_0=0.9301$ and the steady state does not exist.
       } \label{dfqR0ss}
\end{figure}

\subsubsection{The influence of the flow discharge and biological conditions on $\mathcal{R}_0$}

We continue to consider the networks of (3-a) and (4-a), where one or two identical small rivers merge into a large river. We vary the flow discharge $Q_2$, the diffusion rate $D_2$, and the birth rate $f_2$ in the small rivers to see the co-influence of flow conditions and biological conditions on population persistence and extinction. Figure \ref{dfqR0}  shows the regions for population persistence or extinction in the $D_2/D_1$-$f_2/f_1$ plane under different flow conditions. When the conditions in the large river are given, the larger the diffusion rate $D_2$ or the birth rate $f_2$ is, or the smaller discharge $Q_2$ is in the small river branch, the easier it is for the population to persist ($\mathcal{R}_0>1$) in the whole network. Comparing the two panels of Figure \ref{dfqR0}, we see that the parameter region for $\mathcal{R}_0>1$ is smaller in the network (4-a) than that in the network (3-a), under the same flow conditions. This confirms our previous observation that merging more small rivers with the same conditions into a large river leads to stronger threshold conditions for population persistence in the whole river network.

\subsubsection{Good or bad regions in a river network}

To see which part of a river network is good or bad for population persistence, we plot the spatial profile of
the stable positive steady state (when $\mathcal{R}_0>1$) (see Figure \ref{dfqR0ss}) for the network (3-a),
under different hydrological and biological conditions and three sets of boundary conditions. Our simulations show that
when $\mathcal{R}_0$ in the whole network is large,
the positive steady state is large in the upstream branch with better conditions (for persistence) and in the downstream branch. These branches are hence overall good regions for population growth and persistence. When $\mathcal{R}_0$ is less than $1$ or larger than but
close to $1$, 
the positive steady state (if exists) is mainly distributed in the downstream branch, and hence only the downstream branch is the relatively better region in the whole network and the upstream (large and small) branches are considered to be bad regions for population growth or persistence. Figure 
\ref{dfqR0ss} shows an example that the population can persist in the large river
if isolated and the diffusion rate and the birth rate in the small river branch are larger than those in the large
river branches. When the flow discharge in the small branch is much lower than that in the large branch, the resulted
$\mathcal{R}_0$ is large and good regions are distributed in the middle part of the small branch and the downstream branch; when the flow discharge in the small river is not very low, the resulted $\mathcal{R}_0$ is less than $1$ or larger but close to $1$ and the downstream branch is relatively good. If hostile condition is applied at a boundary, then the best region (where
the steady state takes the maximum value) shifts from the region near the boundary to the middle of the branch. In all the cases, it
seems that the downstream branch is at least not the worst region in a merging river network. We can also plot the next generation
distribution in these cases and the distributions are similar to the steady states in Figure \ref{dfqR0ss}.

\section{Discussion}

Real river systems are represented by networks. Previously, integro-differential equations and reaction-diffusion-advection equations were used to describe population dynamics in river networks \cite{Ramirez2012,Sarhad2015,Sarhad2014}, but parameters were assumed to be constants throughout the network and persistence theories in \cite{Sarhad2015,Sarhad2014} were mainly for radially symmetric trees.

We consider reaction-diffusion-advection equations in networks of general trees and establish the fundamental theories for the parabolic and elliptic problems in the networks (such as maximum principle, comparison principle, the existence, uniqueness and estimation of solutions, etc.). We also obtain the existence of the principal eigenvalue of the corresponding eigenvalue problem and establish the theory for population persistence and existence of a stable positive steady state when population persistence.
Furthermore, we define the net reproductive rate $\mathcal{R}_0$, which biologically represents the average number of offsprings that a single individual produces during its lifetime, and prove that the population persists in the whole river network if $\mathcal{R}_0\geq 1$ but will be extinct if $\mathcal{R}_0<1$. The theories were rigorously developed and proved. We convert the calculation of the net reproductive rate $\mathcal{R}_0$ to that of the principal eigenvalue of a generalized eigenvalue problem.  Then we use the
finite difference method to discretize the eigenvalue problem and eventually use the principal eigenvalue of the eigenvalue problem for a matrix to approximate $\mathcal{R}_0$. We then use a vector related to the eigenvector associated with the matrix eigenvalue problem to  approximate the eigenfunction of the next generation operator, which we call the next generation distribution. We also use the finite difference method to discretize the corresponding elliptic equations to obtain the positive steady state when $\mathcal{R}_0>1$.

We conduct numerical simulations in a few simple networks and investigate the influences of hydrological, physical, and biological factors on population persistence in terms of the net reproductive rate
and on the spatial profile of the positive steady state (when exists).
Our results coincide with existing ones in one-dimensional rivers: in a given type of river network, the population persistence becomes easier if the total river length is larger,  the flow discharge/advection is lower, or the diffusion rate or the growth rate is higher. In particular, we compare population dynamics in a smaller network and in a larger network. It turns out that stronger (or better) conditions are required for a population to persist in a larger river network. However, if the population can persist in both networks, then the net reproductive rate is larger in the larger network than in the smaller network. This special case can be considered as a simplification of the real problem  of adding or removing one branch from the network, which may happen when one upstream branch in the dessert suddenly disappears because of drought or when human beings plan to construct a new upstream or downstream branch to a network for some economic or other reasons. Our results answer the following questions: how will such phenomenon or activity influence population dynamics in the network, will the loss of a branch in the dessert cause the extinction of a species, and will a new branch attached to the network help the persistence of a species.

By using the interior connecting condition (\ref{2-interconds}), the differential operator in (\ref{model}) in a tree can be proved to be self-adjoint by rearranging the nodes in the tree and the eigenvalue problem can be written into a Sturm-Liouville problem. Hence the theory for the principal eigenvalue and associated eigenfunction can be established. If the network is not a tree, then we cannot establish the principal eigenvalue with this method and the persistence theory still remains open in this situation. The dynamics of interacting species in river networks is also an interesting future problem.

\section*{Acknowledgments}
\addcontentsline{toc}{section}{Acknowledgments}
Y.J. gratefully acknowledges NSF grant DMS-1411703. R.P. is supported by the NSF of China (No. 11671175, 11571200), the Priority Academic Program Development
of Jiangsu Higher Education Institutions, Top-notch Academic Programs Project of Jiangsu Higher
Education Institutions (No. PPZY2015A013) and Qing Lan Project of Jiangsu Province. J.S. is partially supported by NSF grant DMS-1715651.

\begin{appendices}

\section{Theory of parabolic and elliptic equations on networks}\label{appendixtheories}

\subsection{The strong maximum principle and comparison principle} In this subsection, we establish the strong maximum principle
and comparison principle for parabolic equations on metric graphs, which are fundamental in the investigation of existence,
uniqueness and positivity of solutions to the nonlinear problem {\bf (IBVP)}.

The following strong maximum principle is an analogue of the classical one for open subsets in Euclidian spaces.

 \begin{Lemma}\label{maxiprinciple}
  Assume that $c(x,t)\geq0$ and is bounded from above on $\Omega$. Let $u\in C(\Omega)\cap C^{2,1}(\Omega_p)$ satisfy
 $$
 \frac{\partial u_j}{\partial t}-D_j\frac{\partial^2 u_j}{\partial x_j^2}+v_j\frac{\partial u_j}{\partial x_j}+c_j(x_j,t)u_j\leq 0\ (\geq0),
 \,\, x_j\in (0,l_j),\,\,j\in I_{N-1}, \,\,t\in(0,T],
 $$
and
 $$
 {u}_{i_1}(e_i,t)=\cdots={u}_{i_m}(e_i,t),\ \ \sum_{j=i_1}^{i_m} d_{ij}A_j D_j\frac{\partial u_j}{\partial x_j}(e_i,t)\leq0\ (\geq0),\ \ \forall e_i\in E_r,\ t\in(0,T].
 $$
Suppose that $u\leq M\ (u\geq m)$ on $\Omega$ and $u(x_0,t_0)=M\ (u(x_0,t_0)=m)$ at some point $(x_0,t_0)\in \Omega_p$. If $c(x,t)\not\equiv0$,
suppose that $M\geq0\ (m\leq 0)$. Then
 $$
 u=M\ (u=m)\ \ \mbox{on}\ G\times [0,t_0].
 $$
\end{Lemma}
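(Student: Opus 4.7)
The plan is to reduce the network strong maximum principle to the classical parabolic strong maximum principle on each individual edge, and to handle propagation across ramification vertices by combining the continuity condition, the parabolic Hopf boundary lemma applied edgewise, and the Kirchhoff-type inequality. By replacing $u$ with $-u$ if necessary, it suffices to treat the subsolution case: suppose $u \leq M$ on $\Omega$, $u(x_0, t_0) = M$ for some $(x_0, t_0) \in \Omega_p$, and $M \geq 0$ when $c \not\equiv 0$. Fix this $t_0$ and define
$$\mathcal{M} = \{\, j \in I_{N-1} : u_j \equiv M \text{ on } [0, l_j] \times [0, t_0] \,\};$$
the goal is to show $\mathcal{M} = I_{N-1}$.

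First I would initialize $\mathcal{M}$. Either $x_0$ lies in the interior of some edge $k_j$, or $x_0 = e_i$ for some ramification vertex $e_i \in E_r$. In the first case the classical parabolic strong maximum principle applied to the one-dimensional operator $\partial_t - D_j \partial_{x_j}^2 + v_j \partial_{x_j} + c_j$ on $(0, l_j) \times (0, T]$ immediately gives $u_j \equiv M$ on $[0, l_j] \times [0, t_0]$, so $j \in \mathcal{M}$. In the second case the continuity condition at $e_i$ yields $u_h(e_i, t_0) = M$ for every edge $k_h$ incident to $e_i$, and I would then apply the parabolic Hopf boundary lemma at $(e_i, t_0)$ on each incident edge: with orientation chosen so that $d_{ih} \partial u_h / \partial x_h$ is the outward conormal derivative at $e_i$, Hopf forces $d_{ih} A_h D_h \partial u_h / \partial x_h (e_i, t_0) > 0$ whenever $u_h \not\equiv M$ on $[0, l_h] \times [0, t_0]$, while edges already in $\mathcal{M}$ contribute $0$. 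The Kirchhoff sum at $(e_i, t_0)$ would then be strictly positive unless every incident edge lies in $\mathcal{M}$, contradicting the hypothesis that the sum is $\leq 0$, and hence seeding $\mathcal{M}$ with at least one edge.

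The propagation step reuses the same contradiction. If $j \in \mathcal{M}$ and $e_i$ is a ramification endpoint of $k_j$, then $u_j(e_i, t) = M$ for $t \in [0, t_0]$, the continuity condition gives $u_h(e_i, t_0) = M$ on every other edge incident to $e_i$, and the Hopf plus Kirchhoff argument again forces all of those edges into $\mathcal{M}$. Since the topological graph $\Lambda = (E, K)$ is a connected tree, iterating this propagation starting from the seed edge produced in the initialization exhausts $I_{N-1}$, so $u \equiv M$ on $G \times [0, t_0]$.

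The main obstacle I anticipate is the careful deployment of the parabolic Hopf boundary lemma at ramification vertices. One has to verify the interior paraboloid condition for the one-dimensional cylinder $(0, l_h) \times (0, t_0]$ at the endpoint $(e_i, t_0)$, which is automatic in one space dimension; check that the sign assumption $c \geq 0$ combined with the hypothesis $M \geq 0$ when $c \not\equiv 0$ justifies applying Hopf to the inequation $\partial_t u - D u'' + v u' + c u \leq 0$; and, most delicately, track the orientation so that the incidence sign $d_{ih}$ makes $d_{ih} \partial u_h / \partial x_h$ coincide with the outward normal derivative at $e_i$, which is exactly what converts the hypothesis $\sum d_{ij} A_j D_j \partial u_j / \partial x_j \leq 0$ into a direct contradiction with the strictly positive Hopf contributions from non-$\mathcal{M}$ edges.
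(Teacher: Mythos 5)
Your proposal follows essentially the same route as the paper's proof: the classical one--dimensional strong maximum principle applied edgewise, the parabolic Hopf boundary lemma at ramification vertices, and a contradiction with the Kirchhoff inequality, propagated through the connected tree; your bookkeeping via the set $\mathcal{M}$ is only a cosmetic repackaging of the paper's case analysis. The one step you gloss over is that Hopf cannot be invoked merely from $u_h\not\equiv M$ on the cylinder: one must first establish that $u_h<M$ on $(0,l_h)\times(\hat t,t_0)$ for some $\hat t<t_0$ (the paper's claim \eqref{k_h-3}, proved by a sequence argument using the edgewise strong maximum principle), since a priori $u_h$ could attain $M$ at interior points at times arbitrarily close to $t_0$, which would destroy the strict inequality that the Hopf lemma requires in a parabolic neighborhood of $(e_i,t_0)$.
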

\begin{proof}

Suppose that $u(x_0,t_0)=M$ at some point $(x_0,t_0)\in \Omega_p$.
We distinguish two cases: (i)\ $x_0\not\in E_r$; (ii)\ $x_0\in E_r$.

In Case (i), clearly $x_0$ is an interior point of some edge $k_j$. The direct application of the strong maximum principle for Euclidean domains
(see, for example, \cite[Theorem 4, Chapter 3]{Protterbook1967}) gives that
 $$
 u_j(x_j,t)=M,\ \ \ \forall (x_j,t)\in[0,l_j]\times[0,t_0].
 $$
Let $k_h$ be an arbitrary edge such that $k_h\cap k_j=\{e_i\}$, where $e_i$ is an end point of $k_j$. If there exists an interior point $y_0$ of $k_h$ such that $u_h(y_0,t_0)=M$, then
 \begin{equation}\label{k_h-1}
 u_h(x_h,t)=M,\ \ \ \forall (x_h,t)\in[0,l_h]\times[0,t_0],
 \end{equation}
due to the strong maximum principle for domains.
If such interior maximum point does not exist, then it is necessary that
 \begin{equation}\label{k_h-2}
 u_h(x_h,t_0)<M,\  \ \ \forall x_h\in(0,l_h).
 \end{equation}
Thus, we can claim that for some $0<\hat t<t_0$, it holds
 \begin{equation}\label{k_h-3}
 u_h(x_h,t)<M,\ \ \ \forall (x_h,t)\in(0,l_h)\times(\hat t,t_0).
 \end{equation}
Suppose that such a claim is false. Then we can find a sequence $\{(\hat x_n, \hat t_n)\}_{n=1}^\infty$
with $\hat x_n\in(0,l_h)$, $\hat t_n<\hat t_{n+1}$ for all $n\geq1$ and $\hat t_n\to t_0$ as $n\to\infty$ such that
$u_h(\hat x_n,\hat t_n)=M$ for all $n\geq1$. By the strong maximum principle for domains again, one has
$u_h(x_h,\hat t_n)=M$ for all $x_h\in[0,l_h]$. Since $\hat t_n\to t_0$ as $n\to\infty$ and $u_h$ is continuous on
$[0,l_h]\times[0,t_0]$, it easily follows that \eqref{k_h-1} holds, which contradicts with \eqref{k_h-2}. Hence, the claim \eqref{k_h-3} is proved.
 In view of \eqref{k_h-3} and the fact that $u_h(x_h,t)$ attains its maximum $M$ at the boundary point $(e_i,t_0)$ of the region $(0,l_h)\times(\hat t,t_0)$,
one then applies the classical Hopf boundary lemma for Euclidean domains (see \cite[Theorem 3, Chapter 3]{Protterbook1967}) to conclude that $d_{hi}\frac{\partial u_h}{\partial x_h}(e_i,t_0)>0$. Recall that $M$ is the maximum value of $u$ on $\Omega$. So for any $j=i_1,\cdots,i_m$ with $j\not=h$  such that $k_j\cap k_h\neq \varnothing$, we have
$d_{ij}\frac{\partial u_j}{\partial x_j}(e_i,t_0)\geq0$. Therefore, it holds
 $$
 \sum_{j=i_1}^{i_m} d_{ij}A_j D_j\frac{\partial u_j}{\partial x_j}(e_i,t_0)>0,
 $$
which is impossible due to our assumption. This contradiction shows that \eqref{k_h-1} must hold. As $G$ is connected, we can assert that $u=M$ in $G\times [0,t_0]$.

We now consider Case (ii). Take $k_j$ to be an arbitrary edge such that $x_0$ is its endpoint and $x_0\in E_r$.
By what was proved in Case (i), we can suppose that $u_j(x_j,t_0)<M,\ \forall x_j\in(0,l_j)$. However, the same argument
as in the last two paragraphs in the proof of Case (i) leads to a contradiction. Thus, $u_j(x_j,t)=M$ for all $(x_j,t)\in[0,l_j]\times[0,t_0]$
and in turn $u=M$ in $G\times [0,t_0]$ by the arbitrariness of $k_j$.
\end{proof}

We remark that \cite[Theorem 1]{vonbelow1991} covers the special case of Lemma \ref{maxiprinciple} that $c(x,t)\equiv0$ and
 $$
 \sum_{j=i_1}^{i_m} d_{ij}A_jD_j\frac{\partial u_j}{\partial x_j}(e_i,t)=0,\, \forall e_i\in E_r,\, t\in(0,T].
 $$
Hence our result is more general and it is useful when dealing with upper-lower solutions.
As a direct application of Lemma \ref{maxiprinciple} as the classical Hopf lemma for Euclidean domains (see, for example, \cite[Theorem 3, Chapter 3]{Protterbook1967}), we have the following Hopf boundary lemma for networks.

\begin{Lemma}\label{signderivative} Assume that $c(x,t)\geq0$ and is bounded from above on $\Omega$. Let $u\in C(\Omega)\cap C^{2,1}(\Omega_p)$ satisfy
 $$
 \frac{\partial u_j}{\partial t}-D_j\frac{\partial^2 u_j}{\partial x_j^2}+v_j\frac{\partial u_j}{\partial x_j}+c(x_j,t)u_j\leq 0\ \, (\geq 0),\ \
 x_j\in(0,l_j),\ j\in I_{N-1},\ t>0.
 $$
Suppose that $u$ is continuously differentiable at some point $(e_i,t)\in E_b\times(0,T]$, $u(e_i,t)=M$, and $u(x,t)<M$ ($>m$) for all $(x,t)\in
\Omega_p$. If $c\not\equiv0$, assume that $M\geq0\ (m\leq0)$. Then $d_{ij}u_{x_j}(e_i,t)>0\ (<0)$.
\end{Lemma}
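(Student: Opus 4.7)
The plan is to reduce the assertion to the classical one-dimensional Hopf boundary lemma (e.g., \cite[Theorem 3, Chapter 3]{Protterbook1967}), exploiting the fact that a boundary vertex $e_i \in E_b$ has valency one and hence is the endpoint of exactly one edge.

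First I would observe that, by definition of $E_b$, there is a unique edge $k_j$ with $e_i \in k_j$, so $\pi_j^{-1}(e_i) \in \{0, l_j\}$. The restriction $u_j$ is defined on $[0, l_j] \times [0, T]$, is continuous, is classically $C^{2,1}$ in $(0, l_j) \times (0, T]$, and satisfies the linear parabolic differential inequality
\begin{equation*}
\frac{\partial u_j}{\partial t} - D_j \frac{\partial^2 u_j}{\partial x_j^2} + v_j \frac{\partial u_j}{\partial x_j} + c(x_j, t) u_j \leq 0 \ (\geq 0)
\end{equation*}
on $(0, l_j) \times (0, T]$, with $c \geq 0$ bounded and (when $c \not\equiv 0$) $M \geq 0$ (resp.\ $m \leq 0$). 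Moreover, by the hypothesis $u < M$ (resp.\ $u > m$) on $\Omega_p$, we have $u_j(x_j, t) < M$ (resp.\ $>m$) for every $(x_j, t) \in (0, l_j) \times (0, T]$, while $u_j$ attains its maximum $M$ (resp.\ minimum $m$) at the boundary point $(\pi_j^{-1}(e_i), t)$, at which it is assumed continuously differentiable.

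Next I would invoke the classical Hopf boundary lemma for a one-dimensional parabolic operator at an endpoint of the spatial interval. The required interior sphere condition at the endpoint is trivially satisfied, and the sign condition on $M$ (or $m$) required when the zeroth-order coefficient is not identically zero is exactly what is assumed. The classical lemma therefore yields that the outward normal derivative of $u_j$ at $(\pi_j^{-1}(e_i), t)$ is strictly positive in the maximum case (resp.\ strictly negative in the minimum case). Concretely, if $\pi_j(l_j) = e_i$ (i.e., $d_{ij} = 1$), the outward direction is $+\partial_{x_j}$, so $\partial u_j/\partial x_j(e_i, t) > 0$ (resp.\ $<0$); if $\pi_j(0) = e_i$ (i.e., $d_{ij} = -1$), the outward direction is $-\partial_{x_j}$, so $\partial u_j/\partial x_j(e_i, t) < 0$ (resp.\ $>0$). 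In either case the product $d_{ij}\, u_{x_j}(e_i, t)$ is strictly positive (resp.\ strictly negative), which is the desired conclusion.

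There is no real obstacle here beyond bookkeeping: the essential content is the classical Hopf lemma, and Lemma \ref{maxiprinciple} is not even needed, since the hypothesis $u < M$ on $\Omega_p$ already provides the strict interior inequality that the Hopf lemma requires. The only thing to be careful about is matching the sign convention of the incidence matrix $d_{ij}$ defined in \eqref{directiond} with the usual notion of an outward-pointing derivative at the endpoint of an interval, which is handled by the simple case distinction above.
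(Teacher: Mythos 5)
Your proof is correct and matches the paper's intent: the paper states Lemma \ref{signderivative} as a direct application of the classical Hopf boundary lemma of \cite[Theorem 3, Chapter 3]{Protterbook1967} on the single edge meeting the boundary vertex, which is exactly the reduction you carry out, with the $d_{ij}$ sign bookkeeping done explicitly. Your side remark that Lemma \ref{maxiprinciple} is not actually needed (since strictness $u<M$ on $\Omega_p$ is already hypothesized) is accurate.
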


As a corollary of Lemmas \ref{maxiprinciple} and \ref{signderivative}, we immediately obtain the following comparison principle.

 \begin{Lemma}\label{compprinciple} Assume that $c(x,t)$ is bounded on $\Omega$. Let $u\in C(\Omega)\cap C^{2,1}(\Omega_p)$ satisfy
 \begin{equation}
 \begin{cases}
 \D\frac{\partial u_j}{\partial t}-D_j\frac{\partial^2 {{u}}_j}{\partial x_j^2}+v_j\frac{\partial {{u}}_j}{\partial x_j}+c_j(x_j,t)u_j\geq0,
 \, & x_j\in (0,l_j), \ j\in I_{N-1},\ t\in(0,T),\\
 \displaystyle
 \alpha_{j,1}{u}_j(e_i,t)-\beta_{j,1}\frac{\partial {u}_j}{\partial x_j}(e_i,t)\geq 0, & \forall e_i\in E_u, \ t\in(0,T),\\
 \displaystyle
 \alpha_{j,2}{u}_j(e_i,t)+\beta_{j,2}\frac{\partial {u}_j}{\partial x_j}(e_i,t)\geq 0, & \forall e_i\in E_d,\ t\in(0,T), \\
 \displaystyle
 {u}_{i_1}(e_i,t)=\cdots={u}_{i_m}(e_i,t),\ & \forall e_i\in E_r,\ t\in(0,T),
 \\ \D \sum_{j=i_1}^{i_m} d_{ij}A_jD_j\frac{\partial {u}_j}{\partial x_j}(e_i,t)\geq 0,\ & \forall e_i\in E_r,\ t\in(0,T),\\
 u_j(x_j,0)\geq 0,\ &  x_j\in (0,l_j),\ \ j\in I_{N-1},
 \end{cases}
 \end{equation}
and assume that $\frac{\partial {u}_j}{\partial x_j}(e_i,t)$ exists for $t\in(0,T],\,e_i\in E_b$ if $\beta_{j,s}\not=0$ for some $j\in I_{N-1},\,s\in\{1,2\}$.
Then $u(x,t)\geq 0$ for all $(x,t)\in\Omega$. If $u(x,0)\not\equiv0$, then $u(x,t)>0$ for all $(x,t)\in G\backslash E_0\times(0,T]$.
\end{Lemma}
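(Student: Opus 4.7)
The plan is to multiply $u$ by $e^{-\lambda t}$ to reduce to a problem with non-negative zeroth-order coefficient, and then rule out the existence of a strictly negative minimum by combining the strong maximum principle (Lemma \ref{maxiprinciple}) with the Hopf boundary lemma (Lemma \ref{signderivative}). Since $c$ is bounded on $\Omega$, pick $\lambda > 0$ with $c + \lambda \geq 0$ and set $w = e^{-\lambda t} u$. The parabolic inequality for $w$ has coefficient $c + \lambda \geq 0$, while each boundary, junction, and initial inequality only picks up the positive factor $e^{-\lambda t}$, so every sign is preserved. It therefore suffices to prove $w \geq 0$ on $\Omega$.

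Suppose for contradiction $m := \min_\Omega w < 0$, attained at some $(x_0, t_0) \in \Omega$. The initial inequality $w(\cdot, 0) \geq 0 > m$ rules out $t_0 = 0$. If $(x_0, t_0) \in \Omega_p = (G \setminus E_b) \times (0, T]$, Lemma \ref{maxiprinciple} (minimum form, applicable because $c + \lambda \geq 0$ and $m \leq 0$) forces $w \equiv m$ on $G \times [0, t_0]$, contradicting the initial inequality; hence $x_0 \in E_b$ on some edge $k_j$, and $w > m$ throughout $\Omega_p$. If $\beta_{j,s} = 0$ (so $\alpha_{j,s} > 0$), the corresponding boundary inequality at $(x_0, t_0)$ reads $\alpha_{j,s} m \geq 0$, which is false. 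If $\beta_{j,s} > 0$, the hypothesis grants the existence of $\partial_{x_j} w(x_0, t_0)$, and Lemma \ref{signderivative} yields $d_{ij}\,\partial_{x_j} w(x_0, t_0) < 0$: at an upstream vertex ($d_{ij} = -1$) this makes the boundary quantity $\alpha_{j,1} m - \beta_{j,1}\,\partial_{x_j} w(x_0, t_0)$ strictly negative, and the downstream case ($d_{ij} = 1$) is symmetric. In every situation we reach a contradiction, so $w \geq 0$ and consequently $u \geq 0$ on $\Omega$.

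For the strict positivity when $u(\cdot, 0) \not\equiv 0$, we revisit the same case analysis with $m = 0$. A zero of $u$ in $\Omega_p$ would, via Lemma \ref{maxiprinciple}, propagate to $u \equiv 0$ on $G \times [0, t_0]$, contradicting $u(\cdot, 0) \not\equiv 0$. At a vertex $e_i \in E_b \setminus E_0$ we necessarily have $\beta_{j,s} > 0$, since $E_0$ is precisely the hostile/Dirichlet set, characterised by $\beta = 0$; the Hopf lemma then combined with the boundary inequality, which now reads $-\beta_{j,s}\,\partial_{x_j} u(e_i, t_0) \geq 0$, yields a contradiction exactly as before. The main obstacle I foresee is the careful bookkeeping of the Robin parameters $(\alpha_{j,s}, \beta_{j,s})$: one must distinguish the degenerate Dirichlet case $\beta = 0$, where the boundary inequality alone suffices, from the non-degenerate case $\beta > 0$, where the network Hopf lemma is both available (the needed one-sided differentiability is explicitly assumed) and required to close the argument. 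Beyond this case split the proof is a faithful adaptation of the classical parabolic comparison argument to the network setting.
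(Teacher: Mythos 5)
Your argument is correct and follows essentially the same route as the paper's proof: the exponential substitution $e^{-\lambda t}u$ to make the zeroth-order coefficient non-negative, then Lemma \ref{maxiprinciple} to exclude an interior non-positive minimum and Lemma \ref{signderivative} together with the Robin/Dirichlet case split to exclude a boundary one. You merely spell out the case analysis that the paper compresses into ``it follows from Lemmas \ref{maxiprinciple} and \ref{signderivative} that $\min_\Omega v\geq 0$,'' and your bookkeeping of the $(\alpha_{j,s},\beta_{j,s})$ cases and of $E_0$ is accurate.
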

\begin{proof}
Denote $v(x,t)=e^{-\ell t}u(x,t)$ and take the constant $\ell>0$ to be large so that $\ell+c>0$ on $\Omega$. Elementary computation gives
$v\in C(\Omega)\cap C^{2,1}(\Omega_p)$ satisfy
\begin{equation}
 \begin{cases}
 \D \frac{\partial v_j}{\partial t}-D_j\frac{\partial^2 {{v}}_j}{\partial x_j^2}+v_j\frac{\partial {{v}}_j}{\partial x_j}+[c_j(x_j,t)+\ell]v_j\geq0,
 \, & x_j\in (0,l_j), \ j\in I_{N-1},\ t\in(0,T),\\
 \displaystyle
 \alpha_{j,1}{v}_j(e_i,t)-\beta_{j,1}\frac{\partial {v}_j}{\partial x_j}(e_i,t)\geq 0, & \forall e_i\in E_u, \ t\in(0,T),\\
 \displaystyle
\alpha_{j,2}{v}_j(e_i,t)+\beta_{j,2}\frac{\partial {v}_j}{\partial x_j}(e_i,t)\geq 0, & \forall e_i\in E_d,\ t\in(0,T), \\
 \displaystyle
 v_{i_1}(e_i,t)=\cdots=v_{i_m}(e_i,t),\ & \forall e_i\in E_r,\ t\in(0,T),
 \\ \D \sum_{j=i_1}^{i_m} d_{ij}A_jD_j\frac{\partial {v}_j}{\partial x_j}(e_i,t)\geq 0,\ & \forall e_i\in E_r,\ t\in(0,T),\\
  v_j(x_j,0)\geq 0,\ &  x_j\in (0,l_j),\ \ j\in I_{N-1}.
 \end{cases}
 \end{equation}
Hence, it follows from Lemmas \ref{maxiprinciple} and
\ref{signderivative} that $\min\limits_{\Omega}v(x,t)=m\geq0$ and so
$u(x,t)\geq0$ on $\Omega$. When $u(x,0)\not\equiv0$ (equivalently,
$v(x,0)\not\equiv0$), suppose that $u_j(x_*,t_*)=0$ for some
$(x_*,t_*)\in\Omega_p$. Then $v_j(x_*,t_*)=0=\min_{\Omega}v(x,t)$,
which implies $v\equiv0$ on $\Omega$ by  Lemma \ref{maxiprinciple},
a contradiction. Thus, $u(x,t)>0$ for all $(x,t)\in\Omega_p$.
Additionally, Lemma \ref{signderivative} implies that $u(x,t)>0$
for all $(x,t)\in E_b\backslash E_0\times(0,T]$.
\end{proof}

We remark that  \cite[Theorem 1]{vonbelow1994} states another type of comparison principle for the parabolic problem on graphs.

\subsection{Linear parabolic problem}
In this subsection, we aim to establish the existence, uniqueness, $L^p$ and Schauder estimates of solutions to the following linear parabolic problem:
 \begin{equation}\label{model-a}
 \frac{\partial u_j}{\partial t}=D_j\frac{\partial^2 u_j}{\partial x_j^2}-v_j\frac{\partial u_j}{\partial x_j}+c_j(x_j,t)u_j+g_j(x_j,t),
 \,\, x_j\in (0,l_j), \,\,j\in I_{N-1},\  \ t\in(0,T),
 \end{equation}
associated with the initial condition, boundary and interior connection conditions:
 \begin{equation}\label{model-aa}
 \noindent
 (\ref{modelic}),\, (\ref{bcupstreamzf}),\, (\ref{bcdownstreamzf})\,  \mbox{
 and } (\ref{interconds}),
 \end{equation}
where $T>0$ is a fixed number.

We would like to mention that when the initial data $u^0$ is smooth and satisfies compatibility conditions,
von Below \cite{vonbelow1988} already studied the solvability of (\ref{model-a}).
Here we want to discuss the same issue for the initial data $u^0\in L^p(G)\, (p>1)$
by appealing to the semigroup theory used in \cite{DMugnolo,WArendt,Fijavz}. To the end we need to
make a transformation so that problem (\ref{model-a}) can be written in the form that the framework of \cite{DMugnolo} can apply.

Let
 \begin{equation}\label{pgammadef}
 p_j(x_j)=\eta_j e^{-\frac{v_j}{D_j}x_j},\ \ \quad \zeta_j(x_j)=\frac{p_j(x_j)}{D_j},
 \end{equation}
where $\eta_j$ is a constant to be determined on edge $k_j$. Then (\ref{model-a})  can be written as
 \begin{equation}
 \label{pgammadefnewL}
 \frac{\partial u_j}{\partial t}=\mathcal{A}_j u_j+c_j(x_j,t)u_j+g_j(x_j,t), \,\, x_j\in (0,l_j), \,\,j\in I_{N-1},\  \ t\in(0,T),
 \end{equation}
where
 \begin{equation}\label{pgammadefnewAj}
 \mathcal{A}_j=\frac{1}{\zeta_j(x_j)}\frac{\partial}{\partial x_j}[p_j(x_j)\frac{\partial}{\partial x_j}].
 \end{equation}

Choose one upstream vertex and reorder the vertices and edges such that the chosen vertex is $e_1$,
the edge connecting to $e_1$ is $k_1$, and the endpoint $e_2$ of $k_1$  connects to edges $k_2$, $k_3$, $\cdots$,
and $k_m$. Then at this interior vertex $e_2$, the boundary condition is $\sum\limits_{j=1}^{m}
d_{2j}A_j\left(D_j\frac{\partial u_j}{\partial x_j}\right)(e_2)=0$.
Define $\eta_1=1$ on the edge $k_1$.  Choose suitable $\eta_2$, $\cdots$, $\eta_m$ such that
 $$
 A_1D_1:A_2D_2:\cdots :A_mD_m=p_1(e_2):p_2(e_2):\cdots:p_m(e_2),
 $$
where $p_j(e_2)=p_j(0)$ or $=p_j(l_j)$ depending on whether $e_2$ is
the starting point or ending point of $k_j$. Then the interface
boundary condition at $e_2$ is equivalent to $\sum\limits_{j=1}^{m}
d_{2j}p_j(e_2)\frac{\partial u_j}{\partial x_j} (e_2)=0$. Since $G$
is a tree, we can similarly choose the values for other $\eta_j$'s
and rewrite the interface boundary conditions (\ref{2-interconds})
at interior vertices as
 \begin{equation}\label{pgammadefnewintercon}
 \sum\limits_{j=i_1}^{i_m} d_{ij}p_j(e_i)\frac{\partial u_j}{\partial x_j} (e_i)=0.
 \end{equation}
 Introduce the inner product for functions $\psi,\,\phi\in L^2(G)$ as
 \begin{equation}\label{scalarpdef}
 \langle\psi,\phi\rangle =\sum_{j=1}^{N-1} \int_0^{l_j}
 \zeta_j(x_j) \psi_j{\phi_j} dx_j.
 \end{equation}
Then the differential operator $\mathcal{A}$ with the domain given as in
\cite[Lemma 4.11]{DMugnolo} is self-adjoint with respect to $\langle \cdot,\cdot \rangle$,
and the similar analysis to that of \cite{DMugnolo} shows that $\mathcal{A}$ generates a compact,
contractive and positive strongly continuous semigroup.

Denote
 $$
 g(x,t)=g_j(x_j,t),\,\,\ \ x_j\in (0,l_j), \,\,j\in I_{N-1},\  \ t\in(0,T).
 $$
We now state the following solvability result, and $L^p$ and Schauder estimates for problem (\ref{model-a})-(\ref{model-aa}).
 \begin{Theorem}
 \label{linearexistence}
 Assume that $c\in L^\infty(\Omega)$, $g\in L^p(\Omega)$ for fixed $p>1$.
Then the initial boundary value problem (\ref{model-a})-(\ref{model-aa}) is well-posed on $L^p(G)$, i.e., for
any initial data $u^0\in L^p(G)$, (\ref{model-a})-(\ref{model-aa}) admits a unique strong solution $u\in W^{2,1}_p(\Omega)$
for $t>0$ that continuously depends on the initial data. Moreover, the following estimates hold.
 \begin{enumerate}
 \item[{\rm(i)}] If $u^0\in W^{2}_p(G)$, then the unique solution $u$ satisfies
 $$
 \|u\|_{W^{2,1}_p(\Omega)}\leq C(\|g\|_{L^p(\Omega)}+\|u^0\|_{W^{2}_p(G)})
 $$
for some constant $C>0$ independent of $u,\,u^0,\,g$.

 \item[{\rm(ii)}] If $c\in C^{\alpha,\alpha/2}(\Omega)$, $g\in C^\alpha(\Omega),\, u^0\in C^{2+\alpha}(G)$ for some $\alpha\in(0,1)$ and $u^0$ solves
(\ref{model-a})-(\ref{model-aa}) at $E_b\times\{0\}$ and \eqref{2-interconds} at $E_r\times\{0\}$, and
$[D_j\frac{\partial^2 u^0_j}{\partial x_j^2}-v_j\frac{\partial u^0_j}{\partial x_j}+c_j(\cdot,0)u^0_j+g_j(\cdot,0)](e_i)
=[D_h\frac{\partial^2 u^0_h}{\partial x_h^2}-v_h\frac{\partial u^0_h}{\partial x_h}+c_j(\cdot,0)u^0_h+g_h(\cdot,0)](e_i)$ when $k_j\cap k_h=\{e_i\}$,
then the unique solution $u\in C^{2+\alpha,1+\frac{\alpha}{2}}(\Omega)$ and satisfies
 $$
 \|u\|_{C^{2+\alpha,1+\frac{\alpha}{2}}(\Omega)}\leq C(\|g\|_{C^\alpha(\Omega)}+\|u^0\|_{C^{2+\alpha}(G)})
 $$
for some constant $C>0$ independent of $u,\,u^0,\,g$.
 \end{enumerate}
 \end{Theorem}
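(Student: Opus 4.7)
The plan is to exploit the self-adjoint reformulation that the excerpt has already set up. After the change of variables $p_j(x_j)=\eta_j e^{-(v_j/D_j)x_j}$ and $\zeta_j=p_j/D_j$, with the constants $\eta_j$ chosen so that the Kirchhoff condition \eqref{2-interconds} becomes $\sum_{j=i_1}^{i_m} d_{ij} p_j(e_i) \partial_{x_j} u_j(e_i,t)=0$, the principal part $\mathcal{A}$ of the operator is symmetric and dissipative with respect to the weighted inner product $\langle\cdot,\cdot\rangle$ in \eqref{scalarpdef}. Following \cite{DMugnolo,WArendt,Fijavz}, on the natural form domain encoding \eqref{bcupstreamzf}, \eqref{bcdownstreamzf}, and \eqref{pgammadefnewintercon}, $\mathcal{A}$ generates a compact, contractive, positive $C_0$-semigroup $\{S(t)\}_{t\geq 0}$ on $L^2(G)$, which extends to a consistent analytic $C_0$-semigroup on each $L^p(G)$ for $p\in(1,\infty)$ by Stein interpolation. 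This is the backbone of everything that follows.

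For well-posedness on $L^p(G)$, I would recast (\ref{model-a})--(\ref{model-aa}) as the non-autonomous abstract Cauchy problem $u'(t)=\mathcal{A}u(t)+c(\cdot,t)u(t)+g(\cdot,t)$ with $u(0)=u^0$. Since $c\in L^\infty(\Omega)$, multiplication by $c(\cdot,t)$ is a uniformly bounded perturbation of $\mathcal{A}$; the classical Phillips/Pazy perturbation theory then yields a unique mild solution
\[
u(t)=S(t)u^0+\int_0^t S(t-s)\bigl[c(\cdot,s)u(s)+g(\cdot,s)\bigr]\,ds,
\]
in $C([0,T];L^p(G))$ depending continuously on $u^0$ and $g$, with uniqueness from a Gronwall estimate. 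Analyticity of $S$ upgrades this mild solution to a strong one in $W^{2,1}_{p,\mathrm{loc}}((0,T];G)$.

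For the $L^p$-estimate in (i), the decisive input is that $\mathcal{A}$, being a self-adjoint operator on a UMD space with bounded imaginary powers, has maximal $L^p$-regularity on each edge simultaneously. Combining the edge-wise maximal regularity with the coupling via \eqref{pgammadefnewintercon}, and absorbing the lower-order terms $-v_j\partial_{x_j}u_j$ and $c_j u_j$ by Young's inequality (first on a short interval $[0,\tau]$, then iterating to $[0,T]$), gives
\[
\|u\|_{W^{2,1}_p(\Omega)}\leq C\bigl(\|g\|_{L^p(\Omega)}+\|u^0\|_{W^2_p(G)}\bigr),
\]
provided $u^0\in W^2_p(G)$ lies in the domain (which is guaranteed by the implicit assumption that $u^0$ satisfies the boundary and interface conditions).

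For the Schauder estimate in (ii), I would argue edge by edge. Away from the interior vertices, $u_j$ solves a scalar parabolic equation on $(0,l_j)$ with Hölder coefficients, so the interior and boundary Schauder estimates of \cite{LSU} yield $C^{2+\alpha,1+\alpha/2}$ control. The compatibility hypotheses on $u^0$ (matching \eqref{bcupstreamzf}, \eqref{bcdownstreamzf}, \eqref{2-interconds} at $t=0$, plus the second-order matching at the interior vertices) ensure there is no loss of regularity at the parabolic corner $\{t=0\}\cap E$. Near an interior vertex $e_i$, I would localize and view the continuity condition \eqref{1-interconds} together with the linear Kirchhoff condition \eqref{2-interconds} as an oblique-type interface condition, and reduce to a star-graph model problem as in \cite{vonbelow1988}; since the local model has smooth coefficients and satisfies the Lopatinskii condition, its Schauder estimate transfers back to $u_j$. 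Summing the edge-wise estimates against the norm $\|u\|_{C^{2+\alpha,1+\alpha/2}(\Omega)}=\sum_j\|u_j\|_{C^{2+\alpha,1+\alpha/2}(\Omega_j)}$ produces the claimed bound.

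The main obstacle is the junction analysis: at a vertex of valency $m\geq 3$, half-space reflection is unavailable, so neither the $L^p$ maximal regularity nor the Schauder estimate follows from a one-dimensional black box. For $L^p$ this is circumvented by the self-adjoint form approach, which internalizes the Kirchhoff condition into the domain of $\mathcal{A}$; for Schauder one must either quote the network-tailored results of \cite{vonbelow1988,vonbelow1991} or verify a Lopatinskii-type condition for the star-graph model problem, which is where the careful ordering of edges and the weight choice $\eta_j$ become essential.
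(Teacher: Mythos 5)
Your proposal is correct, and its backbone coincides with the paper's: the same weighted change of variables $p_j,\zeta_j$ with the constants $\eta_j$ chosen to turn \eqref{2-interconds} into \eqref{pgammadefnewintercon}, the same self-adjoint operator $\mathcal{A}$ with respect to \eqref{scalarpdef} generating a compact, contractive, positive $C_0$-semigroup as in \cite{DMugnolo,WArendt,Fijavz}, and for the Schauder estimate in (ii) essentially the same deferral to the network-tailored analysis of \cite{vonbelow1988}. Where you genuinely diverge is part (i). The paper does not use maximal regularity at all: following \cite{vonbelow1988}, it rewrites the network problem as an equivalent well-posed initial boundary value problem for a \emph{parabolic system} on a single interval (the edges become components of a vector-valued unknown, and the vertex conditions become coupled boundary conditions satisfying the complementarity condition), and then quotes the $L^p$ a priori estimates of \cite{LSU,So} for such systems. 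Your route instead keeps the problem on the graph and invokes maximal $L^p$-regularity of the self-adjoint generator $\mathcal{A}$, with the Kirchhoff conditions internalized in its domain, treating $c(\cdot,t)u$ as a bounded perturbation. Both work; the paper's reduction has the advantage that the Lopatinskii/complementarity verification and the corner compatibility bookkeeping are done once for a classical object (a parabolic system in one space variable), which also yields (ii) from the same reduction, whereas your operator-theoretic route is cleaner for (i) (for this class of positive, analytic, $L^q$-contractive semigroups maximal regularity is automatic, e.g.\ by Lamberton's theorem, so no Lopatinskii check is needed) but does not by itself produce the Schauder estimate, forcing you back to the localization/star-graph analysis for (ii). One caution on your phrasing: maximal regularity should be applied to $\mathcal{A}$ on $L^p(G)$ globally rather than ``edge-wise and then combined'' --- an edge-by-edge argument would leave the junction traces as unknown boundary data and require an additional fixed-point step --- and note that after the weighted transformation the drift $-v_j\partial_{x_j}$ is already part of $\mathcal{A}_j$, so the only lower-order term left to absorb is $c_ju_j$. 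You are also right to flag that $u^0\in W^2_p(G)$ must additionally satisfy the vertex conditions for (i); the paper leaves this implicit.
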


\begin{proof}
Note that (\ref{model-a}) with interface condition (\ref{2-interconds}) can be written as (\ref{pgammadefnewL})
with condition (\ref{pgammadefnewintercon}), and the differential operator $\mathcal{A}$ is self-adjoint and generates a compact,
contractive and positive strongly continuous semigroup. Thus, by adjusting the scalar product to (\ref{scalarpdef}) and
defining corresponding functions and operators based on the boundary conditions in (\ref{model-aa}),
the analysis in \cite{DMugnolo} (see also \cite{WArendt,Fijavz})
can be borrowed to show that problem (\ref{model-a})-(\ref{model-aa}) with the initial data
$u^0\in L^p(G)$ has a unique classical solution for $t>0$ that continuously depends on the initial data.

The Schauder estimates in the assertion (ii) have been derived by \cite{vonbelow1988}.
The $L^p$-estimates in the assertion (i) follow similarly as in the proof of the Theorem in \cite{vonbelow1988}.
The proof consists mainly of showing that the initial boundary value problem (\ref{model-a})-(\ref{model-aa}) is equivalent to a
well-stated initial boundary value problem for a parabolic system, where
the $L^p$-estimate results of \cite{LSU,So} for such a parabolic system can be applied. The details are omitted here.
\end{proof}

\subsection{Nonlinear problem {\bf (IBVP)}}
This subsection is devoted to the existence, uniqueness and positivity of solutions to the nonlinear problem {\bf (IBVP)}. Assumptions {\bf [H1]} and {\bf [H2]} are assumed throughout this section.

We begin by introducing the definition of upper and lower solution associated with problem {\bf (IBVP)} as follows.

\begin{Definition}\label{defuppsopara} \begin{enumerate} \item[] A function $\overline{u}\in C^{2,1}(\Omega)$ is an upper solution of problem {\bf (IBVP)}
if $\overline{u}$ satisfies the following conditions
 \begin{equation}\label{modelssupsol}
 \begin{cases}
 \D
\frac{\partial \overline u_j}{\partial t}-D_j\frac{\partial^2
{\overline{u}}_j}{\partial x_j^2}+v_j\frac{\partial
{\overline{u}}_j}{\partial x_j}
 \geq f_j(x_j,{\overline{u}}_j){\overline{u}}_j,  & x_j\in (0,l_j), \ j\in I_{N-1},\ t\in(0,T),\\
 \displaystyle
\alpha_{j,1}\overline{u}_j(e_i,t)-\beta_{j,1}\frac{\partial \overline{u}_j}{\partial x_j}(e_i,t)\geq 0, &  \forall e_i\in E_u, \ t\in(0,T),\\
 \displaystyle
 \alpha_{j,2}\overline{u}_j(e_i,t)+\beta_{j,2}\frac{\partial \overline{u}_j}{\partial x_j}(e_i,t)\geq 0, &\forall e_i\in E_d, \,\ t\in(0,T),
 \\
\overline{u}_{i_1}(e_i,t)=\cdots=\overline{u}_{i_m}(e_i,t),  & \forall e_i\in E_r,\ t\in(0,T),\\
\sum_{j=i_1}^{i_m} d_{ij}A_jD_j\frac{\partial
\overline{u}_j}{\partial x_j}(e_i,t)\geq 0, & \forall e_i\in E_r,\ t\in(0,T),\\
 \overline
u_j(x_j,0)\geq u^0_j(x_j),   &  x_j\in (0,l_j),\ \ j\in I_{N-1}.
 \end{cases}
 \end{equation}

 \item[] A function $\underline{u}(x,t)\in C^{2,1}(\Omega)$ is a lower solution of problem {\bf (IBVP)}
if $\underline{u}$ satisfies the following conditions:
\begin{equation}\label{deflowsopara}
 \begin{cases}
 \D
 \frac{\partial \underline u_j}{\partial t}-D_j\frac{\partial^2 {\underline{u}}_j}{\partial x_j^2}+v_j\frac{\partial {\underline{u}}_j}{\partial x_j}
 \leq f_j(x_j,{\underline{u}}_j){\underline{u}}_j, &x_j\in (0,l_j), \ j\in I_{N-1},\ t\in(0,T), \\
 \displaystyle
\alpha_{j,1}\underline{u}_j(e_i,t)-\beta_{j,1}\frac{\partial \underline{u}_j}{\partial x_j}(e_i,t)\leq 0, &  \forall e_i\in E_u, \ t\in(0,T),\\
 \displaystyle
\alpha_{j,2}\underline{u}_j(e_i,t)+\beta_{j,2}\frac{\partial \underline{u}_j}{\partial x_j}(e_i,t)\leq 0,  & \forall e_i\in E_d,\ t\in(0,T),  \\
 \displaystyle
 \underline{u}_{i_1}(e_i,t)=\cdots=\underline{u}_{i_m}(e_i,t), &\forall e_i\in E_r,\ t\in(0,T),
 \\  \sum_{j=i_1}^{i_m} d_{ij}A_jD_j\frac{\partial \underline{u}_j}{\partial x_j}(e_i,t)\leq 0, & \forall e_i\in E_r,\ t\in(0,T),\\
  \underline u_j(x_j,0)\leq u^0_j(x_j), &x_j\in (0,l_j),\ \ j\in I_{N-1}.
 \end{cases}
 \end{equation}
\end{enumerate}
\end{Definition}

According to the definition of upper and lower solutions, one can easily see from Lemmas \ref{signderivative} and \ref{compprinciple} that

\begin{Lemma}\label{upper-lower-property} Assume that $\overline{u}$ and $\underline{u}$
is a pair of upper and lower solutions of problem {\bf (IBVP)} and $\overline{u}\geq\underline u$ on $G\times\{0\}$.
Then $\overline{u}\geq\underline u$ on $\Omega$. If additionally $\overline{u}\geq,\not\equiv\underline u$ on $G\times\{0\}$,
then $\overline{u}>\underline u$ on $(G\backslash E_0)\times(0,T]$.
\end{Lemma}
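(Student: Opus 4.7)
The plan is to reduce the claim directly to the linear comparison principle already proved in Lemma \ref{compprinciple}, via the standard device of linearizing the nonlinear reaction through a Lipschitz quotient. First I would set $w_j := \overline{u}_j - \underline{u}_j$ on each edge $k_j$ and subtract the defining inequalities of $\underline{u}$ in (\ref{deflowsopara}) from those of $\overline{u}$ in (\ref{modelssupsol}). Since the Robin-type boundary conditions at $E_u \cup E_d$ and the Kirchhoff-type interface conditions at $E_r$ are all linear in their arguments, $w$ automatically satisfies $\alpha_{j,1}w_j - \beta_{j,1}\partial_{x_j}w_j \geq 0$ at $E_u$, $\alpha_{j,2}w_j + \beta_{j,2}\partial_{x_j}w_j \geq 0$ at $E_d$, the flux-balance inequality $\sum_{j=i_1}^{i_m} d_{ij}A_jD_j \partial_{x_j}w_j(e_i,t) \geq 0$ together with continuity at each $e_i\in E_r$, and $w(\cdot,0)\geq 0$ on $G$. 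On each edge $w$ obeys a differential inequality whose right-hand side is the nonlinear remainder $f_j(x_j,\overline{u}_j)\overline{u}_j - f_j(x_j,\underline{u}_j)\underline{u}_j$.

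Next I would linearize this remainder using hypothesis {\bf [H2]}: define
\[
\tilde{c}_j(x_j,t) := \dfrac{f_j(x_j,\overline{u}_j)\overline{u}_j - f_j(x_j,\underline{u}_j)\underline{u}_j}{\overline{u}_j - \underline{u}_j}
\]
wherever $\overline{u}_j \neq \underline{u}_j$ and set $\tilde{c}_j = 0$ otherwise. By {\bf [H2]}, $\|\tilde{c}_j\|_{L^\infty(\Omega)} \leq L_j$. Substituting, $w$ satisfies on each $k_j$
\[
\frac{\partial w_j}{\partial t} - D_j\frac{\partial^2 w_j}{\partial x_j^2} + v_j\frac{\partial w_j}{\partial x_j} + c_j(x_j,t)\,w_j \;\geq\; 0
\]
with the bounded coefficient $c_j := -\tilde{c}_j$, which together with the boundary, interface and initial inequalities listed above is precisely the hypothesis of Lemma \ref{compprinciple}. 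Applying Lemma \ref{compprinciple} to $w$ yields $w \geq 0$ on $\Omega$, i.e.\ $\overline{u}\geq\underline{u}$ on $\Omega$. For the second assertion, if in addition $w(\cdot,0)\not\equiv 0$, then the strict-positivity conclusion of Lemma \ref{compprinciple} — which internally invokes the strong maximum principle (Lemma \ref{maxiprinciple}) and the Hopf boundary lemma (Lemma \ref{signderivative}) — delivers $w > 0$ on $(G\setminus E_0)\times(0,T]$.

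The only delicate point is verifying that the linearized coefficient $\tilde{c}_j$ is indeed a bounded (and measurable) function on $\Omega$, which is exactly the content of the Lipschitz-continuity clause in {\bf [H2]}; no continuity or sign property of $\tilde{c}_j$ is needed, since the comparison principle Lemma \ref{compprinciple} admits an arbitrary bounded $c$. Beyond this observation, the lemma is a direct corollary of the already-established linear comparison principle, and no fresh maximum-principle or semigroup argument is required.
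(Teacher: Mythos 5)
Your proposal is correct and follows essentially the route the paper intends: the paper states the lemma as an immediate consequence of Lemmas \ref{signderivative} and \ref{compprinciple}, and your reduction of $w=\overline{u}-\underline{u}$ to the linear comparison principle via the Lipschitz quotient $\tilde{c}_j$ (bounded by $L_j$ from {\bf [H2]}) is exactly the standard argument being invoked. Your explicit check that Lemma \ref{compprinciple} tolerates an arbitrary bounded, possibly discontinuous coefficient $c$ is a worthwhile detail the paper leaves implicit.
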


With the aid of the above preliminaries, we are now able to state the main result of this subsection.

\begin{Theorem}\label{globalexistence} For any $u^0\in L^p(G)\, (p>1)$ with $u^0\geq0$ a.e. in $G$,
problem {\bf (IBVP)} admits a unique classical solution $u$ for all $t>0$, which satisfies $u\geq0$ in $\Omega$. If additionally, $u^0\not\equiv0$, then
$u(x,t)>0$ for all $t>0$ and $x\in G\setminus E_0$.
\end{Theorem}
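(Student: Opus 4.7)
The plan is to combine the linear parabolic theory of Theorem \ref{linearexistence} with the upper--lower solution framework of Definition \ref{defuppsopara} and Lemma \ref{upper-lower-property}, plus the strong maximum principle of Lemma \ref{maxiprinciple} for positivity. The first step is to exhibit an explicit ordered pair of sub/supersolutions. By hypothesis \textbf{[H2]} the constant $\bar M:=\max\{\max_j M_j,\,\|u^0\|_{L^\infty(G)}\}$ (if $u^0\in L^\infty$; otherwise replace it by a truncation bound and handle $L^p$ data by approximation) satisfies $f_j(x,\bar M)\bar M\le 0$, and since all $\alpha_{j,k}\ge0$, the parabolic inequality, the Robin inequalities at $E_u\cup E_d$ and the Kirchhoff equalities at $E_r$ all reduce to trivialities. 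Thus $\overline u\equiv\bar M$ is an upper solution and $\underline u\equiv0$ is clearly a lower solution.

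Next, I treat smooth initial data $u^0\in C^{2+\alpha}(G)$ with $0\le u^0\le\bar M$ satisfying the compatibility conditions required by Theorem \ref{linearexistence}(ii) via a monotone iteration. Pick $L>0$ large enough (possible by the Lipschitz assumption in \textbf{[H2]}) so that $u\mapsto f_j(x,u)u+Lu$ is nondecreasing on $[0,\bar M]$, set $u^{(0)}\equiv\bar M$, and let $u^{(n+1)}$ solve the linear problem
\begin{equation*}
\partial_t u^{(n+1)}_j-D_j\partial_{x_j}^2 u^{(n+1)}_j+v_j\partial_{x_j}u^{(n+1)}_j+L u^{(n+1)}_j
= f_j(x_j,u^{(n)}_j)u^{(n)}_j+L u^{(n)}_j
\end{equation*}
with the same boundary/interface/initial data as \textbf{(IBVP)}; Theorem \ref{linearexistence} supplies existence, uniqueness and Schauder estimates for each iterate. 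Lemma \ref{compprinciple} shows $\{u^{(n)}\}$ is monotonically nonincreasing and trapped in $[0,\bar M]$, and the uniform Schauder bounds give a classical limit $u$. Running the iteration from below, starting with $u^{(0)}\equiv0$, produces a monotonically nondecreasing limit; Lemma \ref{compprinciple} forces the two limits to coincide, giving existence \emph{and} uniqueness of a global-in-time classical solution (the global bound $0\le u\le\bar M$ rules out blow-up).

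For general $u^0\in L^p(G)$ with $u^0\ge0$, I would approximate by smooth $u^{0,k}$ adapted to the graph so as to meet the interface/boundary compatibility conditions of Theorem \ref{linearexistence}(ii), with $0\le u^{0,k}\le\bar M$ and $u^{0,k}\to u^0$ in $L^p(G)$. The corresponding solutions $u^k$ from the previous step obey $0\le u^k\le\bar M$, and their pairwise differences solve a linear parabolic problem with $L^\infty$ coefficients (produced by the Lipschitz property of $f_j(\cdot,u)u$ in \textbf{[H2]}); the $L^p$ estimate of Theorem \ref{linearexistence}(i) shows $\{u^k\}$ is Cauchy in $W^{2,1}_p(\Omega)$, yielding a strong solution $u$, which is classical for $t>0$ by the smoothing effect of the linear semigroup constructed in Theorem \ref{linearexistence}. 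Uniqueness at the $L^p$ level follows by the same linear comparison applied to the difference of two candidate solutions.

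Finally, nonnegativity $u\ge0$ follows from Lemma \ref{compprinciple} comparing $u$ with the zero solution (the reaction is a Lipschitz perturbation). If $u^0\not\equiv0$, set $c(x,t):=-f(x,u(x,t))$, which is bounded since $u$ is bounded and $f_j$ is continuous; applying Lemma \ref{maxiprinciple} (after the standard shift $v=e^{-\ell t}u$ used in the proof of Lemma \ref{compprinciple} to make the zeroth-order coefficient nonnegative) together with Lemma \ref{signderivative} upgrades $u\ge0$ to $u>0$ on $(G\setminus E_0)\times(0,T]$ for every $T>0$. The main obstacle I anticipate is the $L^p$ approximation step: constructing $u^{0,k}$ which simultaneously respect the Robin boundary conditions, the Kirchhoff interface conditions, and the compatibility of Theorem \ref{linearexistence}(ii) requires a mollification procedure that is compatible with the network topology, and the subsequent passage to the limit must retain the Kirchhoff condition at junctions where a generic $L^p$ function has no trace.
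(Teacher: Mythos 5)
Your proposal follows essentially the same route as the paper: the paper's proof takes $0$ and the constant $M^\ast=\max_j M_j$ as the ordered pair of lower and upper solutions, invokes Theorem \ref{linearexistence} and Lemma \ref{upper-lower-property} to run the standard monotone iteration (citing \cite{Pao,RDbook} for details), and deduces uniqueness and positivity from Lemma \ref{upper-lower-property}. Your write-up is a correct, more detailed version of the same argument, and the $L^p$-data approximation issue you flag at the end is a genuine subtlety that the paper simply leaves to the semigroup well-posedness of Theorem \ref{linearexistence} rather than addressing explicitly.
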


\begin{proof}
Note that $0$ and $M^\ast=\max\limits_{j\in I_{N-1}}\{M_j\}$ forms a pair of upper and lower solutions to {\bf (IBVP)}.
Thus, in light of Theorem \ref{linearexistence} and Lemma \ref{upper-lower-property}, the existence of
strong solution follows from the standard iteration of lower and upper solutions; the obtained solution is classical due to Theorem \ref{linearexistence} again.
We omit the details of the proof here and refer interesting readers to \cite{Pao,RDbook}.
The uniqueness and positivity of solutions are obvious consequences of Lemma \ref{upper-lower-property}. The proof is thus complete.
\end{proof}
From now on, given $u^0\in L^p(G)$ for some $p>1$, denote by $u(x,t,u^0)$ the unique solution to problem {\bf (IBVP)}. Clearly, we have

\begin{Lemma}\label{Qtmonotone} For any $\psi_1,\psi_2\in L^p(G)$ with $\psi_1\geq,\not\equiv
\psi_2$ on $G$, $u(x,t,\psi_1)> u(x,t,\psi_2)$ for all $x\in G\setminus E_0$  and $t>0$.
\end{Lemma}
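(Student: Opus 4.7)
The plan is to reduce the assertion to the linear comparison principle of Lemma \ref{compprinciple} applied to the difference
\[
 w(x,t) := u(x,t,\psi_1) - u(x,t,\psi_2).
\]
Denote $u^k := u(\cdot,\cdot,\psi_k)$ for $k=1,2$ and, on each edge $k_j$, define
\[
 c_j(x_j,t) := \frac{f_j(x_j,u^1_j)u^1_j - f_j(x_j,u^2_j)u^2_j}{u^1_j - u^2_j}
\]
wherever the denominator is nonzero, and extend $c_j$ by $0$ otherwise. By the Lipschitz hypothesis in \textbf{[H2]}, $|c_j|\le L_j$, so $c\in L^\infty(\Omega)$. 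Subtracting the equations (\ref{model}) satisfied by $u^1$ and $u^2$ then gives the linear parabolic equation
\[
 \frac{\partial w_j}{\partial t} - D_j\frac{\partial^2 w_j}{\partial x_j^2} + v_j\frac{\partial w_j}{\partial x_j} - c_j(x_j,t)\,w_j = 0
\]
on every edge. Because \eqref{bcupstreamzf}, \eqref{bcdownstreamzf}, and \eqref{interconds} are linear and homogeneous, $w$ inherits them as equalities, which in particular deliver the nonstrict inequalities required by Lemma \ref{compprinciple}.

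Next I would address the low regularity of the initial data. Since $\psi_1,\psi_2\in L^p(G)$, the function $w(\cdot,0)$ need not be classical, so I cannot apply Lemma \ref{compprinciple} on $[0,T]$ directly. However, by Theorem \ref{globalexistence} both $u^1$ and $u^2$ are classical solutions on $[t_0,T]$ for any $t_0>0$, so $w(\cdot,t_0)\in C^2(G)$. Viewing $u^1$ as an upper solution and $u^2$ as a lower solution in the sense of Definition \ref{defuppsopara} (after a routine smooth approximation of $L^p$ initial data), Lemma \ref{upper-lower-property} yields $w\ge 0$ throughout $\Omega$, and continuous dependence on initial data from Theorem \ref{linearexistence} forces $w(\cdot,t_0)\not\equiv 0$ for all sufficiently small $t_0>0$: otherwise $\psi_1=\psi_2$ in $L^p(G)$, contrary to assumption.

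With $w(\cdot,t_0)\ge,\not\equiv 0$ in $C^2(G)$, I would apply Lemma \ref{compprinciple} to $w$ on $[t_0,T]$ with the bounded zero-order coefficient $-c_j$. All Robin, Dirichlet, continuity, and Kirchhoff inequalities are satisfied as equalities, so the conclusion of that lemma gives $w(x,t)>0$ for every $x\in G\setminus E_0$ and $t\in(t_0,T]$. Since $t_0>0$ was arbitrary, the strict inequality holds for all $t>0$, as claimed.

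The only nontrivial step is the bookkeeping at $t=0$: one has to bootstrap from $L^p$ initial data to a smooth, nontrivial $w(\cdot,t_0)$ before the network strong maximum principle and Hopf lemma (Lemmas \ref{maxiprinciple} and \ref{signderivative}, encoded in Lemma \ref{compprinciple}) can be invoked. Once past that point, the argument is a direct transcription of the standard parabolic strong maximum principle to the network setting already set up in the Appendix.
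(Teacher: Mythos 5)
Your proof is correct and follows the same route the paper intends: the paper offers no written proof (the lemma is stated as a ``clear'' consequence of the upper--lower solution comparison, Lemma \ref{upper-lower-property}, which itself rests on the Lipschitz difference-quotient linearization and the comparison principle of Lemma \ref{compprinciple}), and your argument is precisely that chain made explicit. Your extra step of shifting to $t_0>0$ to handle the $L^p$ initial data before invoking the strong maximum principle is a reasonable patch for a point the paper glosses over.
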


We also have the following observation.

\begin{Lemma}\label{Qtstrsubhomo} If {\bf [H3]} is also satisfied, for any $u^0\in L^p$ with $u^0\geq0$ on $G$ and $\lambda\in (0,1)$,
$u(x,t,\lambda u^0)\geq \lambda u(x,t,u^0)$ on $G$ for all $ t>0$.
 \end{Lemma}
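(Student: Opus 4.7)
The plan is to verify that $w(x,t) := \lambda\, u(x,t,u^0)$ is a lower solution of problem \textbf{(IBVP)} with initial data $\lambda u^0$, and then appeal to the comparison principle (Lemma \ref{upper-lower-property}) to conclude that $u(x,t,\lambda u^0) \geq w(x,t) = \lambda u(x,t,u^0)$.

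First, by Theorem \ref{globalexistence}, the solution $u(x,t,u^0)$ exists, is classical, and satisfies $u(x,t,u^0)\geq 0$ on $\Omega$; hence $w_j = \lambda u_j \geq 0$ for each $j \in I_{N-1}$. Differentiating $w_j$ in $t$ and using the equation satisfied by $u_j$ gives
\begin{equation*}
\frac{\partial w_j}{\partial t} - D_j\frac{\partial^2 w_j}{\partial x_j^2} + v_j\frac{\partial w_j}{\partial x_j}
= \lambda f_j(x_j, u_j)\,u_j = f_j(x_j, u_j)\,w_j.
\end{equation*}
Since $0\leq w_j = \lambda u_j \leq u_j$ and $f_j(x_j,\cdot)$ is monotonically decreasing by \textbf{[H3]}, we have $f_j(x_j, u_j)\leq f_j(x_j, w_j)$, so $f_j(x_j, u_j)\,w_j \leq f_j(x_j, w_j)\,w_j$. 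This yields the differential inequality required in \eqref{deflowsopara}.

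Next I check the boundary and interface conditions. The Robin-type conditions \eqref{bcupstreamzf} and \eqref{bcdownstreamzf} and the Kirchhoff–continuity conditions \eqref{interconds} satisfied by $u$ are all linear and homogeneous, hence they are preserved under multiplication by the constant $\lambda > 0$; thus $w$ satisfies them as equalities, which in particular implies the inequalities appearing in Definition \ref{defuppsopara}. At $t=0$, $w_j(x_j,0) = \lambda u^0_j(x_j)$, so $w$ matches the initial data of the problem for $u(\cdot,\cdot,\lambda u^0)$. Altogether, $w$ is a lower solution of \textbf{(IBVP)} with initial value $\lambda u^0$.

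Applying Lemma \ref{upper-lower-property} (with the upper solution taken to be $u(x,t,\lambda u^0)$ itself) yields $u(x,t,\lambda u^0) \geq w(x,t) = \lambda u(x,t,u^0)$ on $\Omega$, which is the desired inequality. The only subtle point is ensuring the sign of the nonlinear remainder is consistent: this requires both $w_j \geq 0$ (which follows from Theorem \ref{globalexistence}) and the monotonicity assumption \textbf{[H3]} used in the correct direction. For $u^0 \in L^p(G)$ (rather than smooth), the above calculation should first be performed for smooth approximations $u^{0,n}\to u^0$ and passed to the limit via the continuous dependence asserted in Theorem \ref{linearexistence}, which is the only mildly technical step of the argument.
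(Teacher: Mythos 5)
Your proof is correct and follows essentially the same route as the paper: show that $\lambda u(x,t,u^0)$ is a lower solution of \textbf{(IBVP)} with initial data $\lambda u^0$ by using the monotonicity of $f_j$ in $u_j$ together with $0\leq \lambda u_j\leq u_j$, then invoke Lemma \ref{upper-lower-property}. Your explicit verification of the (linear, homogeneous) boundary and interface conditions and your remark on approximating $L^p$ initial data are slightly more detailed than the paper's argument, but the underlying idea is identical.
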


\begin{proof}
For any $\lambda\in (0,1)$, clearly $\lambda u(x,t,u^0)$ satisfies
 $$
\begin{array}{ll}
\frac{\partial \lambda u_j}{\partial t}&= D_j\frac{\partial^2 \lambda u_j}{\partial x_j^2}
-v_j\frac{\partial \lambda u_j}{\partial x_j}+ f_j(x_j,u_j)\lambda u_j, \\
&\leq  D_j\frac{\partial^2 \lambda u_j}{\partial x_j^2}-v_j\frac{\partial \lambda u_j}{\partial x_j}+
f_j(x_j,\lambda u_j)\lambda u_j, \, x_j\in (0,l_j),\,\ j\in I_{N-1},\,t>0,
\end{array}
$$
where we used assumption {\bf [H2]}. It follows from Lemma \ref{upper-lower-property} that $\lambda u(x,t,u^0)\leq u(x,t,\lambda u^0)$ on $G$ for all $ t>0$.
\end{proof}

\subsection{Theory of elliptic equations}

It is clear that Lemmas \ref{maxiprinciple} and \ref{signderivative} imply the following
strong maximum principle for elliptic equations and Hopf type boundary lemma.

\begin{Lemma}\label{maxiprinciple-elliptic} Assume that $c(x)\geq0$ and is bounded from above on $G$. Let $u\in C(G)\cap C^{2}(G\backslash E_b)$ satisfy
 $$
 -D_j\frac{\partial^2 u_j}{\partial x_j^2}+v_j\frac{\partial u_j}{\partial x_j}+c_j(x_j)u_j\leq 0\ (\geq0),
 \,\, x_j\in (0,l_j),\,\,j\in I_{N-1},
 $$
and
 $$
 {u}_{i_1}(e_i)=\cdots={u}_{i_m}(e_i),\ \ \sum_{j=i_1}^{i_m} d_{ij}A_j D_j\frac{\partial u_j}{\partial x_j}(e_i)\leq0\ (\geq0),\ \ \forall e_i\in E_r.
 $$
Suppose that $u\leq M\ (u\geq m)$ on $G$ and $u(x_0)=M\ (u(x_0)=M)$ at some point $x_0\in G\backslash E_b$. If $c(x)\not\equiv0$,
suppose that $M\geq0\ (m\leq 0)$. Then
 $$
 u=M\ (u=m)\ \ \mbox{on}\ G.
 $$
\end{Lemma}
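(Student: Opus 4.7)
The plan is to derive the elliptic strong maximum principle by a ``freezing in time'' reduction to the parabolic case that has already been established in Lemma \ref{maxiprinciple}. Concretely, given $u\in C(G)\cap C^2(G\setminus E_b)$ satisfying the hypotheses of Lemma \ref{maxiprinciple-elliptic}, I would extend $u$ trivially in time by setting $\tilde u(x,t):=u(x)$ on $\Omega=G\times[0,T]$ for some fixed $T>0$. Since $u$ is time-independent, $\tilde u\in C(\Omega)\cap C^{2,1}(\Omega_p)$ with $\partial_t\tilde u\equiv 0$, so the spatial differential inequality
\[
 \partial_t\tilde u_j - D_j\tilde u_{j,x_jx_j} + v_j\tilde u_{j,x_j} + c_j(x_j)\tilde u_j \;\le\; 0
\]
holds on each $\Omega_j$, and the Kirchhoff inequality $\sum_{j=i_1}^{i_m} d_{ij}A_jD_j\tilde u_{j,x_j}(e_i,t)\le 0$ is inherited from the corresponding inequality for $u$ at each $e_i\in E_r$ and every $t\in(0,T]$. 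The coefficient $c(x)$ is bounded and non-negative by assumption, so the hypotheses on the coefficient in Lemma \ref{maxiprinciple} are met.

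Next, I would transfer the maximum. From $u\le M$ on $G$ we get $\tilde u\le M$ on $\Omega$, and from $u(x_0)=M$ with $x_0\in G\setminus E_b$ we get $\tilde u(x_0,t_0)=M$ at any fixed $t_0\in(0,T]$, which gives $(x_0,t_0)\in\Omega_p=(G\setminus E_b)\times(0,T]$. Lemma \ref{maxiprinciple} then applies (note the side condition $M\ge 0$ when $c\not\equiv 0$ is precisely the same condition already assumed here) and yields $\tilde u\equiv M$ on $G\times[0,t_0]$. Restricting to a single time slice gives $u\equiv M$ on $G$, which is the claim in the ``$\le$'' case; the ``$\ge$'' case is symmetric, obtained either by applying the same argument to $-u$ with $-m$ in place of $M$ or by invoking the parenthetical version of Lemma \ref{maxiprinciple} directly.

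Essentially no obstacle is expected: the whole content of the elliptic strong maximum principle on the network has already been absorbed into Lemma \ref{maxiprinciple}, whose proof handled both the case of an interior edge point (via the classical Euclidean strong maximum principle on $(0,l_j)$) and the case of a ramification vertex (via a Hopf-type argument that contradicts the Kirchhoff inequality). The only thing to double-check is that the time-independent extension satisfies all regularity requirements of $C(\Omega)\cap C^{2,1}(\Omega_p)$, which is immediate, and that every boundary/interface hypothesis carries over to the parabolic setting for all $t$, which is also immediate because the hypotheses are pointwise in $x$ and independent of $t$.
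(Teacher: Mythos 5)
Your proposal is correct and is essentially the paper's own argument made explicit: the paper simply asserts that Lemma \ref{maxiprinciple-elliptic} follows from the parabolic Lemma \ref{maxiprinciple}, and the time-independent extension $\tilde u(x,t)=u(x)$ is exactly the standard way to realize that implication. All the regularity and interface hypotheses transfer as you describe, so nothing further is needed.
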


\begin{Lemma}\label{signderivative-elliptic} Assume that $c(x)\geq0$ and is bounded from above on $G$. Let $u\in C(G)\cap C^{2}(G\backslash E_b)$ satisfy
 $$
 -D_j\frac{\partial^2 u_j}{\partial x_j^2}+v_j\frac{\partial u_j}{\partial x_j}+c(x_j)u_j\leq 0\ \, (\geq 0),\ \
 x_j\in(0,l_j),\ j\in I_{N-1}.
 $$
Suppose that $u$ is continuously differentiable at some point $e_i\in E_b$, $u(e_i)=M$, and $u(x)<M$ ($>M$) for all $x\in
G$. If $c\not\equiv0$, assume that $M\geq0\ (M\leq0)$. Then $d_{ij}u_{x_j}(e_i)>0\ (<0)$.
\end{Lemma}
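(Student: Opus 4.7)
The plan is to reduce the elliptic Hopf boundary lemma directly to its parabolic analogue, Lemma \ref{signderivative}, by viewing $u$ as a stationary (time-independent) solution of the associated parabolic problem. I treat the ``$\leq 0$'' case in which $u$ attains a maximum $M$ at the boundary vertex $e_i$; the ``$\geq 0$'' case follows either by the symmetric argument or by applying the first case to $-u$.

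First, fix any $T>0$ and define $\tilde u:\Omega\to\mathbb{R}$ and $\tilde c:\Omega\to\mathbb{R}$ by $\tilde u(x,t):=u(x)$, $\tilde c(x,t):=c(x)$. Then $\tilde u\in C(\Omega)\cap C^{2,1}(\Omega_p)$, $\tilde c$ is nonnegative and bounded from above on $\Omega$, and because $\partial_t\tilde u\equiv 0$, on each edge $k_j$ one has
\begin{equation*}
\frac{\partial \tilde u_j}{\partial t}-D_j\frac{\partial^2 \tilde u_j}{\partial x_j^2}+v_j\frac{\partial \tilde u_j}{\partial x_j}+\tilde c_j(x_j,t)\tilde u_j \;=\; -D_j u_j''+v_j u_j'+c_j u_j \;\leq\; 0,
\end{equation*}
so the parabolic differential inequality of Lemma \ref{signderivative} is satisfied.

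Second, I verify the remaining pointwise hypotheses at $(e_i,t_\ast)$ for an arbitrarily chosen $t_\ast\in(0,T]$. Continuous differentiability of $u$ at $e_i$ gives continuous differentiability of $\tilde u$ at $(e_i,t_\ast)$; moreover $\tilde u(e_i,t_\ast)=u(e_i)=M$, and for every $(x,t)\in\Omega_p=(G\setminus E_b)\times(0,T]$ we have $\tilde u(x,t)=u(x)<M$, using the hypothesis $u(x)<M$ on $G\setminus\{e_i\}$ together with $G\setminus E_b\subset G\setminus\{e_i\}$. The sign condition ``$M\geq 0$ when $c\not\equiv 0$'' carries over verbatim. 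Lemma \ref{signderivative} then yields $d_{ij}\tilde u_{x_j}(e_i,t_\ast)>0$, and since $\tilde u_{x_j}(e_i,t_\ast)=u_{x_j}(e_i)$, this is exactly the desired conclusion $d_{ij}u_{x_j}(e_i)>0$.

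\textbf{Main obstacle.} The argument is largely bookkeeping; the only subtlety is aligning the hypothesis ``$u(x)<M$ for all $x\in G$'' (which must be read as ``for all $x\in G\setminus\{e_i\}$'', since $u(e_i)=M$) with the parabolic requirement of strict inequality on the parabolic interior $\Omega_p$. Because $\Omega_p$ excludes the boundary vertices $E_b$, the inclusion $G\setminus E_b\subset G\setminus\{e_i\}$ settles this without any further analysis, so no genuinely new input beyond Lemma \ref{signderivative} is needed.
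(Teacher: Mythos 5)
Your proposal is correct and matches the paper's (implicit) argument: the paper simply asserts that Lemma \ref{signderivative-elliptic} follows from the parabolic Hopf boundary lemma (Lemma \ref{signderivative}), and your reduction via the stationary extension $\tilde u(x,t)=u(x)$ is exactly the intended route, with the hypotheses checked carefully (in particular the point that $\Omega_p$ excludes $E_b$, so the strict inequality on $\Omega_p$ is inherited from $u<M$ on $G\setminus\{e_i\}$). No gaps.
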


The following comparison principle immediately follows from Lemmas \ref{maxiprinciple-elliptic} and \ref{signderivative-elliptic}.

 \begin{Lemma}\label{positivesolell} Assume that $c(x)\geq0$, is bounded from above on $\Omega$
 and $c_j\beta_{j,i}\not\equiv0$ for some $j\in I_{N-1},\,i\in\{1,2\}$. Let $u\in C(G)\cap C^{2}(G\backslash E_b)$ satisfy
 \begin{equation}
 \begin{cases}
  \displaystyle
 -D_j\frac{\partial^2 {{u}}_j}{\partial x_j^2}+v_j\frac{\partial {{u}}_j}{\partial x_j}+c_j(x_j)u_j\geq0,
 \, & x_j\in (0,l_j), \ j\in I_{N-1},\\
 \displaystyle
 \alpha_{j,1}{u}_j(e_i)-\beta_{j,1}\frac{\partial {u}_j}{\partial x_j}(e_i)\geq 0, &\forall e_i\in E_u,\\
 \displaystyle
 \alpha_{j,2}{u}_j(e_i)+\beta_{j,2}\frac{\partial {u}_j}{\partial x_j}(e_i)\geq 0, &\forall e_i\in E_d, \\
 \displaystyle
 {u}_{i_1}(e_i)=\cdots={u}_{i_m}(e_i),\ \ \sum_{j=i_1}^{i_m} d_{ij}A_jD_j\frac{\partial {u}_j}{\partial x_j}(e_i)\geq 0,\ & \forall e_i\in E_r,
 \end{cases}
 \end{equation}
and assume that $\frac{\partial {u}_j}{\partial x_j}(e_i)$ exists for $e_i\in E_b$ if $\beta_{j,s}\not=0$ for some $j\in I_{N-1},\,s\in\{1,2\}$.
Then $u(x)\geq 0$ for all $x\in G$. If $u(x)\not\equiv0$, then $u(x)>0$ for all $x\in G\backslash E_0$.
\end{Lemma}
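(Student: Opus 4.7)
The plan is to argue by contradiction, mimicking the parabolic comparison proof of Lemma \ref{compprinciple} at the stationary level and calling on Lemma \ref{maxiprinciple-elliptic} (strong maximum principle) and Lemma \ref{signderivative-elliptic} (network Hopf boundary lemma) as the only real inputs. For the nonnegativity assertion, suppose for contradiction that $m := \min_G u < 0$, attained at some $x_0 \in G$. If $x_0 \in G \setminus E_b$ (an edge interior or a ramification vertex), then $m \leq 0$ allows Lemma \ref{maxiprinciple-elliptic} to propagate the extremum, giving $u \equiv m$ on all of $G$. Substituting this constant into the edge inequality yields $c_j(x_j)\,m \geq 0$; together with $c_j \geq 0$ and $m < 0$, this forces $c_j \equiv 0$ for every $j$, contradicting the hypothesis $c_j \beta_{j,i} \not\equiv 0$ for some $(j,i)$. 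Hence the minimum is attained only on $E_b$, so $u > m$ on $G \setminus E_b$. At a minimizer $e_i \in E_b$ lying on edge $k_j$, Lemma \ref{signderivative-elliptic} yields $d_{ij} u_{x_j}(e_i) < 0$; if $e_i \in E_u$ then $d_{ij} = -1$, so $u_{x_j}(e_i) > 0$, making the upstream inequality $\alpha_{j,1} m - \beta_{j,1} u_{x_j}(e_i) \geq 0$ incompatible with $\alpha_{j,1} m \leq 0$, $\beta_{j,1} u_{x_j}(e_i) \geq 0$, and $\alpha_{j,1} + \beta_{j,1} > 0$; the downstream case $e_i \in E_d$ is analogous. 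This finishes the proof that $u \geq 0$.

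For the positivity assertion, assume $u \not\equiv 0$ and suppose $u(x_1) = 0$ at some $x_1 \in G \setminus E_0$. If $x_1 \in G \setminus E_b$, Lemma \ref{maxiprinciple-elliptic} applied with $m = 0$ forces $u \equiv 0$, contradicting $u \not\equiv 0$. Hence $x_1 \in E_b \setminus E_0$, which by the definition of $E_0$ (the hostile case being purely Dirichlet, i.e.\ $\beta = 0$) implies the relevant $\beta_{j,s} > 0$ at $x_1$, and moreover $u > 0$ on $G \setminus E_b$. Lemma \ref{signderivative-elliptic} then gives a definite sign for $u_{x_j}(e_i)$ at $x_1$ which, inserted into the boundary inequality at $x_1$ with $u(x_1) = 0$ and $\beta_{j,s} > 0$, forces the opposite sign, a contradiction.

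The main obstacle is not really technical but structural: one must ensure that the minimum-propagation step passes correctly through each ramification vertex $e_i \in E_r$, which is exactly what Lemma \ref{maxiprinciple-elliptic} has been set up to handle via the Kirchhoff-type inequality, and that the orientation convention $d_{ij} = \pm 1$ is tracked so that Hopf gives the correct one-sided sign at both upstream and downstream ends. The hypothesis $c_j \beta_{j,i} \not\equiv 0$ is what rules out the otherwise legitimate counterexample in which $c \equiv 0$ and every vertex carries a pure Neumann condition, where any negative constant would satisfy all of the listed inequalities.
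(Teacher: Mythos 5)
Your proof is correct and takes exactly the route the paper intends: the paper gives no written proof of Lemma \ref{positivesolell}, stating only that it ``immediately follows from Lemmas \ref{maxiprinciple-elliptic} and \ref{signderivative-elliptic},'' and your argument is precisely the fleshed-out version of that — interior/ramification minima are excluded by the elliptic strong maximum principle together with the hypothesis $c_j\beta_{j,i}\not\equiv0$, and boundary minima by the network Hopf lemma combined with the sign constraints $\alpha_{j,s},\beta_{j,s}\geq 0$, $\alpha_{j,s}+\beta_{j,s}>0$. You also correctly note that, unlike the parabolic comparison principle (Lemma \ref{compprinciple}), no exponential rescaling is needed here since $c\geq 0$ is already assumed.
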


In what follows, we will establish the existence, uniqueness, $L^p$ and Schauder estimates of solutions to the following linear elliptic problem:
 \begin{equation}\label{modelsslin}
\begin{cases}
\D -D_j\frac{\partial^2 u_j}{\partial x_j^2}+v_j\frac{\partial u_j}{\partial x_j}+c_j(x_j)u_j=g_j(x_j), \,& x_j\in (0,l_j),\ \ j\in I_{N-1}, \\
 \D\alpha_{j,1}u_j(e_i)-\beta_{j,1}\frac{\partial u_j}{\partial x_j}(e_i)=0, & \forall e_i\in E_u, \\
 \D \alpha_{j,2}u_j(e_i)+\beta_{j,2}\frac{\partial u_j}{\partial x_j}(e_i)=0, & \forall e_i\in E_d, \\
 u_{i_1}(e_i)=\cdots=u_{i_m}(e_i),\ \
 \D\sum_{j=i_1}^{i_m} d_{ij}A_jD_j\frac{\partial u_j}{\partial x_j}(e_i)= 0,\,\, & \forall e_i\in E_r.
 \end{cases}
 \end{equation}
Indeed, by using the similar idea to that of \cite{vonbelow1988}, we can write the boundary value problem on $G$
in (\ref{modelsslin}) into an equivalent boundary value problem for an elliptic system and then obtain the following result about the
existence and a priori estimates of solutions of (\ref{modelsslin}).

\begin{Theorem}\label{modelsslinsolex} The following assertions hold.
 \begin{enumerate}
 \item[{\rm(i)}] Assume that $c$ is bounded on $G$ with $c(x)\geq,\not\equiv 0$ and  $g\in L^p(G)$ ($p>1$),
 then (\ref{modelsslin}) admits a unique strong solution $u\in W_p^2(G)$ and
 $$
 \|u\|_{W^2_p(G)}\leq C\|g\|_{L^p(G)},
 $$
where the constant $C$ does not depend on $u,\,g$.

\item[{\rm(ii)}] Assume that $c\in C^{\alpha}(G)$ with $c(x)\geq,\not\equiv 0$ and $g\in C^{\alpha}(G)$, then (\ref{modelsslin})
admits a unique solution $u\in C^{2+\alpha}(G)$ and
 $$
 \|u\|_{C^{2+\alpha}(G)}\leq C\|g\|_{C^{\alpha}(G)},
 $$
where the constant $C$ does not depend on $u,\,g$.

 \end{enumerate}
\end{Theorem}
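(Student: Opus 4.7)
The plan is to establish uniqueness first using the strong maximum principle and the Hopf boundary lemma developed earlier, and then to obtain existence together with the stated estimates by reducing (\ref{modelsslin}) to an equivalent boundary value problem for a decoupled elliptic system on the disjoint union $\bigsqcup_{j=1}^{N-1}[0,l_j]$, following the strategy of \cite{vonbelow1988}.

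For uniqueness, let $w$ be the difference of two solutions of (\ref{modelsslin}); then $w$ solves the homogeneous problem. Suppose $w$ attains a positive maximum $M>0$ somewhere on $G$. If this maximum is attained at an interior point of an edge, Lemma \ref{maxiprinciple-elliptic} forces $w\equiv M$ on $G$, so the equation gives $c_j M=0$ everywhere; since $c\not\equiv 0$ this yields $M=0$, a contradiction. If the maximum is first attained at a boundary vertex $e_i\in E_b$, propagating the maximum principle back along the adjacent edge shows $w<M$ on its interior, so Lemma \ref{signderivative-elliptic} gives a strictly signed outward derivative that is incompatible with the Robin condition (\ref{bcupstreamzf}) or (\ref{bcdownstreamzf}) when $w(e_i)=M>0$. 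If the maximum is first attained at an interior vertex $e_i\in E_r$, the same propagation yields $w<M$ on the interior of every adjacent edge; Hopf then produces $d_{ij}w_{x_j}(e_i)>0$ for every incident edge $k_j$, and summing against the positive weights $A_jD_j$ contradicts the Kirchhoff condition (\ref{2-interconds}). Applying the same reasoning to $-w$ rules out a negative minimum and gives $w\equiv 0$.

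For existence, I would treat $u=(u_1,\dots,u_{N-1})$ as a vector-valued function with $u_j$ on $[0,l_j]$, so that (\ref{modelsslin}) becomes a diagonal second-order elliptic system on $\bigsqcup_j[0,l_j]$ supplemented by $2(N-1)$ linear endpoint conditions. At each boundary vertex one condition (\ref{bcupstreamzf}) or (\ref{bcdownstreamzf}) involves a single component, and at each interior vertex $e_i$ of valency $m$ the continuity condition (\ref{1-interconds}) and Kirchhoff condition (\ref{2-interconds}) yield $m$ linear relations on the $2m$ values $\{u_j(e_i),u_{x_j}(e_i)\}$ over the incident edges. Because $A_jD_j>0$ and the graph is a tree, these relations form a non-degenerate complementing boundary system for the diagonal equation. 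The classical $L^p$ theory for elliptic systems with complementing boundary conditions (as used in \cite{vonbelow1988} and based on \cite{LSU,So}) then yields that the associated operator from $W^2_p(G)$ to $L^p(G)$ (augmented by the boundary data) is Fredholm of index zero with the a priori estimate $\|u\|_{W^2_p(G)}\le C(\|g\|_{L^p(G)}+\|u\|_{L^p(G)})$; combined with the uniqueness already proved, a standard compactness argument absorbs the $\|u\|_{L^p}$ term and the Fredholm alternative gives a unique strong solution with the estimate claimed in (i).

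Part (ii) is handled analogously using the Schauder theory for the same reduced system, or equivalently by bootstrap: choosing $p$ large in (i) gives $u\in W^2_p(G)\hookrightarrow C^{1+\alpha}(G)$, so the lower-order terms $v_j u_j'+c_j u_j$ lie in $C^\alpha$ on each edge, and the classical one-dimensional Schauder estimate on $[0,l_j]$ (with boundary values at the endpoints now consistently given by the interface conditions) upgrades $u$ to $C^{2+\alpha}(G)$ with the stated bound. The main technical point is the verification that the continuity-plus-Kirchhoff interface conditions satisfy the Lopatinskii--Shapiro complementing condition for the decoupled system; this is where the tree structure and the strict positivity of $A_jD_j$ are essential, and is the only place where the network geometry plays an active role in the argument.
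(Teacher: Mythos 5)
Your proposal is correct and follows essentially the same route as the paper, which likewise proves Theorem \ref{modelsslinsolex} by rewriting (\ref{modelsslin}) as an equivalent boundary value problem for an elliptic system in the spirit of von Below and invoking the classical $L^p$ and Schauder theory, with uniqueness supplied by the elliptic comparison principle (Lemma \ref{positivesolell}); your maximum-principle uniqueness argument and Fredholm-alternative existence argument simply fill in the details the paper leaves implicit. One minor remark: the tree structure is not actually needed for the complementing condition at interior vertices (positivity of $A_jD_j$ together with continuity suffices); the absence of cycles is used elsewhere in the paper, namely for the consistent choice of the weights $\eta_j$ in the self-adjoint reformulation of the parabolic problem.
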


Next, we develop the theory of upper and lower solutions to establish the existence and uniqueness of solution
to the following nonlinear elliptic equations
\begin{equation}\label{modelss-a}
 \begin{cases}
 \D
 -D_j\frac{\partial^2 u_j}{\partial x_j^2}+v_j\frac{\partial u_j}{\partial x_j}=g_j(x_j,u_j), &x_j\in (0,l_j),\,j\in I_{N-1},  \\
 \displaystyle
 \alpha_{j,1}u_j(e_i)-\beta_{j,1}\frac{\partial u_j}{\partial x_j}(e_i)= 0,&  \forall e_i\in E_u,\\
 \displaystyle
 \alpha_{j,2}u_j(e_i)+\beta_{j,2}\frac{\partial u_j}{\partial x_j}(e_i)= 0, &\forall e_i\in E_d,
 \\   u_{i_1}(e_i)=\cdots=u_{i_m}(e_i),\,\,\,\D \sum_{j=i_1}^{i_m} d_{ij}A_jD_j\frac{\partial u_j}{\partial x_j}(e_i)= 0,& \forall e_i\in E_r.
 \end{cases}
\end{equation}

\begin{Definition}\label{defupplowsoelli}
\begin{enumerate} \item[] A function $\overline{u}\in C^2(G)$ is an upper solution of (\ref{modelss-a})
if $\overline{u}$ satisfies
 \begin{equation}\label{modelssupsol}
 \begin{cases}
 \D D_j\frac{\partial^2 {\overline{u}}_j}{\partial x_j^2}-v_j\frac{\partial {\overline{u}}_j}{\partial x_j}
 +g_j(x_j,{\overline{u}}_j)\leq 0,
  & x_j\in (0,l_j),\,j\in I_{N-1}, \\
 \displaystyle
\alpha_{j,1}\overline{u}_j(e_i)-\beta_{j,1}\frac{\partial \overline{u}_j}{\partial x_j}(e_i)\geq 0, & \forall e_i\in E_u,\\
 \displaystyle
 \alpha_{j,2}\overline{u}_j(e_i)+\beta_{j,2}\frac{\partial \overline{u}_j}{\partial x_j}(e_i)\geq 0,  &  \forall e_i\in E_d, \\
 \displaystyle
\overline{u}_{i_1}(e_i)=\cdots=\overline{u}_{i_m}(e_i),\,\,\,
\sum_{j=i_1}^{i_m} d_{ij}A_jD_j\frac{\partial
\overline{u}_j}{\partial x_j}(e_i)\geq 0,&\forall e_i\in E_r.
  \end{cases}
 \end{equation}

  \item[] A function $\underline{u}(x,t)\in C^2(G)$ is a lower solution of  (\ref{modelss-a})
if $\underline{u}$ satisfies
 \begin{equation}\label{modelsslowsol}
 \begin{cases}
 \D
  D_j\frac{\partial^2 {\underline{u}}_j}{\partial x_j^2}-v_j\frac{\partial {\underline{u}}_j}{\partial x_j}
 +g_j(x_j,{\underline{u}}_j)\geq  0, &  x_j\in (0,l_j),\,j\in I_{N-1},\\
 \displaystyle
 \alpha_{j,1}\underline{u}_j(e_i)-\beta_{j,1}\frac{\partial \underline{u}_j}{\partial x_j}(e_i)\leq 0, & \forall e_i\in E_u,\\
 \displaystyle
 \alpha_{j,2}\underline{u}_j(e_i)+\beta_{j,2}\frac{\partial \underline{u}_j}{\partial x_j}(e_i)\leq 0,&  \forall e_i\in E_d, \\
 \displaystyle
  \underline{u}_{i_1}(e_i)=\cdots=\underline{u}_{i_m}(e_i),\,\,\,\D \sum_{j=i_1}^{i_m} d_{ij}A_jD_j\frac{\partial \underline{u}_j}{\partial x_j}(e_i)\leq 0, &
 \forall e_i\in E_r.
 \end{cases}
 \end{equation}

\end{enumerate}
\end{Definition}

Based on Lemma \ref{positivesolell} and Theorem \ref{modelsslinsolex}, one can use the standard iteration of lower and upper solutions
(see, for instance \cite{Pao,RDbook}) to conclude that

\begin{Theorem}\label{ellipticeqnsolexis}
Let $\overline{u}$ and $\underline{u}$ be a pair of upper and lower solutions of (\ref{modelss-a}) satisfying
$\overline{u}\geq \underline{u}$ on $G$ and $m=\min_G\underline u<M=\max_G\overline u$, and
 $$
 |g_j(x_j,u_j)-g_j(y_j,v_j)|\leq K(|x_j-y_j|^{\alpha}+|u_j-v_j|),\, \forall (x_j,u_j),\,(y_j,v_j)\in G\times[m,M],\, j\in I_{N-1}
 $$
for some constants $K>0$ and $\alpha\in(0,1)$. Then (\ref{modelss-a}) admits
a solution $u\in C^{2+\alpha}(G)$ which satisfies $\underline{u}\leq u\leq
\overline{u}$. Moreover, (\ref{modelss-a}) admits a minimal $\tilde{w}$ and
a maximal solution $\tilde{u}$ in $[\underline{u},\overline{u}]$ in the sense that
for any solution $w$ of  (\ref{modelss-a}) satisfying $\underline{u}\leq w\leq \overline{u}$, we have $\tilde{w}\leq
w\leq \tilde{u}$.
\end{Theorem}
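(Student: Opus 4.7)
The plan is to establish the theorem via the classical monotone iteration (method of sub- and supersolutions), adapted to the network setting by exploiting the linear theory from Theorem \ref{modelsslinsolex} and the comparison principle Lemma \ref{positivesolell}. The Lipschitz hypothesis on $g$ is used to linearize the problem in a way that gives monotone sequences.

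First I would choose a constant $K^* > 0$ sufficiently large (for instance $K^* = K$) so that the map $u \mapsto g_j(x_j, u) + K^* u$ is nondecreasing in $u$ on $[m, M]$ for each $j \in I_{N-1}$. Then I would define two sequences $\{\overline{u}^{(n)}\}, \{\underline{u}^{(n)}\} \subset C^{2+\alpha}(G)$ by setting $\overline{u}^{(0)} = \overline{u}$, $\underline{u}^{(0)} = \underline{u}$, and letting $\overline{u}^{(n+1)}$ solve the linear network problem
\begin{equation*}
\begin{cases}
-D_j\partial^2_{x_j}\overline{u}^{(n+1)}_j + v_j\partial_{x_j}\overline{u}^{(n+1)}_j + K^*\overline{u}^{(n+1)}_j = g_j(x_j,\overline{u}^{(n)}_j) + K^*\overline{u}^{(n)}_j, \\
\text{boundary conditions (\ref{bcupstreamzf}), (\ref{bcdownstreamzf}) and interface conditions (\ref{interconds})},
\end{cases}
\end{equation*}
and analogously for $\underline{u}^{(n+1)}$ with $\overline{u}^{(n)}$ replaced by $\underline{u}^{(n)}$. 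Since the right-hand sides lie in $C^\alpha(G)$ (by the Hölder hypothesis on $g$ and an inductive regularity argument), Theorem \ref{modelsslinsolex}(ii) guarantees that each iterate is well-defined in $C^{2+\alpha}(G)$, with a priori bounds depending only on the $C^\alpha$-norm of the source.

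Next I would verify the ordering relations
\begin{equation*}
\underline{u} = \underline{u}^{(0)} \leq \underline{u}^{(1)} \leq \cdots \leq \underline{u}^{(n)} \leq \overline{u}^{(n)} \leq \cdots \leq \overline{u}^{(1)} \leq \overline{u}^{(0)} = \overline{u}
\end{equation*}
by induction. The base step $\underline{u}^{(1)} \leq \overline{u}^{(0)}$ and $\overline{u}^{(1)} \leq \overline{u}^{(0)}$ follows by applying Lemma \ref{positivesolell} to the difference and using the super/subsolution defining inequalities in Definition \ref{defupplowsoelli}; the inductive step uses the monotonicity of $u \mapsto g_j(x_j,u) + K^* u$ together with Lemma \ref{positivesolell}. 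Particular care is needed at the ramification vertices, where the Kirchhoff-type flux inequalities from \eqref{modelssupsol} and \eqref{modelsslowsol} must be matched against the equalities satisfied by the iterates so that the difference satisfies the hypotheses of the comparison principle.

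Having the monotone bounded sequences, I would pass to the limit. By the Schauder estimate in Theorem \ref{modelsslinsolex}(ii) applied to the linear problem for $\overline{u}^{(n+1)}$, the $C^{2+\alpha}(G)$-norms are uniformly bounded, hence by compact embedding $C^{2+\alpha}(G) \hookrightarrow C^2(G)$ a subsequence converges in $C^2(G)$; monotonicity upgrades this to convergence of the full sequence. The limits $\tilde{u} = \lim \overline{u}^{(n)}$ and $\tilde{w} = \lim \underline{u}^{(n)}$ lie in $C^{2+\alpha}(G)$ and solve \eqref{modelss-a}. Finally, to prove maximality of $\tilde{u}$ (and minimality of $\tilde{w}$), given any solution $w \in [\underline{u},\overline{u}]$, an easy induction using the same monotonicity argument shows $w \leq \overline{u}^{(n)}$ and $w \geq \underline{u}^{(n)}$ for all $n$, whence $\tilde{w} \leq w \leq \tilde{u}$ after passing to the limit.

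The main obstacle I anticipate is the careful verification that the comparison principle Lemma \ref{positivesolell} applies to differences of iterates at each step, specifically ensuring the interface conditions at vertices in $E_r$ combine correctly with the boundary conditions so that the difference $\overline{u}^{(n)} - \overline{u}^{(n+1)}$ (etc.) satisfies the full hypothesis list. A secondary technical point is confirming that the shifted operator (with the extra $+K^* u$ term) satisfies the nontriviality condition needed for Lemma \ref{positivesolell}; since $K^* > 0$ makes the zeroth-order coefficient strictly positive everywhere, this should pose no difficulty.
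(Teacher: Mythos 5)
Your proposal is correct and follows exactly the route the paper takes: the paper proves this theorem by invoking the standard monotone iteration of upper and lower solutions based on the linear solvability and Schauder estimates of Theorem \ref{modelsslinsolex} and the comparison principle of Lemma \ref{positivesolell} (citing the classical references for the details), which is precisely the shifted-operator iteration scheme you spell out. Your additional attention to the Kirchhoff flux inequalities at ramification vertices and to the positivity of the shifted zeroth-order coefficient are exactly the points the paper leaves implicit.
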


\section{Proof of Proposition \ref{eigenvalueexth}}\label{proofpropeigenvalueexth}

Choose $\xi>0 $  large enough so that $f_j(\cdot,0)-\xi<0$ for all $j\in I_{N-1}$.
For any $g\in X$, Theorem \ref{modelsslinsolex} guarantees that the problem
 \begin{equation}\label{modelsslintt}
 \begin{cases}
 \D
 -D_j\frac{\partial^2u_j }{\partial x_j^2}+v_j\frac{\partial u_j}{\partial x_j}+[\xi-f_j(\cdot,0)]u_j=g_j(x_j), & x_j\in (0,l_j),\,j\in I_{N-1},\\
 \displaystyle
\alpha_{j,1}u_j(e_i)-\beta_{j,1}\frac{\partial u_j}{\partial x_j}(e_i)= 0,  & \forall e_i\in E_u,\\
 \displaystyle
  \alpha_{j,2}u_j(e_i)+\beta_{j,2}\frac{\partial u_j}{\partial x_j}(e_i)= 0,&  \forall e_i\in E_d,\\
 \displaystyle
 u_{i_1}(e_i)=\cdots=u_{i_m}(e_i),\,\,
 \sum_{j=i_1}^{i_m} d_{ij}A_jD_j\frac{\partial u_j}{\partial x_j}(e_i)= 0, &\forall e_i\in
 E_r
  \end{cases}
 \end{equation}
has a unique solution $u$ satisfying
 $$
 \|u\|_{C^{2+\alpha}(G)}\leq C\|g\|_{C^\alpha(G)}\leq C_1\|g\|_{C^1(G)}
 $$
for some constants $C>0$ independent of $u$ and $g$.

Define the operator
 \begin{equation}\label{Tdefinition}
 T: X\rightarrow X,\ \ \,\,u=Tg.
 \end{equation}
Then $T$ is a linear and continuous operator that maps a bounded set in $X$ into a bounded set in
$C^{2+\alpha}(G)$. Note that a bounded set in $C^{2+\alpha}(G)$ is a sequentially compact set in $X$. This implies that $T$ maps
a bounded set in $X$ into a sequentially compact set in $X$.  Hence, $T$ is a compact operator on
$X$. Moreover, by Lemmas \ref{signderivative} and \ref{compprinciple}, $Tg\geq 0$ if $g\in X_+$, and $u=Tg\in X^o$.
Therefore, $T$ is strongly positive. Let $r(T)$ be the spectral radius of $T$. It follows from the well-known Krein-Rutman Theorem
(see, for example, \cite[Theorem 1.2]{Dubook}) that $r(T)>0$ is a simple eigenvalue of $T$ with an
eigenfunction $g^\ast\in X^0$, i.e., $Tg^\ast=r(T) g^\ast$, and there is no other eigenvalue of $T$ associated
with positive eigenfunctions. Thus, $\psi^\ast=Tg^\ast$ satisfies
$-\mathcal{L}\psi^\ast+\xi \psi^\ast=(1/r(T))\psi^\ast$ in $G$, and hence,
$\lambda^\ast=\xi-1/r(T)$ is a simple eigenvalue of (\ref{modellineigen10})
with positive eigenfunction $\psi^\ast\in X^o$ and no other eigenvalues of
(\ref{modellineigen10}) correspond to positive eigenfunctions.
Similarly as in the proof of \cite[Theorem 1.4]{Dubook}, we can obtain that if $\lambda\neq \lambda^\ast$
is an eigenvalue of (\ref{modellineigen10}), then $Re(\lambda)\leq \lambda^\ast.$

\section{Proof of Theorem \ref{persistenceresult}}\label{proofThpersistenceresult}

We first prove (i). Assume that $\lambda^\ast<0$. Let $\phi\in X_+$. Since $\psi^\ast\in X^o$, clearly
there exists $\sigma>0$ such that $0\leq \phi\leq \sigma \psi^\ast$
on $G$. Let $u(\cdot, t,\phi)$ be the solution of {\bf (IBVP)} with
initial condition $\phi$ and $\overline{u}(\cdot,t,\sigma
\psi^\ast)=e^{\lambda^\ast t} \sigma \psi^\ast $ be the solution of
(\ref{modellin}) with initial condition $\sigma \psi^\ast $. By
{\bf [H1]}, we have
 $$
 \frac{\partial \overline{u}_j}{\partial
 t}=D_j\frac{\partial^2 \overline{u}_j}{\partial x_j^2}-v_j\frac{\partial
 \overline{u}_j}{\partial x_j}+f_j(x_j,0)\overline{u}_j\geq D_j\frac{\partial^2
 \overline{u}_j}{\partial x_j^2}-v_j\frac{\partial \overline{u}_j}{\partial
 x_j}+f_j(x_j,\overline{u}_j)\overline{u}_j
 $$
for any $x_j\in (0,l_j)$ and $t>0$. Then
 $$
 \frac{\partial u_j}{\partial t}-[D_j\frac{\partial^2
 u_j}{\partial x_j^2}-v_j\frac{\partial u_j}{\partial
 x_j}+f_j(x_j,u_j)u_j]=0\leq \frac{\partial \overline{u}_j}{\partial
 t}-[D_j\frac{\partial^2 \overline{u}_j}{\partial x_j^2}-v_j\frac{\partial
 \overline{u}_j}{\partial x_j}+f_j(x_j,\overline{u}_j)\overline{u}_j]
 $$
on $(0,l_j)$ for any $t>0$. It follows from Lemma \ref{compprinciple} and the fact
$0\leq \phi\leq \sigma \psi^\ast$ that
 $$
 0\leq u(\cdot,t,\phi)\leq\overline{u}(\cdot,t,\sigma \psi^\ast)
 $$
for any $t\geq 0$. Therefore,
 $$
 0\leq  \lim\limits_{t\rightarrow\infty}u(\cdot,
 t,\phi)\leq
 \lim\limits_{t\rightarrow\infty}\overline{u}(\cdot,t,\sigma
 \psi^\ast)\to0\ \ \mbox{uniformly on} \ G,
 $$
which implies that $u\equiv 0$ is globally attractive for all initial conditions in $X_+$.

Let $\lambda^\ast=0$.
For any  $\phi\in X^o$, there exists some $\sigma_0>0$
such that   $0\leq \phi\leq \overline{u}:=\sigma_0\psi^\ast$ and that $\overline{u}$ is an  upper solution of (\ref{modelss}).
Let $u^{(1)}$ and $u^{(2)}$ be solutions of {\bf (IBVP)} with initial conditions
$\phi$ and $\overline{u}$, respectively. It follows from Theorem
\ref{compprinciple} that $0\leq u^{(1)}(x,t)\leq
u^{(2)}(x,t)\leq \overline{u}(x)$. Note that  $u^{(2)}$ is bounded and monotonically decreasing in $t$
by following the same proof of \cite[Lemma 3.2.4]{RDbook}). Therefore, the similar proof of \cite[Lemma
3.2.5]{RDbook}) shows that $\lim\limits_{t\rightarrow \infty}u^{(2)}(\cdot,t)=V\geq 0$ and $V$ is  a classical solution of  (\ref{modelss}).
If $V(x)>0$ at some $x\in G$, clearly $V\in X^o$. Then, by Proposition \ref{equivalent-property},
we can easily obtain $\lambda^\ast>0$, which gives rise to a contradiction. Hence, $V(x)\equiv 0$ on $G$.
Therefore, $\lim\limits_{t\rightarrow \infty}u^{(1)}(x,t)=0$. So we conclude that when $\lambda^\ast=0$, $u\equiv 0$ is \
globally attractive for {\bf (IBVP)} with respect to all initial conditions in $X_+$. Thus, (i) is verified.

We next prove (ii).  Assume that $\lambda^\ast>0$. Let $\psi^\ast$ be the eigenfunction associated
with $\lambda^\ast$. For sufficiently small $\epsilon>0$, we have $f_j(x_j,\epsilon )
\geq f_j(x_j,0)-\lambda^\ast$ for all $x_j\in(0,l_j)$, $j\in\{1,\cdots, N-1\}$. This implies that  $w_1=\epsilon \psi^\ast$ is a  lower
solution of  (\ref{modelss}). Note that for any constant $K^\ast>\max\{M_1,\cdots,M_{N-1}\}$, $w_2(x)=K^\ast$ is an
upper solution of (\ref{modelss}). Let $\epsilon>0$ be sufficiently small such that
$w_2(x)\geq w_1(x)=\epsilon \psi^\ast(x)$ and $K^\ast> \min_{x\in G}\{\epsilon \psi^\ast(x)\}$.
It follows from Theorem \ref{ellipticeqnsolexis} that (\ref{modelss}) admits a positive solution $u^\ast\in X^o$.

Assume there are two distinct positive steady states of {\bf (IBVP)}: $u_1^\ast$ and $u_2^\ast$. Since $K^*$ can be arbitrarily large
and $\epsilon$ can be arbitrarily small, without loss of generality, assume that $u_1^\ast$ is the maximal solution of  (\ref{modelss})
and $u_2^\ast$ is the minimal solution of  (\ref{modelss})
in $[\epsilon \psi^\ast, K^\ast]$. Then $u_2^\ast\leq u_1^\ast$ on $G$. It suffices to show $u_2^\ast=u_1^\ast$. Suppose that
$u_2^\ast\leq,\not= u_1^\ast$ on $G$. Recall that $u_1^\ast,\,u_2^\ast\in X^o$. By defining
$\tau^0=\sup\{\tau>0:\ u_2^\ast\geq\tau u_1^\ast\ \mbox{on}\ G\}$, we then have $\tau^0\in(0,1)$ and $u_2^\ast\geq\tau^0 u_1^\ast$ on $G$.  Due to {\bf [H3]}, we further observe that $u_2^\ast\geq,\not\equiv\tau^0 u_1^\ast$.
Thus, by Lemmas \ref{signderivative}, \ref{Qtmonotone} and \ref{Qtstrsubhomo}, we get
 $$
 u_2^\ast=Q_t(u_2^\ast)\gg Q_t(\tau^0 u_1^\ast)\geq \tau^0 Q_t(u_1^\ast)=\tau^0 u_1^\ast, \,\forall t>0,
 $$
where $Q_t$ is the solution map of {\bf (IBVP)} defined as $Q_t(\psi)=u(x,t,\psi)$ for the solution
$u(x,t,\psi)$  of {\bf (IBVP)} with initial condition $\psi$.
This implies that $u_2^\ast(x)-\tau^0u_1^\ast(x)\in X^o$, which in turn implies that
$u_2^\ast(x)-\tau^0u_1^\ast(x)\geq \tau_0u_1^\ast(x)$ on $G$ for some small $\tau_0>0$.
This is a contradiction to the definition of $\tau^0$. Therefore,
there is only a unique positive steady state $u^\ast$ of {\bf (IBVP)}.

For any $u^0\in X_+\setminus \{0\}$, obviously the unique solution $u$ of {\bf (IBVP)} satisfies that
$u(\cdot,t)\in X^o$ for any $t>0$. Thus, we can assume that $u^o\in X^o$. So there exist some $\epsilon_0>0$ and $\sigma_0\geq 1$
such that $\underline{u}=\epsilon_0 \psi^\ast$ and $\overline{u}=\sigma_0 K^\ast$
are lower and upper solutions of (\ref{modelss}), respectively, and
 $$
 \underline{u}=\epsilon_0 \psi^\ast\leq u^o\leq\sigma_0 K^\ast=\overline{u}\ \  \mbox{on } G.
 $$
Let $u_1,\, u_2$ be solutions of {\bf (IBVP)} with initial conditions
$\underline{u}$ and $\overline{u}$, respectively. It follows from Lemma
\ref{upper-lower-property} that $\underline{u}(x)\leq u_1(x,t)\leq
u(x,t)\leq u_2(x,t)\leq \overline{u}(x)$. As before, it can be easily proved that $u_1$
and $u_2$ are monotonically increasing and decreasing in t,
respectively (\cite[Lemma 3.2.4]{RDbook}). Therefore, $u_1 $
and $u_2$ are bounded and monotonic with respect to $t$. Additionally, we can claim
$\lim\limits_{t\rightarrow \infty}u_1(\cdot,t)=U$ and
$\lim\limits_{t\rightarrow \infty}u_2(\cdot,t)=V$. Furthermore, we
can prove $U$ and $V$ are solutions of  (\ref{modelss}) (see Lemma
3.2.5 in \cite{RDbook}). Then $\underline{u}(x)\leq U(x)\leq
V(x)\leq \overline{u}(x)$. Hence, $U(x)=V(x)=u^\ast$, and
$\lim\limits_{t\rightarrow \infty}u(x,t)=u^\ast(x)$. Therefore, we
have proved that $u^\ast$ is globally attractive with respect to any
positive initial values in $X_+\setminus \{0\}$.

\section{The hydrological relation in a gradually varying flow} \label{appflow}

 Recall that the governing equation for the gradually varied flow is given by \begin{equation} \label{eq:dydx0}
\frac{d y}{dx} = \frac{S_0(x)-S_f(y)}{1-F_r^2(y)}
\end{equation}
(see (5-7) in \cite{Chaudhrybook}), where $x$ (unit:~m) represents the longitudinal location along the river,  $y(x)$ (unit:~m) is the water depth at location~$x$, $S_0(x)$ is the slope of the channel bed at location~$x$, $S_f$ is the friction slope, i.e., the slope of the energy grade line, $F_r$ is the Froude number that is defined as the ratio between the flow velocity and the water wave propagation velocity.
In the case where the river has a rectangular cross section with a constant width~$B$ (unit:~m) and a constant bed slope~$S_{0}$, the water depth $y(x)$ is stabilized at the normal depth
\begin{equation}\label{normaldepthq}
y_n=\left(\frac{Q^2 n^2}{B^2 S_{0} k^2}\right)^{\frac{3}{10}},
\end{equation}
where $Q$ (unit: m$^3$/s) is the flow discharge,  $k=1$ is a dimensionless conversion factor, and $n$ (unit: s/m$^{1/3}$) is Manning's roughness coefficient, which represents the resistance to water flows in channels and depends on factors such as the bed roughness and sinuosity. The flow in such a river is called a uniform flow. See more details in \cite{Chaudhrybook} or Appendices C and D in \cite{JinLewisBMB}.

\end{appendices}

\bibliographystyle{plain}
\bibliography{network-20180420submission-rr.bbl}

\begin{thebibliography}{10}

\bibitem{AndersonNisbet2006}
K.~E. Anderson, R.~M. Nisbet, and S.~Diehl.
\newblock Spatial scaling of consumer-resource interactions in advection
  dominated systems.
\newblock {\em American Naturalist}, 168:358--372, 2006.

\bibitem{AndersonNisbet2005}
K.~E. Anderson, R.~M. Nisbet, S.~Diehl, and S.~D. Cooper.
\newblock Scaling population responses to spatial environmental variability in
  advection-dominated systems.
\newblock {\em Ecology Letters}, 8:933--943, 2005.

\bibitem{Anderson2006}
K.~E. Anderson, A.~J. Paul, E.~Mccauley, L.~J. Jackson, J.~R. Post, and R.~M.
  Nisbet.
\newblock Instream flow needs in streams and rivers: the importance of
  understanding ecological dynamics.
\newblock {\em Frontiers in Ecology and the Environment}, 4:309--318, 2006.

\bibitem{WArendt}
W.~Arendt, D.~Dier, and M.~K. Fijav\u{z}.
\newblock Diffusion in networks with time-dependent transmission conditions.
\newblock {\em Applied Mathematics \& Optimization}, 69:315--336, 2014.

\bibitem{BK2013}
G.~Berkolaiko and P.~Kuchment.
\newblock {\em Introduction to quantum graphs}, volume 186 of {\em Mathematical
  Surveys and Monographs}.
\newblock American Mathematical Society, Providence, RI, 2013.

\bibitem{Bertuzzo2010}
E.~Bertuzzo, R.~Casagrandi, M.~Gatto, I.~Rodriguez-Iturbe, and A.~Rinaldo.
\newblock On spatially explicit models of cholera epidemics.
\newblock {\em Journal of The Royal Society Interface}, 7:321--333, 2010.

\bibitem{Chaudhrybook}
M.~H. Chaudhry.
\newblock {\em Open-channel flow}.
\newblock Englewood Cliffs: Prentice-Hall, 1993.

\bibitem{Cuddington2002}
K.~Cuddington and P.~Yodzis.
\newblock Predator-prey dynamics and movement in fractal environments.
\newblock {\em American Naturalist}, 160:119--134, 2002.

\bibitem{DeAngelis1995}
D.~L. Deangelis, M.~Loreaub, D.~Neergaardc, P.~J. Mulhollanda, and E.~R.
  Marzolfa.
\newblock Modelling nutrient-periphyton dynamics in streams: the importance of
  transient storage zones.
\newblock {\em Ecological Modelling}, 80:149--160, 1995.

\bibitem{Dubook}
Y.~Du.
\newblock {\em Order Structure and Topological Methods in Nonlinear Partial
  Differential Equations, Vol 1: Maximum Principles and Applications}.
\newblock World Scientific Publishing Co. Pte. Ltd., Singapore, 2006.

\bibitem{Fagan2002}
W.~F. Fagan.
\newblock Connectivity, fragmentation, and extinction risk in dendritic
  metapopulations.
\newblock {\em Ecology}, 83(12):3243--3249, 2002.

\bibitem{Fausch2002}
K.~D. Fausch, C.~E. Torgersen, C.~V. Baxter, and H.~W. Li.
\newblock Landscapes to riverscapes: Bridging the gap between research and
  conservation of stream fishes.
\newblock {\em BioScience}, 52:483--498, 2002.

\bibitem{Fijavz}
M.~K. Fijav\u{z}, D.~Mugnolo, and E.~Sikolya.
\newblock Variational and semigroup methods for waves and diffusion in
  networks.
\newblock {\em Applied Mathematics \& Optimization}, 55:219--240, 2007.

\bibitem{Goldberg2010}
E.~E. Goldberg, H.~J. Lynch, M.~G. Neubert, and W.~F. Fagan.
\newblock Effects of branching spatial structure and life history on the
  asymptotic growth rate of a population.
\newblock {\em Theoretical Ecology}, 3:137--152, 2010.

\bibitem{Grant2007}
E.~H.~C. Grant, W.~H. Lowe, and W.~F. Fagan.
\newblock Living in the branches: Population dynamics and ecological processes
  in dendritic networks.
\newblock {\em Ecology Letters}, 10:165--175, 2007.

\bibitem{Grant2010}
E.~H.~C. Grant, J.~D. Nichols, W.~H. Lowe, and W.~F. Fagan.
\newblock Use of multiple dispersal pathways facilitates amphibian persistence
  in stream networks.
\newblock {\em Proceedings of the National Academy of Sciences of the United
  States of America}, 107:6936--6940, 2010.

\bibitem{Hilkerlewis2010}
F.~M. Hilker and M.~A. Lewis.
\newblock Predator-prey systems in streams and rivers.
\newblock {\em Theoretical Ecolology}, 3:175--193, 2010.

\bibitem{Huangetal2016}
Q.-H. Huang, Y.~Jin, and M.~A. Lewis.
\newblock {$R_0$} analysis of a {B}enthic-drift model for a stream population.
\newblock {\em SIAM Journal on Applied Dynamical Systems}, 15(1):287--321,
  2016.

\bibitem{JJL}
J.~Jacobsen, Y.~Jin, and M.~A. Lewis.
\newblock Integrodifference models for persistence in temporally varying river
  environments.
\newblock {\em Journal of Mathematical Biology}, 70(3):549--590, 2015.

\bibitem{Jin2011}
Y.~Jin and M.~A. Lewis.
\newblock Seasonal influences on population spread and persistence in streams:
  critical domain size.
\newblock {\em SIAM Journal on Applied Mathematics}, 71(4):1241--1262, 2011.

\bibitem{Jinlewisjmb}
Y.~Jin and M.~A. Lewis.
\newblock Seasonal influences on population spread and persistence in streams:
  spreading speeds.
\newblock {\em Journal of Mathematical Biology}, 65(3):403--439, 2012.

\bibitem{JinLewisBMB}
Y.~Jin and M.~A. Lewis.
\newblock Seasonal invasion dynamics in a spatially heterogeneous river with
  fluctuating flows.
\newblock {\em Bulletin of Mathematical Biology}, 76(7):1522--1565, 2014.

\bibitem{JinLutscher}
Y.~Jin, F.~Lutscher, and Y.~Pei.
\newblock Meandering rivers: how important is lateral variability for species
  persistence?
\newblock {\em Bulletin of Mathematical Biology}, 79(12):2954--2985, 2017.

\bibitem{Jinwangfb}
Y.~Jin and F.-B. Wang.
\newblock Dynamics of a benthic-drift model for two competitive species.
\newblock {\em Journal of Mathematical Analysis and Applications},
  462(1):840--860, 2018.

\bibitem{k2005}
P.~Kuchment.
\newblock Quantum graphs. {I}. {S}ome basic structures.
\newblock {\em Waves in Random Media}, 14(1):107--128, 2004.

\bibitem{LSU}
O.A. Ladyzenskaja, V.A. Solonnikov, and N.N. Uralceva.
\newblock {\em Linear and Quasi-Linear Equations of Parabolic Type}.
\newblock AMS, Providence, RI, 1968.

\bibitem{LLL2015JBD}
K.~Y. Lam, Y.~Lou, and F.~Lutscher.
\newblock Evolution of dispersal in closed advective environments.
\newblock {\em Journal of Biological Dynamics}, 9(suppl. 1):188--212, 2015.

\bibitem{LLL2016SIAM}
K.~Y. Lam, Y.~Lou, and F.~Lutscher.
\newblock The emergence of range limits in advective environments.
\newblock {\em SIAM Journal on Applied Mathematics}, 76(2):641--662, 2016.

\bibitem{LL2014JMB}
Y.~Lou and F.~Lutscher.
\newblock Evolution of dispersal in open advective environments.
\newblock {\em Journal of Mathematical Biology}, 69(6-7):1319--1342, 2014.

\bibitem{Lumer1979}
G.~Lumer.
\newblock Connecting of local operators and evolution equations on networks.
\newblock In {\em Potential theory, {C}openhagen 1979 ({P}roc. {C}olloq.,
  {C}openhagen, 1979)}, volume 787 of {\em Lecture Notes in Math.}, pages
  219--234. Springer, Berlin, 1980.

\bibitem{Lumer1980}
G.~Lumer.
\newblock Equations de diffusion sur des r\'{e}seaux infinis.
\newblock {\em S\'{e}m. Goulauic-Schwartz}, 18:1--9, 1980.

\bibitem{lutscher2008}
F.~Lutscher.
\newblock Density-dependent dispersal in integrodifference equations.
\newblock {\em Journal of Mathematical Biology}, 56:499--524, 2008.

\bibitem{Lutscher2010}
F.~Lutscher.
\newblock Non-local dispersal and averaging in heterogeneous landscapes.
\newblock {\em Applicable Analysis}, 89(7):1091--1108, 2010.

\bibitem{lutscherlewis2006}
F.~Lutscher, M.~A. Lewis, and E.~McCauley.
\newblock Effects of heterogeneity on spread and persistence in rivers.
\newblock {\em Bulletin of Mathematical Biology}, 68:2129--2160, 2006.

\bibitem{Lutscher2007}
F.~Lutscher, E.~McCauley, and M.~A. Lewis.
\newblock Spatial patterns and coexistence mechanisms in systems with
  unidirectional flow.
\newblock {\em Theoretical Population Biology}, 71(3):267--277, 2007.

\bibitem{Lutscher2009}
F.~Lutscher, R.~M. Nisbet, and E.~Pachepsky.
\newblock Population persistence in the face of advection.
\newblock {\em Theo. Ecol.}, 3(4):271--284, Nov 2010.

\bibitem{Lutcherlewis2005}
F.~Lutscher, E.~Pachepsky, and M.~A. Lewis.
\newblock The effect of dispersal patterns on stream populations.
\newblock {\em SIAM Review}, 47(4):749--772, 2005.

\bibitem{Lutscherseo}
F.~Lutscher and G.~Seo.
\newblock The effect of temporal variability on persistence conditions in
  rivers.
\newblock {\em Journal of Theoretical Biology}, 283:53--59, 2011.

\bibitem{Mari2014}
L.~Mari, R.~Casagrandi, E.~Bertuzzo, A.~Rinaldo, and M.~Gatto.
\newblock Metapopulation persistence and species spread in river networks.
\newblock {\em Ecology Letters}, 17:426--434, 2014.

\bibitem{McKenzieetal2012}
H.~W. Mckenzie, Y.~Jin, J.~Jacobsen, and M.~A. Lewis.
\newblock {$R_0$} analysis of a spatiotemporal model for a stream population.
\newblock {\em SIAM Journal on Applied Dynamical Systems}, 11(2):567--596,
  2012.

\bibitem{DMugnolo}
D.~Mugnolo.
\newblock Gaussian estimates for a heat equation on a network.
\newblock {\em Networks \& Heterogeneous Media}, 2:55--79, 2012.

\bibitem{Muller1954}
K.~M\"{u}ller.
\newblock Investigations on the organic drift in north swedish streams.
\newblock {\em Report: Institute of Fresh-water Research, Drottningholm},
  35:133--148, 1954.

\bibitem{Muller1982}
K.~M\"{u}ller.
\newblock The colonization cycle of freshwater insects.
\newblock {\em Oecologia}, 52:202--207, 1982.

\bibitem{PLNL}
E.~Pachepsky, F.~Lutscher, R.~M. Nisbet, and M.~A. Lewis.
\newblock Persistence, spread and the drift paradox.
\newblock {\em Theoretical Population Biology}, 67(1):61--73, 2005.

\bibitem{Pao}
C.~V. Pao.
\newblock {\em Nonlinear parabolic and elliptic equations}.
\newblock Plenum Press, New York, 1992.

\bibitem{Peterson2013}
E.~E. Peterson, J.~M. Ver~Hoef, D.~J. Isaak, J.~A. Falke, M.~J. Fortin, C.~E.
  Jordan, K.~McNyset, P.~Monestiez, A.~S. Ruesch, A.~Sengupta, N.~Som, E.~A.
  Steel, D.~M. Theobald, C.~E. Torgersen, and S.~J. Wenger.
\newblock Modelling dendritic ecological networks in space: an integrated
  network perspective.
\newblock {\em Ecology Letters}, 16:707--719, 2013.

\bibitem{Protterbook1967}
M.~H. Protter and H.~F. Weinberger.
\newblock {\em Maximum Principles in Differential Equations}.
\newblock Prentice Hall, Englewood Cliffs, 1967.

\bibitem{Ramirez2012}
J.~M. Ramirez.
\newblock Population persistence under advection-diffusion in river networks.
\newblock {\em Journal of Mathematical Biology}, 65(5):919--942, 2012.

\bibitem{Sarhad2015}
J.~J. Sarhad and K.~E. Anderson.
\newblock Modeling population persistence in continuous aquatic networks using
  metric graphs.
\newblock {\em Fundamental and Applied Limnology}, 186:135--152, 2015.

\bibitem{Sarhad2014}
J.~J. Sarhad, R.~Carlson, and K.~E. Anderson.
\newblock Population persistence in river networks.
\newblock {\em Journal of Mathematical Biology}, 69(2):401--448, 2014.

\bibitem{SeoLutscher2011}
G.~Seo and F.~Lutscher.
\newblock Spread rates under temporal variability: calculation and application
  to biological invasions.
\newblock {\em Mathematical Models and Methods in Applied Sciences},
  21(12):2469--2489, 2011.

\bibitem{So}
V.A. Solonnikov.
\newblock {\em On boundary value problems for linear parabolic systems of
  differential equations of general form, Proceedings of the Steklov Institute
  of Mathematics, 83(1965),}.
\newblock American Mathematical Society Providence, Rhode Island, 1967.

\bibitem{Speirs2001}
D.~C. Speirs and W.~S.~C. Gurney.
\newblock Population persistence in rivers and estuaries.
\newblock {\em Ecology}, 82(5):1219--1237, 2001.

\bibitem{Thieme2010}
H.~Thieme.
\newblock Spectral bound and reproductive number for infinite-dimensional
  population structure and time heterogeneity.
\newblock {\em SIAM Journal on Applied Mathematics}, 70:188--211, 2009.

\bibitem{VLutscher2010}
O.~Vasilyeva and F.~Lutscher.
\newblock Population dynamics in rivers: analysis of steady states.
\newblock {\em Canadian Applied Mathematics Quarterly}, 18(4):439--469, 2010.

\bibitem{Vasilyeva20122}
O.~Vasilyeva and F.~Lutscher.
\newblock Competition of three species in an advective environment.
\newblock {\em Nonlinear Analysis. Real World Applications}, 13(4):1730--1748,
  2012.

\bibitem{vonbelow1988}
J.~von Below.
\newblock Classical solvability of linear parabolic equations on networks.
\newblock {\em Journal of Differential Equations}, 72(2):316--337, 1988.

\bibitem{vonbeloweigen}
J.~von Below.
\newblock Sturm-{L}iouville eigenvalue problems on networks.
\newblock {\em Mathematical Methods in the Applied Sciences}, 10(4):383--395,
  1988.

\bibitem{vonbelow1991}
J.~von Below.
\newblock A maximum principle for semilinear parabolic network equations.
\newblock In {\em Differential equations with applications in biology, physics,
  and engineering ({L}eibnitz, 1989)}, volume 133 of {\em Lecture Notes in Pure
  and Applied Mathematics}, pages 37--45. Dekker, New York, 1991.

\bibitem{vonbelow1994}
J.~von Below.
\newblock Nonlinear and dynamical node transition in network diffusion
  problems.
\newblock In {\em Evolution equations, control theory, and biomathematics
  ({H}an sur {L}esse, 1991)}, volume 155 of {\em Lecture Notes in Pure and
  Applied Mathematics}, pages 1--10. Dekker, New York, 1994.

\bibitem{vonbelow2012}
J.~von Below and J.~A. Lubary.
\newblock Eigenvalue asymptotics for second-order elliptic operators on
  networks.
\newblock {\em Asymptotic Analysis}, 77(3-4):147--167, 2012.

\bibitem{vonbelow2015ss}
J.~von Below and J.~A. Lubary.
\newblock Instability of stationary solutions of reaction-diffusion-equations
  on graphs.
\newblock {\em Results in Mathematics}, 68(1-2):171--201, 2015.

\bibitem{vonbelow1996par}
J.~von Below and S.~Nicaise.
\newblock Eigenvalue asymptotics for second-order elliptic operators on
  networks.
\newblock {\em Communications in Partial Differential Equations},
  21(1-2):255--279, 1996.

\bibitem{Wang2012}
W.-D. Wang and X.-Q. Zhao.
\newblock Basic reproduction numbers for reaction-diffusion epidemic models.
\newblock {\em SIAM Journal on Applied Dynamical Systems}, 11(4):1652--1673,
  2012.

\bibitem{Yanagida}
E.~Yanagida.
\newblock Stability of nonconstant steady states in reaction-diffusion systems
  on graphs.
\newblock {\em Japan Journal of Industrial and Applied Mathematics}.

\bibitem{RDbook}
Q.~Ye, Z.~Li, M.~Wang, and Y.~Wu.
\newblock {\em An Introduction to Reaction-Diffusion Equations, second
  edition}.
\newblock Science Press, 2011.

\end{thebibliography}

\end{document}